\documentclass[a4paper,11pt]{article}
\usepackage{amsfonts,amsmath,amssymb,amsthm,verbatim,placeins,xcolor,comment,placeins,caption,subcaption,bm,graphicx}
\usepackage[shortlabels]{enumitem}
\usepackage{geometry}
\geometry{width=16cm,height=24.3cm,centering}
\usepackage[pdfauthor={DG},pdfstartview={FitH},colorlinks=true,citecolor=blue,hyperindex,breaklinks]{hyperref} 
\usepackage[english]{babel}
\usepackage[sort&compress]{natbib}
\usepackage{pgfplots}

\newtheorem{theorem}{Theorem}[section]
\newtheorem{proposition}{Proposition}[section]
\newtheorem{assumption}{Assumption}
\newtheorem{lemma}{Lemma}[section]

\theoremstyle{remark}

\theoremstyle{definition}

\theoremstyle{remark}
\newtheoremstyle{myremark}{}{}{\color{blue}\small}{}{\color{blue}\bfseries}{}{ }{}

\theoremstyle{myremark}

\newcommand{\E}{\mathbb{E}} 
\renewcommand{\Re}{\operatorname{Re}} 
\newcommand{\R}{{\mathbb R}}

\newcommand{\Z}{{\mathbb Z}}

\newcommand\1{\mathbf{1}}

\DeclareMathOperator{\sign}{sign}
\DeclareMathOperator{\med}{med}




\begin{document}

\renewcommand*{\thefootnote}{\fnsymbol{footnote}}

\begin{center}
\Large{\textbf{Intermittency and infinite variance: the case of integrated supOU processes}}\\
\bigskip
\bigskip
Danijel Grahovac$^1$\footnote{dgrahova@mathos.hr}, Nikolai N.~Leonenko$^2$\footnote{LeonenkoN@cardiff.ac.uk}, Murad S.~Taqqu$^3$\footnote{murad@bu.edu}\\
\end{center}

\bigskip
\begin{flushleft}
\footnotesize{
$^1$ Department of Mathematics, University of Osijek, Trg Ljudevita Gaja 6, 31000 Osijek, Croatia\\
$^2$ School of Mathematics, Cardiff University, Senghennydd Road, Cardiff, Wales, UK, CF24 4AG}\\
$^3$ Department of Mathematics and Statistics, Boston University, Boston, MA 02215, USA
\end{flushleft}

\bigskip

\textbf{Abstract: } SupOU processes are superpositions of Ornstein-Uhlenbeck type processes with a random intensity parameter. They are stationary processes whose marginal distribution and dependence structure can be specified independently. Integrated supOU processes have then stationary increments and satisfy central and non-central limit theorems. Their moments, however, can display an unusual behavior known as ``intermittency''. We show here that intermittency can also appear when the processes have a heavy tailed marginal distribution and, in particular, an infinite variance.

\bigskip

\textbf{Keywords: } supOU processes, Ornstein-Uhlenbeck process, absolute moments, limit theorems, infinite variance

\bigskip

\textbf{MSC2010: } 60F05, 60G52, 60G10

\bigskip

\section{Introduction}\label{sec1}

Superpositions of Ornstein-Uhlenbeck type (supOU) processes provide models with analytically and stochastically tractable dependence structure displaying either weak or strong dependence and also having marginal distributions that are infinitely divisible. They have applications in environmental studies, ecology, meteorology, geophysics, biology, see \cite{barndorff2015recent,barndorff2018ambit,podolskij2015} and the references therein.  The supOU processes are particularly relevant in finance and the statistical theory of turbulence since they can model key stylized features of observational series from finance and turbulence (see e.g.~\cite{barndorff2018ambit,barndorff2004levy,barndorff2001non,barndorff2013multivariate,barndorff2013stochastic,barndorff2005burgers,stelzer2015derivative}). Recently in \cite{kelly2013active}, the supOU processes have even been used to assess the mass of black hole.

SupOU processes form a rich class of stationary processes with a flexible dependence structure. They are defined as integrals with respect to an infinitely divisible random measure (see Section \ref{s:pre}) and their distribution is determined by the \textit{characteristic quadruple}
\begin{equation}\label{quadruple}
(a,b,\mu,\pi),
\end{equation}
where $(a,b,\mu)$ is some L\'evy-Khintchine triplet (see e.g.~\cite{sato1999levy}) and $\pi$ is a probability measure on $\R_+$. In the construction of the supOU process $\{X(t), \ t \in \R\}$, the choice of $(a,b,\mu)$ uniquely characterizes the one-dimensional marginals. These do not depend on the choice of $\pi$. The probability distribution $\pi$ affects the dependence structure however. See Section \ref{s:pre} and \cite{bn2001,barndorff2018ambit,barndorff2011multivariate,barndorff2013multivariate,barndorff2013stochastic,GLST2017Arxiv} for details.

By aggregating the supOU process $\{X(t), \ t \in \R\}$ one obtains the \textit{integrated supOU process}
\begin{equation}\label{integratedsupOU}
X^*(t) = \int_0^t X(s) ds.
\end{equation}
A suitably normalized integrated process exhibits complex limiting behavior. Indeed, if the underlying supOU process has finite variance, then four classes of processes may arise in a classical limiting scheme (\cite{GLT2017Limit}). Namely, the limit process may be Brownian motion, fractional Brownian motion, a stable L\'evy process or a stable process with dependent increments. The type of limit depends on whether the Gaussian component is present in \eqref{quadruple} or not, on the behavior of $\pi$ in \eqref{quadruple} near the origin and on the growth of the L\'evy measure $\mu$ in \eqref{quadruple} near the origin (see \cite{GLT2017Limit} for details). In the infinite variance case, the limiting behavior is even more complex as the limit process may additionally depend on the regular variation index of the marginal distribution (see \cite{GLT2018LimitInfVar} for details). The limiting behavior of the integrated process has practical significance since supOU processes may be used as stochastic volatility models, see \cite{barndorff1997processes,barndorff2001non} and the references therein. In this setting the integrated process $X^*$ represents the integrated volatility (see e.g.~\cite{barndorff2013multivariate}). Moreover, the limiting behavior is important for statistical estimation (see \cite{nguyen2018bridging,stelzer2015moment}).

The integrated supOU process may exhibit another interesting limiting property related to behavior of their absolute moments in time. Although a suitably normalized integrated process satisfies a limit theorem, it may happen that its moments do not converge beyond some critical order. One way to investigate this behavior is to measure the rate of growth of moments by the \textit{scaling function}, defined for a generic process $Y=\{Y(t),\, t \geq 0\}$  as
\begin{equation}\label{deftauY}
\tau_Y(q) = \lim_{t\to \infty} \frac{\log \E |Y(t)|^q}{\log t},
\end{equation}
assuming the limit in \eqref{deftauY} exists and is finite. We will often focus on
\begin{equation*}
\frac{\tau_Y(q)}{q} = \lim_{t\to \infty} \frac{\log \left(\E |Y(t)|^q\right)^{1/q}}{\log t}
\end{equation*}
which has the advantage of involving $\left(\E |Y(t)|^q\right)^{1/q}$ which has the same units as $Y(t)$. The values $q$ are assumed to be in the range of finite moments $q \in (0,\overline{q}(Y))$, where
\begin{equation*}
\overline{q}(Y) = \sup \{ q >0 :\E|Y(t)|^q < \infty  \ \forall t\}.
\end{equation*}
To see how this is related to limit theorems, suppose that $Y$ satisfies a limit theorem in the form
\begin{equation*}
\left\{ \frac{Y(Tt)}{A_T} \right\} \overset{d}{\to} \left\{ Z(t) \right\},
\end{equation*}
with $A_T$ a sequence of constants and convergence in the sense of convergence of all finite-dimensional distributions as $T \to \infty$. By Lamperti's theorem (see, for example, \cite[Theorem 2.8.5]{pipiras2017long}), the limit $Z$ is $H$-self-similar for some $H>0$, that is, for any constant $c>0$, the finite-dimensional distributions of $Z(ct)$ are the same as those of $c^H Z(t)$. Moreover, the normalizing sequence is of the form $A_T=\ell(T) T^H$ for some $\ell$ slowly varying at infinity. For self-similar process, the moments evolve as a power function of time since $\E|Z(t)|^q=\E|Z(1)|^q t^{Hq}$ and therefore the scaling function of $Z$ is $\tau_Z(q)=Hq$. If for some $q>0$ we have
\begin{equation}\label{limitformmom}
\frac{\E| Y(Tt)|^q}{A_T^q} \to \E |Z(t)|^q, \quad \forall t \geq 0,
\end{equation}
then the scaling function of $Y$ would also be $\tau_Y(q)=Hq$ (see \cite[Theorem 1]{GLST2017Arxiv}), and the function
\begin{equation}\label{qeq}
 q \mapsto \frac{\tau_Y(q)}{q} = \frac{Hq}{q} = H
 \end{equation}
 would be constant over values of  $q$ for which \eqref{limitformmom} holds.
 
It was shown in \cite{GLST2017Arxiv} that the integrated supOU process $X^*$ may have the scaling function
\begin{equation}\label{tauX*}
\tau_{X^*}(q)=q-\alpha
\end{equation}
for a certain range of $q$. Thus its scaling function is different from that of a
 self-similar process.
 This situation happens, for example, for a non-Gaussian integrated supOU process with marginal distribution having exponentially decaying tails and probability measure $\pi$ in \eqref{quadruple} regularly varying at zero. 
 
  Note that the relation (\ref{tauX*}) implies that the function
\begin{equation*}
q \mapsto \frac{\tau_{X^*}(q)}{q} = \frac{q-\alpha}{q} = 1-\frac{\alpha}{q}
\end{equation*}
is not constant. It has points of strict increase, a property referred to as \textit{intermittency}.
 This term is used in all kind of different contexts. It refers in general to an unusual moment behavior and is used in various applications such as  turbulence, magnetohydrodynamics, rain and cloud studies, physics of fusion plasmas (see e.g.~\cite[Chapter 8]{frisch1995turbulence} or \cite{zel1987intermittency}).

Hence, intermittency implies that the usual convergence of moments \eqref{limitformmom} must not hold beyond some critical value of $q$. The papers \cite{GLST2017Arxiv,GLST2016JSP,GLT2017Limit} provide a complete picture on the behavior of moments in the case where $X(t)$ has {\em finite variance} .


We focus hereon the limiting behavior of moments and on the intermittency in the case where $X(t)$ has infinite variance and show that we can have intermittency even in this case. To establish the rate of growth of moments we make use of the limit theorems established in \cite{GLT2018LimitInfVar}. The type of the limiting process depends heavily on the structure of the underlying supOU process. Hence, the form of the scaling function of the integrated process will depend on the several parameters related to the quadruple \eqref{quadruple}. Special care is needed since the range of finite moments is limited. We show that the scaling function may look like a broken line indicating that there is a change-point in the rate of growth of moments. Hence, infinite variance integrated supOU processes may also exhibit the phenomenon of intermittency. Our results also indicate that in some cases, if we decompose the process into several components, the intermittency of the finite variance component may remain hidden by the infinite moments of the infinite variance component. We conclude that moments may have limited capability in identifying unusual limiting behavior.

The paper is organized as follows. In Section \ref{s:pre} we introduce notation and assumptions. Section \ref{s:main} contains the main results and all the proofs are given in Section \ref{s:proofs}.

\section{Preliminaries and assumptions}\label{s:pre}
We shall use the notation
\begin{equation*}
\kappa_Y(\zeta)=C\left\{ \zeta \ddagger Y\right\} = \log \E e^{i \zeta Y}, \quad \zeta \in \R,
\end{equation*}
to denote the cumulant (generating) function of a random variable $Y$. For a stochastic process $Y=\{Y(t)\}$ we write $\kappa_Y(\zeta,t) = \kappa_{Y(t)}(\zeta)$, and by suppressing $t$ we mean $\kappa_Y(\zeta)=\kappa_Y(\zeta,1)$, that is the cumulant function of the random variable $Y(1)$.

\subsection{SupOU processes}
The class of supOU processes has been introduced by Barndorff-Nielsen in \cite{bn2001} as follows. Let $m$ be the product $m=\pi \times Leb$ of a probability measure $\pi$ on $\R_+$ and the Lebesgue measure on $\R$. A homogeneous infinitely divisible random measure (\textit{L\'evy basis}) on $\R_+ \times \R$ with \textit{control measure} $m$ is a random measure such that the cumulant function of the random variable $\Lambda(A)$, where $A \in \mathcal{B} \left(\R_+ \times \R\right)$ has finite measure, equals
\begin{equation*}
C\left\{ \zeta \ddagger \Lambda(A)\right\} =  m(A) \kappa_{L}(\zeta) = \left( \pi \times Leb \right) (A) \kappa_{L}(\zeta).
\end{equation*}
Here $\kappa_{L}$ is the cumulant function $\kappa_{L} (\zeta)= \log \E e^{i \zeta L(1) }$ of some infinitely divisible random variable $L(1)$ with L\'evy-Khintchine triplet $(a,b,\mu)$ i.e.
\begin{equation}\label{kappacumfun}
\kappa_{L}(\zeta) = i\zeta a -\frac{\zeta ^{2}}{2} b  +\int_{\R}\left( e^{i\zeta x}-1-i\zeta x \mathbf{1}_{[-1,1]}(x)\right) \mu(dx).
\end{equation}
The L\'evy process $L=\{L(t), \, t\geq 0\}$ associated with the triplet $(a,b,\mu)$ is called the \textit{background driving L\'{e}vy process} (see \cite{barndorff2001non}). It has independent stationary increments and thus, its finite-dimensional distributions depend only on the distribution of $L(1)$.

The \textit{supOU} process is a strictly stationary process $X=\{X(t), \, t\in \R\}$ given by the stochastic integral (\cite{bn2001})
\begin{equation}\label{supOU}
X(t)= \int_{\xi=0}^\infty \int_{s=-\infty}^\infty e^{-\xi t + s} \mathbf{1}_{[0,\infty)}(\xi t -s) \Lambda(d\xi,ds).
\end{equation}
By appropriately choosing the infinitely divisible distribution $L(1)$, one can obtain any self-decomposable distribution as a marginal distribution of $X$. Note that the one-dimensional marginals of the supOU process are independent on the choice of $\pi$. The probability measure $\pi$ ``randomizes'' the rate parameter $\xi$ in \eqref{supOU} and the Lebesgue measure $ds$ is associated with the moving average variable $s$. The quadruple $(a,b,\mu,\pi)$ given in \eqref{quadruple} determines the law of the supOU process $\{X(t), \, t\in \R\}$. More details about supOU processes can be found in \cite{bn2001,barndorff2018ambit,barndorff2005spectral,barndorff2013levy,barndorff2011multivariate,GLST2017Arxiv}.

We will consider below supOU processes with marginal distributions in the domain of attraction of stable law. Recall that a stable distribution $\mathcal{S}_\gamma (\sigma, \rho, c)$ with parameters $0<\gamma<2$, $\sigma>0$, $-1\leq \rho \leq 1$ and $c\in \R$, has a cumulant function of the form:
\begin{equation}\label{cum:stable}
\kappa_{\mathcal{S}_\gamma (\sigma, \rho, c)}(\zeta) := C \{ \zeta \ddagger Z \} = i c \zeta - \sigma^{\gamma} |\zeta|^\gamma  \left( 1- i \rho \sign (\zeta) \chi (\zeta, \gamma) \right), \quad \zeta \in \R,
\end{equation}
where
\begin{equation*}
\chi (\zeta, \gamma) = \begin{cases}
\tan \left(\frac{\pi \gamma}{2}\right), & \gamma\neq 1,\\
\frac{\pi}{2} \log |\zeta|, & \gamma = 1.
\end{cases}
\end{equation*}
When $\gamma\neq 1$, then $\mathcal{S}_\gamma (\sigma, \rho, c)$ is strictly stable if and only if $c=0$. For $\gamma=1$, $\mathcal{S}_1 (\sigma, \rho, c)$ is strictly stable if and only if $\rho=0$.

\subsection{Basic assumptions}
We now state a set of assumptions for the class of supOU processes we consider. 

\begin{assumption}\label{assum}
The supOU process $\{X(t), \, t\in \R\}$ is such that the following holds:
\begin{enumerate}[(i)]
\item The marginal distribution satisfies 
\begin{equation}\label{regvarofX}
P(X(1)>x) \sim p k(x) x^{-\gamma} \quad \text{and} \quad P(X(1)\leq - x) \sim q k(x) x^{-\gamma}, \quad  \text{ as } x\to \infty,
\end{equation}
for some $p,q \geq 0$, $p+q>0$, $0<\gamma<2$ and some slowly varying function $k$ If $\gamma=1$, we assume $p=q$. When the mean is finite, we assume $\E X(1)=0$. 
\item $\pi$ has a density $p$ satisfying 
\begin{equation}\label{regvarofp}
p (x) \sim \alpha \ell(x^{-1}) x^{\alpha-1}, \quad \text{ as } x \to 0.
\end{equation}
for some $\alpha>0$ and some slowly varying function $\ell$ and
\begin{equation}\label{pifinitemean}
\int_0^\infty \xi \pi(d\xi)<\infty.
\end{equation}
\item The behavior at the origin of the L\'evy measure $\mu$ is given by
\begin{equation}\label{LevyMCond}
\mu \left( [x, \infty) \right) \sim c^+ x^{-\beta} \ \text{ and } \ \mu \left( (-\infty, -x] \right) \sim c^- x^{-\beta} \ \text{  as } x \to 0,
\end{equation}
for some $0\leq \beta<2$, $\beta\neq1+\alpha$, $c^+, c^- \geq 0$, $c^++c^->0$.
\end{enumerate}
\end{assumption}

Assumption \ref{assum}(i) implies that the marginal distribution is in the domain of attraction of an infinite variance stable law $\mathcal{S}_\gamma (\sigma, \rho, 0)$ with (see \cite[Theorem 2.6.1]{ibragimov1971independent})
\begin{equation}\label{sigmaandrho}
\sigma = \left( \frac{\Gamma(2-\gamma)}{1-\gamma} (p+q)  \cos \left(\frac{\pi \gamma}{2}\right) \right)^{1/\gamma}, \qquad \rho = \frac{p-q}{p+q}.
\end{equation}
Note that this is a strictly stable law since $\rho=0$ if $\gamma=1$. By \cite[Propositon 3.1]{fasen2007extremes}, the tail of the distribution function of $X(1)$ is asymptotically equivalent to the tail of the background driving L\'evy process $L(t)$ at $t=1$. More precisely, as $ x\to \infty$
\begin{equation}\label{equivalence of tails}
P(L(1)>x) \sim \gamma P(X(1)>x) \ \text{ and } \ P(L(1)\leq - x) \sim \gamma P(X(1)\leq -x).
\end{equation}
Hence, \eqref{regvarofX} implies
\begin{equation}\label{regvarofL}
P(L(1)>x) \sim p \gamma k(x) x^{-\gamma} \ \text{ and } \ P(L(1)\leq - x) \sim q \gamma k(x) x^{-\gamma}, \quad  \text{as } x\to \infty,
\end{equation}
and $L(1)$ is in the domain of attraction of stable distribution  $\mathcal{S}_\gamma (\gamma^{1/\gamma} \sigma, \rho, 0)$.

The next assumption, Assumption \ref{assum}(ii), concerns the dependence structure controlled by the behavior near the origin of the probability measure $\pi$ in the characteristic quadruple \eqref{quadruple}. In the finite variance case, $\pi$ is directly related to the correlation function of the supOU process $X$:
\begin{equation*}
r(t)=\int_{\R_+} e^{-t \xi }\pi (d\xi ), \quad t \geq 0.
\end{equation*}
Hence, by a Tauberian argument, the decay of the correlation function at infinity is related to the decay of the distribution function of $\pi$ at zero (see \cite[Proposition 2.6]{fasen2007extremes}). We assume $\pi$ has a density for simplicity. Note that if the variance of the supOU process is finite and $\alpha \in (0,1)$, then the correlation function is not integrable, and the finite variance supOU process may be said to exhibit long-range dependence. On the other hand, note that the tail distribution of $\pi$ does not affect the tail behavior of $r(t)$, and in particular the decay of correlations. Hence it is not very restrictive to assume that \eqref{pifinitemean} holds.

In Assumption \ref{assum}(iii), the L\'evy measure $\mu$ is assumed to have a power law behavior near the origin which will give rise to another parameter affecting the limiting behavior. We have excluded a boundary cases $\gamma=1+\alpha$ to simplify the presentation of the results. If \eqref{LevyMCond} holds, then $\beta$ is the Blumenthal-Getoor index of the L\'evy measure $\mu$ defined by (see \cite{GLT2017Limit})
\begin{equation*}
\beta_{BG} = \inf \left\{\gamma \geq 0 : \int_{|x|\leq 1} |x|^\gamma \mu(dx) < \infty \right\}.
\end{equation*}
Note that by \cite[Lemma 7.15]{kyprianou2014fluctuations}, $\mu \left( [x, \infty) \right) \sim P(L(1)>x)$ and $\mu \left( (-\infty, -x] \right) \sim P(L(1)\leq -x)$ as $x \to \infty$, hence we can express \eqref{regvarofL} equivalently as
\begin{equation}\label{e:alternativeDOAcondition}
\mu \left( [x, \infty) \right) \sim p \gamma k(x) x^{-\gamma} \ \text{ and } \ \mu \left( (-\infty, -x] \right) \sim q \gamma k(x) x^{-\gamma}, \quad  \text{ as } x\to \infty.
\end{equation}
Hence, all the assumptions can be stated in terms of the characteristic quadruple \eqref{quadruple}. The condition \eqref{LevyMCond} may be equivalently stated in terms of the L\'evy measure of $X(1)$. Indeed, if $\nu$ is the L\'evy measure of $X(1)$, then \eqref{LevyMCond} is equivalent to (see \cite{GLT2017Limit} for details)
\begin{equation*}
\nu \left( [x, \infty) \right) \sim \beta^{-1} c^+ x^{-\beta} \ \text{ and } \ \nu \left( (-\infty, -x] \right) \sim \beta^{-1}  c^- x^{-\beta} \ \text{  as } x \to 0.
\end{equation*}

\section{Main results}\label{s:main}
As stated in the introduction, we are interested in establishing the rate of growth of moments of the integrated process \eqref{integratedsupOU}, measured by the scaling function $\tau_{X^*}$ defined by \eqref{deftauY}. We particularly focus on whether the scaling function exhibits non-linearities. The situation is more delicate than in the finite variance case since the range of finite moments is limited and the scaling function of the integrated process $X^*$ is well-defined only over the interval $(0,\overline{q}(X^*))=(0,\gamma)$.

We will show that infinite variance supOU processes may exhibit the phenomenon of intermittency. We first consider the case when the underlying supOU process has no Gaussian component ($b=0$). The obtained scaling functions for this case are shown in Figures \ref{figalla}-\ref{figalld}.

\begin{theorem}\label{thm:mainb=0}
Suppose that Assumption \ref{assum} holds and $b=0$. Then the scaling function $\tau_{X^*}(q)$ of the process $X^*$ is as follows:
\begin{enumerate}[(a)]
	\item If $\alpha>1$ or if $\alpha \in (0,1)$ and $\gamma<1+\alpha$, then
	\begin{equation*}
	\tau_{X^*}(q)=\frac{1}{\gamma} q, \quad 0<q<\gamma.
	\end{equation*}
	\item If $\beta<1+\alpha<\gamma$, then
	\begin{equation*}
	\tau_{X^*}(q) = \begin{cases}
	\frac{1}{1+\alpha} q, & 0<q\leq 1+\alpha,\\
	q-\alpha, & 1+\alpha \leq q < \gamma.
	\end{cases}
	\end{equation*}
	\item If $1+\alpha<\beta\leq\gamma$, then
	\begin{equation*}
	\tau_{X^*}(q) = \begin{cases}
	\left(1-\frac{\alpha}{\beta} \right) q, & 0<q\leq \beta,\\
	q-\alpha, & \beta \leq q < \gamma.
	\end{cases}
	\end{equation*}
	\item If $1+\alpha<\gamma<\beta$, then
	\begin{equation*}
	\tau_{X^*}(q)=\left(1-\frac{\alpha}{\beta} \right) q, \quad 0<q<\gamma.
	\end{equation*}
\end{enumerate}
\end{theorem}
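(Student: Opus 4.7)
The plan is to combine the functional limit theorems from \cite{GLT2018LimitInfVar} with direct moment estimates on $X^*$. In each of the cases (a)--(d), \cite{GLT2018LimitInfVar} identifies an $H$-self-similar limit $Z$ of $X^*(T \cdot)/A_T$, with $A_T$ regularly varying of index $H$; the candidate scaling function is then $\tau_Z(q) = Hq$. Explicitly, $H = 1/\gamma$ in case (a), $H = 1/(1+\alpha)$ in case (b), and $H = 1 - \alpha/\beta$ in cases (c)--(d).

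For the range of $q$ below the critical threshold, where $\tau_{X^*}(q) = Hq$ is claimed, I would upgrade distributional convergence to convergence of the $q$-th moment by establishing uniform integrability of $|X^*(T)/A_T|^q$. This uses truncation of the L\'evy measure $\mu$ at small and large scales, Marcinkiewicz--Zygmund or Rosenthal-type inequalities applied to the stochastic integral $X^*(t) = \int f_t \, d\Lambda$, and Potter bounds on the slowly varying functions $k$ and $\ell$. Once convergence of moments holds, the self-similarity identity $\E|Z(t)|^q = \E|Z(1)|^q \, t^{Hq}$ combined with \eqref{limitformmom} gives $\tau_{X^*}(q) = Hq$.

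For the intermittent regime $\tau_{X^*}(q) = q - \alpha$ arising in cases (b) and (c) beyond the critical threshold, I would work directly with the integral representation $X^*(t) = \int f_t(\xi, s) \, \Lambda(d\xi, ds)$, where
\[
f_t(\xi, s) = \begin{cases} e^s \xi^{-1}(1 - e^{-\xi t}), & s \leq 0, \\ \xi^{-1}(1 - e^{s - \xi t}), & 0 < s < \xi t, \\ 0, & \text{otherwise,} \end{cases}
\]
and with the cumulant formula $\kappa_{X^*}(\zeta, t) = \int\!\int \kappa_L(\zeta f_t(\xi, s)) \, ds \, \pi(d\xi)$. The key scaling observation is that $f_t \approx \xi^{-1}$ on a region of Lebesgue $s$-measure of order $\xi t$, so that integrating $|f_t|^q$ against $\pi(d\xi) \sim \alpha \ell(\xi^{-1}) \xi^{\alpha - 1} d\xi$ and cutting off the divergence at $\xi \sim t^{-1}$ (below which $f_t \approx t$ instead of $\xi^{-1}$) produces a contribution of order $t^{q - \alpha}$ from both regions $\xi \lesssim t^{-1}$ and $\xi \gtrsim t^{-1}$. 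The upper bound on $\E|X^*(t)|^q$ follows by combining the estimate $|\kappa_L(\zeta)| \leq C|\zeta|^\beta$ near the origin (a consequence of Assumption \ref{assum}(iii)) with the regularly varying tail $P(|L(1)| > x) \sim C x^{-\gamma}$; the lower bound is obtained by isolating the contribution of a single large jump of $\Lambda$ located in the region where $f_t \approx \xi^{-1}$.

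The main obstacle is to locate the transition point precisely and to verify that the two linear pieces of $\tau_{X^*}$ meet continuously there. The Blumenthal--Getoor index $\beta$ controls the small-jump asymptotics: $\beta < 1 + \alpha$ in case (b) gives the self-similar rate $q/(1+\alpha)$ with change-point at $q = 1 + \alpha$, while $\beta > 1 + \alpha$ in cases (c)--(d) gives $(1 - \alpha/\beta) q$ with change-point at $q = \beta$. The case distinction itself is delicate since it depends on the interplay of $\alpha$, $\beta$ and $\gamma$ and the competition between the dependence-driven rate and the heavy-tail-driven rate. Case (d) is special in that $\beta > \gamma$ pushes the change-point outside the admissible range $(0, \gamma)$ of finite moments of $X^*$, so the broken-line phenomenon does not appear there.
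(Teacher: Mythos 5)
Your overall toolbox (limit theorems from \cite{GLT2018LimitInfVar} plus uniform integrability below the critical order, direct moment estimates above it) matches the paper's, but your route is organizationally different: you work with $X^*$ as a whole, whereas the paper first splits $\Lambda$ into independent pieces and analyses the heavy-tailed compound-Poisson component $X_1^*$ (quadruple $(a,0,\mu\1_{\{|x|>1\}},\pi)$) and the finite-variance small-jump component $X_2^*$ (quadruple $(0,0,\mu\1_{\{|x|\leq 1\}},\pi)$) separately, then reassembles the answer via the max-rule for sums of independent processes (Proposition \ref{prop:sfofsum}). This decomposition is not cosmetic: it lets the authors import the \emph{exact} scaling $\tau_{X_2^*}(q)=q-\alpha$ above the change-point from the finite-variance theory of \cite{GLT2017Limit} (Lemma \ref{lemma:X2}), so that in the supercritical regime the lower bound for $X^*$ comes entirely from $X_2^*$, and for the heavy-tailed part only the one-sided estimate $\tau_{X_1^*}(q)\leq q-\alpha$ is ever needed (Lemma \ref{lemma:X1:2}; the equality there is explicitly left as a conjecture). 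Your scaling heuristic for $\int\!\int |f_t|^q\,ds\,\pi(d\xi)\asymp t^{q-\alpha}$ is the correct mechanism, and your identification of the self-similarity exponents $H$ in each case agrees with the paper.

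The genuine gap is the matching lower bound $\E|X^*(t)|^q\gtrsim t^{q-\alpha}$ for $q$ above the change-point, which you dispatch with ``isolating the contribution of a single large jump.'' For a fixed infinitely divisible law one can indeed bound $P(|Y|>u)$ from below by its L\'evy measure tail for $u$ large, but here the law of $X^*(t)$ changes with $t$ and the relevant scale is $u\sim t^{(q-\alpha)/q}$, so the one-jump bound must be made uniform in $t$ at a moving threshold; this is exactly the step the authors could not carry out directly for the heavy-tailed component and why they reroute the lower bound through $X_2^*$ and Proposition \ref{prop:sfofsum} (whose proof for $q<1$ itself needs a symmetrization argument and the convexity of $\tau$ to handle the case of equal component exponents). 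Two smaller points: Rosenthal-type inequalities are not the right tool here since all moments of order $\geq 2$, and often of order $\geq\gamma$, are infinite --- the paper instead uses the von Bahr--Esseen representation \eqref{e:thm:vonbahr2} together with cumulant bounds of the form $|\kappa(\zeta)|\leq C|\zeta|^{q+\delta}$ with $q+\delta<\gamma$, plus a median correction for $q<1$; and your worry about the two linear pieces meeting continuously is resolved for free by the convexity of the scaling function, which the paper invokes at the boundary points $q=1+\alpha$ and $q=\beta$.
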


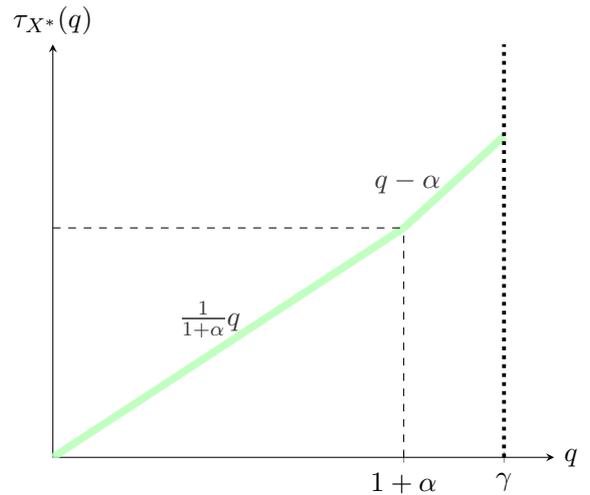
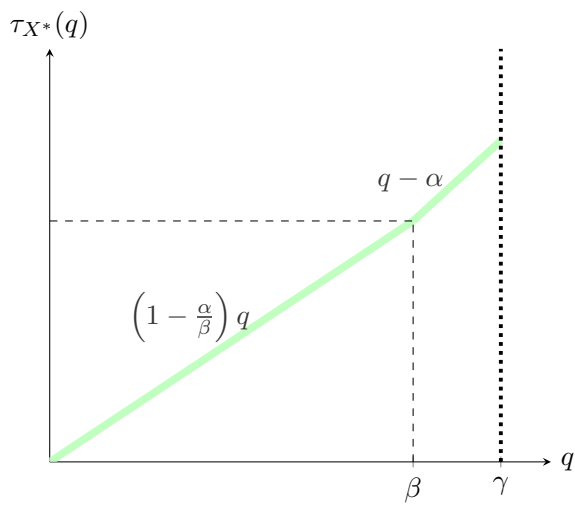
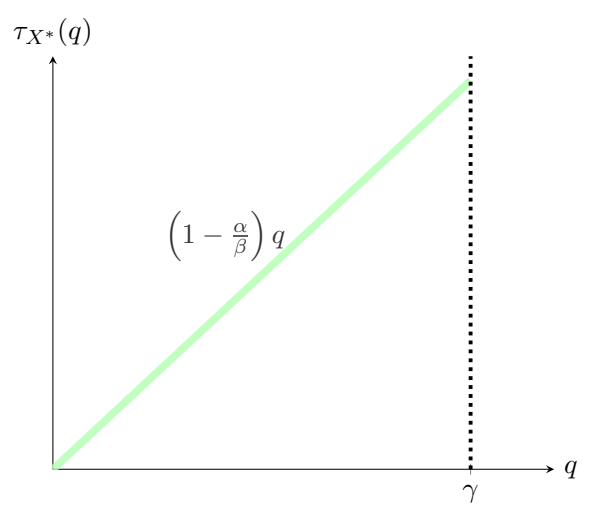
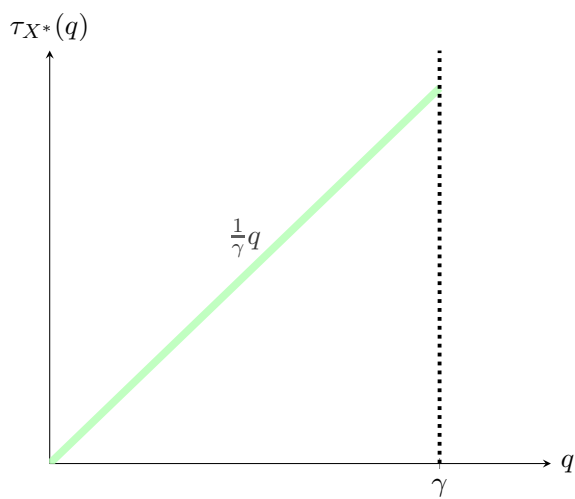
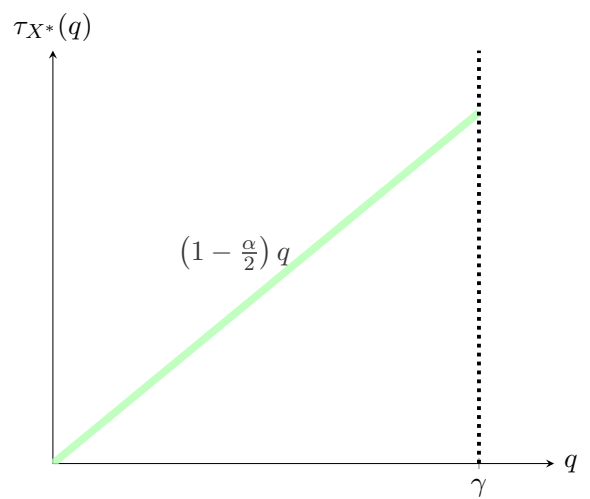
\begin{figure}
\centering
\begin{subfigure}[b]{0.45\textwidth}
\resizebox{1.1\textwidth}{!}{
\begin{tikzpicture}[domain=0:2]
\begin{axis}[
axis lines=middle,
xlabel=$q$, xlabel style={at=(current axis.right of origin), anchor=west},
ylabel=$\tau_{X^*}(q)$, ylabel style={at=(current axis.above origin), anchor=south},
xtick={0,1.5},
xticklabels={$0$,$\gamma$},
xmin=0,
xmax=1.8,
ymajorticks=false
]
\addplot[line width=3pt,opacity=0.8,white!70!green,domain=0:1.5]{x/1.5} node [pos=0.6,left,black]{$\frac{1}{\gamma}q\ $};
\addplot[dotted,line width=1.5pt] coordinates {(1.5,0) (1.5,1.05)};
\end{axis}
\end{tikzpicture}
}
\caption{Theorem \ref{thm:mainb=0}(a)}
\label{figalla}
\end{subfigure}
\hfill
\begin{subfigure}[b]{0.45\textwidth}
\resizebox{1.1\textwidth}{!}{
\begin{tikzpicture}
\begin{axis}[
axis lines=middle,
xlabel=$q$, xlabel style={at=(current axis.right of origin), anchor=west},
ylabel=$\tau_{X^*}(q)$, ylabel style={at=(current axis.above origin), anchor=south},
xtick={0,1.4,1.8},
xticklabels={$0$,$1+\alpha$,$\gamma$},
xmin=0,
xmax=2,
ymajorticks=false
]
\addplot[line width=3pt,opacity=0.8,white!70!green,domain=0:1.4]{(1/1.4)*x} node [pos=0.6,left, black]{$\frac{1}{1+\alpha}q\ $};
\addplot[line width=3pt,opacity=0.8,white!70!green,domain=1.4:1.8]{x-0.4} node [pos=0.5,left, black]{$q-\alpha$};
\addplot[dashed] coordinates {(1.4,0) (1.4,1)};
\addplot[dashed] coordinates {(0,1) (1.4,1)};
\addplot[dotted,line width=1.5pt] coordinates {(1.8,0) (1.8,1.8)};
\end{axis}
\end{tikzpicture}
}
\caption{Theorem \ref{thm:mainb=0}(b)}
\label{figallb}
\end{subfigure}
\begin{subfigure}[b]{0.45\textwidth}
\resizebox{1.1\textwidth}{!}{
\begin{tikzpicture}
\begin{axis}[
axis lines=middle,
xlabel=$q$, xlabel style={at=(current axis.right of origin), anchor=west},
ylabel=$\tau_{X^*}(q)$, ylabel style={at=(current axis.above origin), anchor=south},
xtick={0,1.45,1.8},
xticklabels={$0$,$\beta$,$\gamma$},
xmin=0,
xmax=2,
ymajorticks=false
]
\addplot[line width=3pt,opacity=0.8,white!70!green,domain=0:1.45]{(1-0.4/1.45)*x} node [pos=0.6,left,black]{$\left(1-\frac{\alpha}{\beta}\right)q\, $};
\addplot[line width=3pt,opacity=0.8,white!70!green,domain=1.45:1.8]{x-0.4} node [pos=0.5,left,black]{$q-\alpha$};
\addplot[dashed] coordinates {(1.45,0) (1.45,1.05)};
\addplot[dashed] coordinates {(0,1.05) (1.45,1.05)};
\addplot[dotted,line width=1.5pt] coordinates {(1.8,0) (1.8,1.8)};
\end{axis}
\end{tikzpicture}
}
\caption{Theorem \ref{thm:mainb=0}(c)}
\label{figallc}
\end{subfigure}
\hfill
\begin{subfigure}[b]{0.45\textwidth}
\resizebox{1.1\textwidth}{!}{
\begin{tikzpicture}
\begin{axis}[
axis lines=middle,
xlabel=$q$, xlabel style={at=(current axis.right of origin), anchor=west},
ylabel=$\tau_{X^*}(q)$, ylabel style={at=(current axis.above origin), anchor=south},
xtick={0,1.5},
xticklabels={$0$,$\gamma$},
xmin=0,
xmax=1.8,
ymajorticks=false
]
\addplot[line width=3pt,opacity=0.8,white!70!green,domain=0:1.5]{(1-0.1/1.7)*x} node [pos=0.6,left,black]{$\left(1-\frac{\alpha}{\beta}\right)q\, $};
\addplot[dotted,line width=1.5pt] coordinates {(1.5,0) (1.5,1.5)};
\end{axis}
\end{tikzpicture}
}
\caption{Theorem \ref{thm:mainb=0}(d)}
\label{figalld}
\end{subfigure}
\begin{subfigure}[b]{0.45\textwidth}
\resizebox{1.1\textwidth}{!}{
\begin{tikzpicture}[domain=0:2]
\begin{axis}[
axis lines=middle,
xlabel=$q$, xlabel style={at=(current axis.right of origin), anchor=west},
ylabel=$\tau_{X^*}(q)$, ylabel style={at=(current axis.above origin), anchor=south},
xtick={0,1.4},
xticklabels={$0$,$\gamma$},
xmin=0,
xmax=1.8,
ymajorticks=false
]
\addplot[line width=3pt,opacity=0.8,white!70!green,domain=0:1.4]{x/1.4} node [pos=0.6,left,black]{$\frac{1}{\gamma}q\ $};
\addplot[dotted,line width=1.5pt] coordinates {(1.4,0) (1.4,1.1)};
\end{axis}
\end{tikzpicture}
}
\caption{Theorem \ref{thm:mainb!=0}(a)}
\label{figalle}
\end{subfigure}
\hfill
\begin{subfigure}[b]{0.45\textwidth}
\resizebox{1.1\textwidth}{!}{
\begin{tikzpicture}
\begin{axis}[
axis lines=middle,
xlabel=$q$, xlabel style={at=(current axis.right of origin), anchor=west},
ylabel=$\tau_{X^*}(q)$, ylabel style={at=(current axis.above origin), anchor=south},
xtick={0,1.7},
xticklabels={$0$,$\gamma$},
xmin=0,
xmax=2,
ymajorticks=false
]
\addplot[line width=3pt,opacity=0.8,white!70!green,domain=0:1.7]{(1-0.5/2)*x} node [pos=0.6,left,black]{$\left(1-\frac{\alpha}{2}\right)q\, $};
\addplot[dotted,line width=1.5pt] coordinates {(1.7,0) (1.7,1.5)};
\end{axis}
\end{tikzpicture}
}
\caption{Theorem \ref{thm:mainb!=0}(b)}
\label{figallf}
\end{subfigure}
\caption{The scaling functions obtained in Theorems \ref{thm:mainb=0} ($b=0$) and \ref{thm:mainb!=0} ($b\neq 0$). There is intermittency in the cases (b) and (c).}\label{figall}
\end{figure}

Note that the scaling function has a change-point in only two of the cases of Theorem \ref{thm:mainb=0}. Hence intermittency appears only in cases (b) and (c) of Theorem \ref{thm:mainb=0} shown in Figures \ref{figallb} and \ref{figallc}, respectively. One can notice that infinite order moments may hide the intermittency property as they limit the domain of the scaling function. 

The proof of Theorem \ref{thm:mainb=0} is given in Subsection \ref{s:sfX*}. It is based on the decomposition of the integrated process $X^*$ into independent components $X^*_1$, $X^*_2$ and $X_3^*$ that correspond to characteristic quadruples $(a,0,\mu(dx) \1_{\{|x|>1\}},\pi)$, $(0,0,\mu(dx) \1_{\{|x|\leq 1\}},\pi)$ and $(0,b,0,\pi)$, respectively. In Section \ref{s:proofs} we derive the scaling functions of $X^*_1$, $X^*_2$ and $X_3^*$ and then combine these to get the scaling function of the integrated process $X^*$. This is illustrated in Figure \ref{fig4} in Subsection \ref{s:sfX*}. 

The finite variance component $X_2^*$ exhibits intermittency in all cases, however, this is not always apparent from the scaling function of the process $X^*$. In these cases, the change point in the scaling function of $X_2^*$ is to the right of the moment index $\gamma$ and the scaling function of $X^*$ remains linear on $(0,\gamma)$ (see Figures \ref{fig4a}, \ref{fig4b}, \ref{fig4c} and \ref{fig4f} in Subsection \ref{s:sfX*}). Hence, infinite order moments may hide the behavior of the intermittent component.

We next state the result for the supOU process with Gaussian component ($b\neq 0$). The scaling functions for this case are shown in Figures \ref{figalle}-\ref{figallf}.

\begin{theorem}\label{thm:mainb!=0}
Suppose that Assumption \ref{assum} holds and $b\neq 0$. Then the scaling function $\tau_{X^*}(q)$ of the process $X^*$ is as follows:
\begin{enumerate}[(a)]
	\item If $\alpha>1$ or if $\alpha \in (0,1)$ and $\gamma <\frac{2}{2-\alpha}$, then
	\begin{equation*}
	\tau_{X^*}(q)=\frac{1}{\gamma} q, \quad 0<q<\gamma.
	\end{equation*}
	\item If $\alpha \in (0,1)$ and $\gamma > \frac{2}{2-\alpha}$, then
	\begin{equation*}
	\tau_{X^*}(q)=\left(1-\frac{\alpha}{2} \right) q, \quad 0<q<\gamma.
	\end{equation*}
\end{enumerate}
\end{theorem}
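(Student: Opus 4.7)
The plan is to extend the decomposition argument used in the proof of Theorem \ref{thm:mainb=0} by explicitly analyzing the Gaussian component introduced by $b\neq 0$. Write $X^* = X_1^* + X_2^* + X_3^*$, where the three components arise from the independent L\'evy bases associated with the quadruples $(a,0,\mu(dx)\mathbf{1}_{\{|x|>1\}},\pi)$, $(0,0,\mu(dx)\mathbf{1}_{\{|x|\leq 1\}},\pi)$ and $(0,b,0,\pi)$. For independent summands with finite $q$-th moment the triangle inequality (for $q\geq 1$) and the $c_r$-inequality (for $q<1$) give $\E|X^*(t)|^q \leq C_q \sum_{i=1}^3 \E|X_i^*(t)|^q$, while the matching lower bound of the order of the largest summand is obtained by conditioning arguments for $q\geq 1$ and by direct tail comparison for $q<1$. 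Thus $\tau_{X^*}(q) = \max_i \tau_{X_i^*}(q)$ on $(0,\gamma)$.

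Since $X_1^*$ and $X_2^*$ do not depend on $b$, their scaling functions are those extracted in the proof of Theorem \ref{thm:mainb=0}: $\tau_{X_1^*}(q) = q/\gamma$ on $(0,\gamma)$, while $\tau_{X_2^*}(q)$ is bounded above on the relevant range by $q/(1+\alpha)$ in the regime $\beta<1+\alpha$ and by $(1-\alpha/\beta)q$ in the regime $\beta>1+\alpha$ (the ``$q-\alpha$'' branch plays no role once $X_3^*$ is added, as verified below).

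The new ingredient is the scaling function of the integrated Gaussian supOU component $X_3^*$. Since $X_3$ is a centered stationary Gaussian process, $X_3^*(t)$ is centered Gaussian and $\E|X_3^*(t)|^q = c_q (\Var X_3^*(t))^{q/2}$. A direct computation from \eqref{supOU} restricted to the Gaussian part of the L\'evy basis gives
\[
\Cov(X_3(0),X_3(s)) = \frac{b}{2} \int_0^\infty e^{-s\xi}\pi(d\xi), \qquad s\geq 0,
\]
and Karamata's Tauberian theorem applied to assumption \eqref{regvarofp} yields $\int_0^\infty e^{-s\xi}\pi(d\xi) \sim \Gamma(1+\alpha)\ell(s) s^{-\alpha}$ as $s\to\infty$. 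Using $\Var X_3^*(t) = 2\int_0^t (t-u) \Cov(X_3(0),X_3(u)) du$ one obtains
\[
\Var X_3^*(t) \sim
\begin{cases}
C_1 \ell(t)\, t^{2-\alpha}, & \alpha \in (0,1), \\
C_2\, t, & \alpha > 1,
\end{cases}
\]
so $\tau_{X_3^*}(q) = (1-\alpha/2)q$ when $\alpha\in(0,1)$ and $\tau_{X_3^*}(q) = q/2$ when $\alpha>1$.

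The conclusion follows by comparing slopes on $(0,\gamma)$. In case (a), either $\alpha>1$ and $1/\gamma>1/2$ (since $\gamma<2$), or $\alpha\in(0,1)$ with $1/\gamma > 1-\alpha/2$ (equivalent to $\gamma<2/(2-\alpha)$); in both sub-cases $X_1^*$ dominates and $\tau_{X^*}(q)=q/\gamma$. In case (b) the inequality $1-\alpha/2>1/\gamma$ holds, and the elementary bounds $1/(1+\alpha)<1-\alpha/2$ and $1-\alpha/\beta\leq 1-\alpha/2$ (since $\beta\leq 2$ and $\alpha\in(0,1)$) give $\tau_{X_2^*}(q)\leq (1-\alpha/2)q$, so $X_3^*$ dominates and $\tau_{X^*}(q) = (1-\alpha/2)q$. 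The main obstacle I anticipate is the rigorous matching lower bound $\tau_{X^*}(q)\geq \max_i \tau_{X_i^*}(q)$ uniformly on $(0,\gamma)$, which for $q<1$ is not a direct convexity consequence and likely requires tail or characteristic-function estimates of the same type used in the proof of Theorem \ref{thm:mainb=0}.
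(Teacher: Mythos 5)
Your proposal follows essentially the same route as the paper: decompose $X^*=X_1^*+X_2^*+X_3^*$, establish $\tau_{X^*}(q)=\max_i\tau_{X_i^*}(q)$, and compare slopes. Your first-principles derivation of $\tau_{X_3^*}$ via the covariance $\frac{b}{2}\int_0^\infty e^{-s\xi}\pi(d\xi)$ and Karamata's theorem is correct and matches Lemma \ref{lemma:X3}, which the paper simply imports from \cite{GLT2017Limit}. The slope comparisons in cases (a) and (b) are also the ones the paper uses.

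Two points deserve attention. First, the obstacle you flag at the end --- the lower bound $\tau_{X^*}(q)\geq\max_i\tau_{X_i^*}(q)$ for $q<1$ --- is precisely what Proposition \ref{prop:sfofsum} settles, not by tail estimates but by a symmetrization trick: introducing an independent copy $Y_2'$ of $Y_2$, using $Y_1+Y_2-Y_2'\overset{d}{=}Y_1-Y_2+Y_2'$ together with the $c_q$-inequality to get $\E|Y_1+Y_2|^q\geq\E|Y_1|^q\bigl(2^{q-1}-\E|Y_2|^q/\E|Y_1|^q\bigr)$, which suffices provided $\tau_{Y_1}(q)\neq\tau_{Y_2}(q)$. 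That proviso is why the boundary cases $\gamma=2$ and $\gamma=2/(2-\alpha)$ are excluded from the theorem; your write-up should note that the strict inequalities in the hypotheses are what make the maximum principle applicable for $q<1$. Second, your claim that $\tau_{X_1^*}(q)=q/\gamma$ on all of $(0,\gamma)$ is not what the $b=0$ analysis gives: when $\gamma>1+\alpha$ (which can occur in case (b)), Lemma \ref{lemma:X1:2} yields $\tau_{X_1^*}(q)=q/(1+\alpha)$ for $q\leq 1+\alpha$ and only the upper bound $\tau_{X_1^*}(q)\leq q-\alpha$ for $1+\alpha<q<\gamma$. This does not damage your conclusion, since $q/(1+\alpha)<(1-\alpha/2)q$ for $\alpha\in(0,1)$ and $q-\alpha<(1-\alpha/2)q$ for $q<2\,(>\gamma)$, so $X_3^*$ still dominates; but the comparison must be made against these expressions, not against $q/\gamma$.
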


Note that if the Gaussian component is present, then the scaling function displays no intermittency. For example, even if the scaling functions of the two components $X_1^*$ and $X_2^*$ have a change-point, this cannot be seen from the scaling function of $X^*$ due to infinite moments (see Figures \ref{fig6c}, \ref{fig6d}, \ref{fig6e} in Subsection \ref{s:sfX*}).

\section{Proofs}\label{s:proofs}

For the proofs of the main results, we first make a decomposition of the integrated process $X^*$ into components that have different limiting behavior. We then compute the scaling functions of these components and finally combine them to get the scaling function of the integrated process.

\subsection{The basic decomposition}\label{s:decomposition}
The decomposition is based on the L\'evy-It\^{o} decomposition of the background driving L\'evy process $L$. Let
\begin{align*}
\mu_1(dx)&=\mu(dx) \1_{\{|x|>1\}},\\
\mu_{2}(dx)&=\mu(dx) \1_{\{|x|\leq1\}},
\end{align*}
where $\mu$ is the L\'evy measure of the L\'evy process $L$. Then we can make a decomposition of the L\'evy basis into independent components:
\begin{itemize}
\item $\Lambda_1$ with characteristic quadruple $(a,0,\mu_1,\pi)$,
\item $\Lambda_2$ with characteristic quadruple $(0,0,\mu_2,\pi)$,
\item $\Lambda_3$ with characteristic quadruple $(0,b,0,\pi)$.
\end{itemize}
Note that if $X(1)$ has finite mean, then the assumption $\E X(1)=0$ implies that $\E L(1)=0$ (see \cite[Eq.~(2.8)]{bn2001}) and we must have $a=-\int_{|x|>1} |x|\mu (dx)$ (see e.g.~\cite[Ex.~25.12]{sato1999levy}). Let $L_1(t)$, $L_2(t)$ and $L_3(t)$, $t\in \R$ denote the corresponding background driving L\'evy processes so that we have the following cumulant functions:
\begin{align}
C \left\{ \zeta \ddagger L_1(1) \right\} & = i\zeta a +\int_{\R}\left( e^{i\zeta x}-1\right) \mu_1(dx) = i\zeta a +\int_{|x|>1}\left( e^{i\zeta x}-1\right) \mu(dx),\label{kappaL1}\\
C \left\{ \zeta \ddagger L_2(1) \right\} &=\int_{\R}\left( e^{i\zeta x}-1-i\zeta x \mathbf{1}_{[-1,1]}(x)\right) \mu_2(dx)\nonumber\\
&=\int_{|x|\leq 1}\left( e^{i\zeta x}-1-i\zeta x\mathbf{1}_{[-1,1]}(x)\right) \mu(dx) ,\nonumber\\
C \left\{ \zeta \ddagger L_3(1) \right\} &= -\frac{\zeta ^{2}}{2} b.\nonumber
\end{align}
Note that $L_1$ is a compound Poisson process and $L_3$ is Brownian motion. Consequently, we can represent $X(t)$ as
\begin{equation}\label{e:decomposition}
\begin{aligned}
X(t) &= \int_{0}^\infty \int_{-\infty }^{\xi t}e^{-\xi t + s} \Lambda_1(d\xi,ds) + \int_{0}^\infty \int_{-\infty }^{\xi t}e^{-\xi t + s} \Lambda_2(d\xi,ds)\\
&\hspace{4cm}+ \int_{0}^\infty \int_{-\infty }^{\xi t}e^{-\xi t + s} \Lambda_3(d\xi,ds)\\
&=: X_1(t) + X_2(t) + X_3(t),
\end{aligned}
\end{equation}
with $X_1$, $X_2$ and $X_3$ independent. In the following, $X^*_1$, $X^*_2$ and $X_3^*$ will denote the corresponding integrated processes which are independent. \\

Before we proceed, we note here two technical facts that will be used in the proofs below. The first is a stochastic Fubini theorem related to the change of the order of integration for the integrated
process. It has been used implicitly in many references (see e.g.~\cite{bn2001,GLST2017Arxiv,GLT2017Limit}).

\begin{lemma}\label{lemma:Fubini}
For the integrated supOU process $X^*$ one has
\begin{equation}\label{e:changeofintorder}
X^*(t)=\int_0^t \left(\int_{\R_+\times \R} f(u, \xi, s) \Lambda (d\xi, ds) \right) du =  \int_{\R_+\times \R} \left( \int_0^t f(u, \xi, s) du \right) \Lambda (d\xi, ds), \ a.s.
\end{equation}
where $f(u, \xi, s) = e^{- \xi u + s} \1_{[0,\infty)} (\xi u -s)$.
\end{lemma}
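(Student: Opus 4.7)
The plan is to invoke a stochastic Fubini theorem for integrals with respect to L\'evy bases, applied to the joint kernel $(u,\xi,s) \mapsto \1_{[0,t]}(u) f(u,\xi,s)$ on $\R \times \R_+ \times \R$; such a theorem is implicit in the constructions of \cite{bn2001,GLST2017Arxiv,GLT2017Limit}. Concretely, I would verify the Rajput--Rosinski integrability conditions separately for the two parameterizations of the integrand and then invoke a Fubini theorem for L\'evy-basis integrals.

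First, for each fixed $u\geq 0$ the kernel $f(u,\xi,s) = e^{-\xi u + s}\1_{[0,\infty)}(\xi u - s)$ lies in $[0,1]$, and the substitution $v = \xi u - s$ gives
\[
\int_{\R_+ \times \R} \bigl(1 \wedge (f(u,\xi,s)\, x)^2\bigr)\, ds\, \pi(d\xi) \;=\; \int_0^\infty \bigl(1 \wedge e^{-2v} x^2\bigr)\, dv \;=\; \begin{cases} x^2/2, & |x|\leq 1,\\ \log|x| + 1/2, & |x|>1, \end{cases}
\]
which is $\mu$-integrable under the standard L\'evy property $\int(1\wedge x^2)\,\mu(dx)<\infty$ together with the log-moment condition $\int_{|x|>1}\log|x|\,\mu(dx)<\infty$. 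The log-moment is a standard prerequisite for the existence of the supOU process (see \cite{bn2001}) and is automatic under the regular variation in Assumption \ref{assum}(i). The drift and Gaussian parts of the Rajput--Rosinski conditions are verified analogously, using $\int f\, ds\, \pi(d\xi) = 1$ and $\int f^2\, ds\, \pi(d\xi) = 1/2$.

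Second, the same conditions must be verified for the integrated kernel $F(\xi,s):=\int_0^t f(u,\xi,s)\,du$, which is supported in $\{s\leq \xi t\}$, bounded by $\min(t,\xi^{-1})$, and decays exponentially as $s\to -\infty$. The bound $\min(t,\xi^{-1})$ combined with the hypothesis $\int_0^\infty \xi\,\pi(d\xi)<\infty$ from Assumption \ref{assum}(ii) handles the large-$\xi$ regime, and the rest of the verification mirrors the first step. Once both integrability statements are in place, \eqref{e:changeofintorder} follows by approximating $\1_{[0,t]}(u) f(u,\xi,s)$ by step functions in $u$, applying ordinary Fubini and the linearity of Rajput--Rosinski integrals to each approximation, and passing to the limit using dominated convergence for the Lebesgue integral and $L^0$-convergence of the stochastic integrals. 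The main obstacle is that a clean reference for the stochastic Fubini theorem in the general L\'evy-basis setting must be identified (e.g.\ the ambit-field formulation in \cite{barndorff2018ambit}); the genuine technical content, however, lies in the two integrability verifications above rather than in the Fubini interchange itself.
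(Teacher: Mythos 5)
Your proposal correctly identifies that the statement is a stochastic Fubini theorem and that integrability of both kernels must be checked, but the conditions you verify are not the ones that make the interchange work, and the final limiting step is where the real difficulty sits. The Rajput--Rosi\'nski condition $\int (1\wedge (f x)^2)\,\mu(dx)\,ds\,\pi(d\xi)<\infty$ (plus the drift/Gaussian parts) only guarantees that each stochastic integral \emph{exists}; it does not justify exchanging $\int_0^t(\cdot)\,du$ with $\int(\cdot)\,d\Lambda$. Your closing argument --- approximate by step functions in $u$, pass to the limit ``using dominated convergence for the Lebesgue integral and $L^0$-convergence of the stochastic integrals'' --- is precisely the gap: $L^0$-convergence of $Y_n(u)\to Y(u)$ for each $u$ does not let you conclude $\int_0^t Y_n(u)\,du\to\int_0^t Y(u)\,du$ without an integrable dominating random function, and producing such a dominant is equivalent to an $L^1$-type bound on the integrand. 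This is why the stochastic Fubini theorem the paper invokes (Barndorff-Nielsen and Basse-O'Connor, Theorem 3.1) requires membership in the Musielak--Orlicz space $L_{\phi_1}$, i.e.\ the condition with $|xf|^2\wedge|xf|$ rather than $|xf|^2\wedge 1$, which amounts to $\E\,|\!\int f\,d\Lambda|<\infty$.

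The point is not pedantic here, because under Assumption \ref{assum}(i) with $\gamma\in(0,1]$ the process $X$ has \emph{infinite mean}, so no $L^1$ domination is available and your scheme cannot be repaired by simply citing a stronger reference. The paper handles this by splitting the L\'evy basis into the small-jump part (where all moments are finite and the $L_{\phi_1}$ Fubini theorem applies) and the big-jump compound Poisson part, for which the interchange is proved \emph{pathwise}: the series representation $\sum_k Z_k e^{-R_k u+\Gamma_k}\1_{[0,\infty)}(R_k u-\Gamma_k)$ has only finitely many terms with $\Gamma_k>0$, and the tail $\sum_{k\le 0}|Z_k|e^{\Gamma_k}$ is a.s.\ finite, so classical Fubini--Tonelli applies. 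Your proposal contains no substitute for this second case; to complete it you would need either to add this pathwise argument for the large-jump component or to restrict to $\gamma>1$, neither of which is indicated in your sketch. (The integrability computations you do carry out, including $\int_0^\infty(1\wedge e^{-2v}x^2)\,dv=\log|x|+1/2$ for $|x|>1$ and the role of the log-moment condition, are correct as far as they go.)
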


\begin{proof}
If $\E |X(1)| < \infty$, then we can directly use a stochastic Fubini theorem given in \cite[Theorem 3.1]{barndorff2011quasi}. The conditions of Theorem 3.1 and Remark 3.2 in \cite[Theorem 3.1]{barndorff2011quasi} boil down to showing that
\begin{enumerate}[(i)]
\item for every $u\in [0,t]$, $f(u, \cdot, \cdot)$ is in the Musielak-Orlicz space $L_{\phi_1}$, that is
\begin{equation*}
\int_{0}^\infty \int_{-\infty}^\infty \left(\sigma^2 f(u, \xi, s)^2 + \int_{\R} (|x f(u, \xi ,s)|^2 \wedge |x f(u, \xi, s)|) \mu(dx) \right) \pi(d\xi) ds < \infty,
\end{equation*}
\item it holds that
\begin{equation*}
\int_0^t \int_{0}^\infty \int_{-\infty}^\infty \left(\sigma^2 f(u, \xi, s)^2 + \int_{\R} (|x f(u, \xi ,s)|^2 \wedge |x f(u, \xi, s)|) \mu(dx) \right) \pi(d\xi) ds du < \infty.
\end{equation*}
\end{enumerate} 
By \cite[Theorem 3.3]{rajput1989spectral}, $L_{\phi_1}$ coincides with the space of $\Lambda$-integrable functions $g$ such that $\E |\int g d\Lambda|<\infty$. Theorem 3.1 of \cite{bn2001} shows that $f(u,\cdot,\cdot)$ is $\Lambda$-integrable and since we have assumed $\E |X(u)|<\infty$, we conclude that condition (i) holds. By the change of variables $r= e^{- \xi u + s}$ we get
\begin{align*}
\int_0^t \int_{\R_+\times \R} &\left(\sigma^2 e^{2(- \xi u + s)} \1_{[0,\infty)} (\xi u -s) + \int_{\R} (|x e^{- \xi u + s}|^2 \wedge |x e^{- \xi u + s} |) \1_{[0,\infty)} (\xi u -s) \mu(dx) \right) \pi(d\xi) ds du\\
&=\int_0^t \int_{\R_+\times \R} \left(\sigma^2 r^2 \1_{(0,1]}(r) + \int_{\R} (|x r|^2 \wedge |x r|) \1_{(0,1]}(r) \mu(dx) \right) \pi(d\xi) r^{-1} dr du\\
&=t \int_{\R_+\times \R} \left(\sigma^2 r^2 \1_{(0,1]}(r) + \int_{\R} (|x r|^2 \wedge |x r|) \1_{(0,1]}(r) \mu(dx) \right) \pi(d\xi) r^{-1} dr\\
&=t \int_{0}^\infty \int_{-\infty}^\infty \left(\sigma^2 f(u, \xi, s)^2 + \int_{\R} (|x f(u, \xi ,s)|^2 \wedge |x f(u, \xi, s)|) \mu(dx) \right) \pi(d\xi) ds,
\end{align*}
hence, (ii) follows from (i).

Suppose now that $\E |X(1)| = \infty$. We can decompose the L\'evy basis similarly as in \eqref{e:decomposition} into independent L\'evy basis $\Lambda_1'$ with characteristic quadruple $(0,0,\mu_1,\pi)$, $\mu_1(dx)=\mu(dx) \1_{\{|x|>1\}}$, and $\Lambda_2'$ with characteristic quadruple $(a,b,\mu_2,\pi)$, $\mu_2(dx)=\mu(dx) \1_{\{|x|\leq 1\}}$. For the integral with respect to $\Lambda_2$ we can apply \cite[Theorem 3.1]{barndorff2011quasi} as in the previous case. It remains to consider $\Lambda_1$, which is a compound Poisson random measure and can be written as
\begin{equation*}
\Lambda_1(A)=\int_A \int_\R  x N(dw, dx),
\end{equation*} 
where $N$ is a Poisson random measure on $\R_+\times \R\times \R$ with intensity $\pi \times Leb \times \mu_1$. We can represent $\Lambda_1(A)$ as
\begin{equation*}
\Lambda_1(A) = \sum_{k=-\infty}^{\infty} Z_k \delta_{(R_k, \Gamma_k)} (A),
\end{equation*}
where $-\infty<\cdots<\Gamma_{-1} < \Gamma_0 \leq 0 < \Gamma_1 < \cdots<\infty$ are the jump times of a Poisson process on $\R$ with intensity $\mu_1(\R)$, $\{Z_k, \, k \in \Z\}$ is an i.i.d.~sequence with distribution $\mu_1(dx)/\mu_1(\R)$, $\{R_k, \, k \in \Z\}$ is an i.i.d.~sequence with distribution $\pi$ and all three sequences are independent (see e.g.~\cite{fasen2007extremes}). The supOU process can then be represented as
\begin{align*}
X(u) &= \sum_{k=-\infty}^{\infty} Z_k  e^{- R_k u + \Gamma_k} \1_{[0,\infty)} (R_k u - \Gamma_k)\\
&= \sum_{k=-\infty}^{0} Z_k  e^{- R_k u + \Gamma_k}  + \sum_{k=1}^{\infty} Z_k  e^{- R_k u + \Gamma_k} \1_{[0,\infty)} (R_k u - \Gamma_k).
\end{align*}
The second sum has finitely many terms a.s.~due to $\1_{[0,\infty)} (R_k u - \Gamma_k)$ term, hence one can change the order of integration when integrating with respect to $u$. For the first sum, we have by using the inequality $(1-e^{-x})/x\leq 1$, $x>0$,
\begin{equation*}
\sum_{k=-\infty}^{0} \int_0^t |Z_k|  e^{- R_k u + \Gamma_k} du = \sum_{k=-\infty}^{0} |Z_k| e^{\Gamma_k} R_k^{-1} (1-e^{-R_k t}) \leq t \sum_{k=-\infty}^{0} |Z_k| e^{\Gamma_k}.
\end{equation*}
The right-hand side is finite since it is the integral of $e^{-x}$ with respect to compound Poisson random measure with intensity $Leb\times |\mu_1|$ (see e.g.~\cite{last2017lectures}). By the classical Fubini-Tonelli theorem we can change the order of integration. This completes the proof of \eqref{e:changeofintorder}.
\end{proof}

The second fact concerns again $X^*(t)$ in \eqref{e:changeofintorder}. Clearly $\E |X^*(t)|^q<\infty$ for $q<\gamma$. The next lemma shows that $\E |X^*(t)|^q=\infty$ for $q>\gamma$.

\begin{lemma}\label{lemma:momentsX*}
If the supOU process $X$ satisfies \eqref{regvarofX} for some $\gamma>0$, then for the integrated process $X$ we have $\E |X^*(t)|^q=\infty$ for $q>\gamma$ and every $t>0$.
\end{lemma}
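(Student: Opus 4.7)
The approach is to exploit the infinite divisibility of $X^*(t)$ in order to convert the moment question into a tail condition on its L\'evy measure, which can then be bounded below using the regular variation of $\mu$ at infinity.

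By Lemma \ref{lemma:Fubini},
\begin{equation*}
X^*(t) = \int_{\R_+ \times \R} g_t(\xi, s)\, \Lambda(d\xi, ds), \qquad g_t(\xi, s) := \int_0^t e^{-\xi u + s} \1_{[0,\infty)}(\xi u - s)\, du,
\end{equation*}
where $g_t \geq 0$ is a deterministic kernel. As an integral of a deterministic kernel with respect to an infinitely divisible random measure, $X^*(t)$ is itself infinitely divisible, and by the Rajput--Rosinski theory (see \cite{rajput1989spectral}) its L\'evy measure is
\begin{equation*}
\nu_{X^*(t)}(B) = \int_0^\infty \pi(d\xi) \int_{-\infty}^\infty ds \int_{\R} \1_B\!\bigl(x\, g_t(\xi, s)\bigr)\, \mu(dx), \qquad B \in \mathcal{B}(\R \setminus \{0\}).
\end{equation*}
By the standard moment characterization for infinitely divisible distributions (e.g.~\cite[Theorem 25.3]{sato1999levy}), $\E|X^*(t)|^q < \infty$ if and only if $\int_{|x|>1} |x|^q\, \nu_{X^*(t)}(dx) < \infty$, so it suffices to bound the tail of $\nu_{X^*(t)}$ from below by a function that is regularly varying of index $-\gamma$.

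By \eqref{e:alternativeDOAcondition}, $\mu([x,\infty)) \sim p\gamma k(x) x^{-\gamma}$ and $\mu((-\infty,-x]) \sim q\gamma k(x) x^{-\gamma}$ as $x \to \infty$ with $p+q>0$; without loss of generality $p>0$ (the other case is symmetric). The density of $\pi$ is regularly varying of index $\alpha-1$ at the origin by \eqref{regvarofp}, hence strictly positive on some subinterval $[\xi_0,\xi_1]\subset(0,\infty)$, giving $\pi([\xi_0,\xi_1])>0$. A direct computation yields $g_t(\xi,s)=e^s(1-e^{-\xi t})/\xi$ for $s\leq 0$, so $g_t$ is bounded below by a positive constant $c_t$ on the set $\{(\xi,s):\xi\in[\xi_0,\xi_1],\,s\in[-1,0]\}$. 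Restricting the integral in the formula above to this region and to $x>0$ yields
\begin{equation*}
\nu_{X^*(t)}([y,\infty)) \;\geq\; \pi([\xi_0,\xi_1])\cdot \mu\!\bigl([y/c_t,\infty)\bigr), \qquad y>0,
\end{equation*}
which is regularly varying of index $-\gamma$ and strictly positive for all sufficiently large $y$. This forces $\int_{x>1} x^q\, \nu_{X^*(t)}(dx)=\infty$ for every $q>\gamma$, whence $\E|X^*(t)|^q=\infty$.

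The only non-routine step is identifying a region where $\pi$ charges positively and $g_t$ is bounded below; this is supplied by combining the asymptotic \eqref{regvarofp} of the density of $\pi$ near the origin with the explicit form of $g_t$ for $s\leq 0$. Everything else is a direct application of the Rajput--Rosinski representation and the moment characterization for infinitely divisible laws.
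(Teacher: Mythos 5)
Your proof is correct and follows essentially the same route as the paper: represent $X^*(t)$ via the stochastic Fubini lemma, identify its L\'evy measure through the Rajput--Rosinski theory, bound that measure's tail from below using the regular variation of $\mu$ at infinity, and conclude via the moment criterion of \cite[Theorem 25.3]{sato1999levy}. The only (harmless) difference is in the final estimate: you localize to a region $[\xi_0,\xi_1]\times[-1,0]$ where $g_t$ is bounded below by a constant, obtaining a lower bound exactly regularly varying of index $-\gamma$, whereas the paper integrates the pointwise bound $\mu([y/g_t(\xi,s),\infty))\geq (y/g_t(\xi,s))^{-\gamma-\delta}$ over the whole domain at the cost of an arbitrarily small $\delta$.
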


\begin{proof}
We will show that $\E |X^*(t)|^{\gamma+\varepsilon}=\infty$ for $\varepsilon>0$. By \eqref{e:changeofintorder}, $X^*(t)$ is representable as an integral with respect to L\'evy basis $\Lambda$
\begin{equation*}
X^*(t) = \int_{\R_+\times \R} g_t(\xi, s) \Lambda(d\xi, ds).
\end{equation*}
where
\begin{equation*}
g_t (\xi, s) = \int_0^t e^{- \xi u + s} \1_{[0,\infty)} (\xi u -s) du = \begin{cases}
e^s \xi^{-1} (1-e^{-\xi t}), & s<0,\\
\xi^{-1} (1-e^{-\xi t+s}), & 0<s<\xi t.\\
\end{cases}
\end{equation*}
Hence, by \cite[Theorem 2.7]{rajput1989spectral}, the distribution of $X^*(t)$ is infinitely divisible and for Borel set $B\subseteq\R$, the L\'evy measure $\mu_{g_t}$ of $X^*(t)$ is given by
\begin{equation*}
\mu_{g_t} (B)= \pi\times Leb \times \mu \left(\{(\xi,s,x) : g_t(\xi,s) x \in B\setminus \{0\}\} \right).
\end{equation*}
In particular, for $y>0$ and $B=[y,\infty)$, we have
\begin{equation*}
\mu_{g_t}\left( [y, \infty) \right) = \int_{0}^\infty \int_{-\infty}^{\infty} \mu \left([y/g_t(\xi,s)), \infty \right) \pi(d\xi) ds.
\end{equation*}
From \eqref{e:alternativeDOAcondition}, which is equivalent to \eqref{regvarofX}, one has for any $\delta<\varepsilon$, a $y_0$ such that $\mu \left([y,\infty)\right) \geq y^{-\gamma-\delta}$ for $y\geq y_0$. This implies that for $y\geq y_0$, $\mu_{g_t}\left( [y, \infty) \right) \geq C y^{-\gamma-\delta}$, where $C=\int_{0}^\infty \int_{-\infty}^{\infty} (g_t(\xi,s)))^{\gamma+\delta} \pi(d\xi) ds$. The same argument can be used for $\mu(-\infty, -y])$. But this implies that $\int_{\{|y|> 1\}} |y|^{\gamma+\varepsilon} \mu_{g_t}(dy)\geq C_1 + C_2 \int_{\{|y|\geq y_0\}} |y|^{\varepsilon-\delta}\mu_{g_t}(dy)=\infty$, where $C_1$ and $C_2$ are positive constants. Hence, we have $\E |X^*(t)|^{\gamma + \varepsilon}= \infty$ (see e.g.~\cite[Theorem 25.3]{sato1999levy}). 
\end{proof}

\subsection{Evaluation of the three scaling functions}\label{s:3sf}
We next investigate the scaling functions of each process $X^*_1$, $X^*_2$ and $X_3^*$ separately. These results will then be combined to give the scaling function of the integrated process.

\subsubsection{The scaling function of $X_1^*$}
The process $X_1^*$ has infinite moments of order greater than $\gamma$ and its scaling function $\tau_{X_1^*}$ is well-defined for $q \in (0,\gamma)$ (see Lemma \ref{lemma:momentsX*}). Following \cite[Lemma 5.1 and 5.2]{GLT2018LimitInfVar}, two processes may arise as a limit of $X_1^*$ after normalization.

If $\gamma<1+\alpha$, then as $T\to \infty$
\begin{equation}\label{e:X1*limitSRD}
\left\{ \frac{1}{T^{1/\gamma} k^{\#}(T)^{1/\gamma}} X_1^*(Tt) \right\} \overset{d}{\to} \left\{L_{\gamma} (t) \right\},
\end{equation}
where $k$ is the slowly varying function in \eqref{regvarofX}, $k^{\#}$ is the de Bruijn conjugate of $1/k(x^{1/\gamma})$ and the limit $\{L_{\gamma}\}$ is a $\gamma$-stable L\'evy process such that $L_{\gamma}(1)\overset{d}{=} \mathcal{S}_\gamma (\widetilde{\sigma}_{1,\gamma}, \rho, 0)$ with
\begin{equation*}
\widetilde{\sigma}_{1,\gamma} = \sigma \left( \gamma \int_0^\infty \xi^{1-\gamma} \pi(d\xi) \right)^{1/\gamma},
\end{equation*}
and $\sigma$ and $\rho$ given by \eqref{sigmaandrho}. Recall that the de Bruijn conjugate \cite[Subsection 1.5.7]{bingham1989regular} of some slowly varying function $h$ is a slowly varying function $h^{\#}$ such that
\begin{equation*}
h(x) h^{\#} \left(x h(x) \right) \to 1, \qquad h^{\#}(x) h (x h^{\#}(x)) \to 1,
\end{equation*}
as $x\to \infty$. By \cite[Theorem 1.5.13]{bingham1989regular} such function always exists and is unique up to asymptotic equivalence.

If, on the other hand $\gamma>1+\alpha$, then as $T\to \infty$
\begin{equation}\label{e:X1*limitLRD}
\left\{ \frac{1}{T^{1/(1+\alpha)} \ell^{\#}\left(T \right)^{1/(1+\alpha)}} X_1^*(Tt) \right\} \overset{d}{\to} \left\{L_{1 + \alpha} (t) \right\},
\end{equation}
where $\ell^{\#}$ is de Bruijn conjugate of $1/\ell(x^{1/(1+\alpha)})$ and the limit $\{L_{1+\alpha}\}$ is $(1+\alpha)$-stable L\'evy process such that $L_{1+\alpha}(1)\overset{d}{=} \mathcal{S}_\gamma (\widetilde{\sigma}_{1,\alpha}, \widetilde{\rho}_1, 0)$ with
\begin{equation}\label{sigma1alpha}
\widetilde{\sigma}_{1,\alpha} = \left( \frac{\Gamma(1-\alpha)}{\alpha} (c^-_1+c^+_1)  \cos \left(\frac{\pi (1+\alpha)}{2}\right) \right)^{1/(1+\alpha)}, \qquad \widetilde{\rho_1} = \frac{c^-_1 - c^+_1}{c^-_1+c^+_1},
\end{equation}
and  $c^-_1, c^+_1$ given by
\begin{equation}\label{c-+1}
c^-_1 = \frac{\alpha}{1+\alpha} \int_{-\infty}^{-1} |y|^{1+\alpha} \mu(dy), \qquad c^+_1 = \frac{\alpha}{1+\alpha} \int_1^{\infty} y^{1+\alpha} \mu(dy).
\end{equation}

We now consider convergence of moments in these limit theorems. First, if $\gamma<1+\alpha$, then we get the following scaling function for the process $X_1^*$.

\begin{lemma}\label{lemma:X1:1}
If Assumption \ref{assum} holds and $\gamma<1+\alpha$, then
\begin{equation*}
\tau_{X_1^*}(q) = \frac{1}{\gamma} q, \quad  0<q<\gamma.
\end{equation*}
\end{lemma}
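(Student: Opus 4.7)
The natural route is to upgrade the functional limit theorem \eqref{e:X1*limitSRD} to convergence of $q$-th absolute moments for any fixed $q\in(0,\gamma)$, and then read off the scaling function.

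First I would invoke \eqref{e:X1*limitSRD}: with $A_T=T^{1/\gamma}k^{\#}(T)^{1/\gamma}$ one has $X_1^*(T)/A_T\overset{d}{\to} L_\gamma(1)$, where $L_\gamma(1)\sim\mathcal{S}_\gamma(\widetilde\sigma_{1,\gamma},\rho,0)$. Since $L_\gamma(1)$ is $\gamma$-stable, $\E|L_\gamma(1)|^q$ is finite and strictly positive for every $q\in(0,\gamma)$. If I can show that the family $\{|X_1^*(T)/A_T|^q : T\geq T_0\}$ is uniformly integrable, the continuous-mapping/portmanteau argument will upgrade the weak convergence to convergence of the $q$-th moments, yielding
\begin{equation*}
\E|X_1^*(T)|^q \sim \E|L_\gamma(1)|^q \cdot T^{q/\gamma} k^{\#}(T)^{q/\gamma}\qquad (T\to\infty).
\end{equation*}
Because $k^{\#}$ is slowly varying, taking $\log$ and dividing by $\log T$ gives $\tau_{X_1^*}(q)=q/\gamma$, as required.

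To establish uniform integrability, I would fix $q'\in(q,\gamma)$ and show $\sup_{T\ge T_0}\E|X_1^*(T)/A_T|^{q'}<\infty$. The infinitely divisible structure of $X_1^*(T)$ is exactly the one made explicit in Lemma \ref{lemma:momentsX*}: the L\'evy measure $\mu_{g_T}$ of $X_1^*(T)$ is the image of $\pi\times Leb\times \mu_1$ under $(\xi,s,x)\mapsto g_T(\xi,s)x$. Using the explicit form of $g_T$ and the change of variables $r=e^{-\xi u+s}$ employed in the proof of Lemma \ref{lemma:Fubini}, together with the tail assumption \eqref{e:alternativeDOAcondition}, I would derive the uniform bounds
\begin{equation*}
\mu_{g_T}\bigl([y,\infty)\bigr)\le C\,T\,k(y)\,y^{-\gamma},\qquad \mu_{g_T}\bigl((-\infty,-y]\bigr)\le C\,T\,k(y)\,y^{-\gamma},
\end{equation*}
valid for $y\geq 1$ and $T\geq T_0$. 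By rescaling the integration variable, these bounds transfer to the L\'evy measure of $X_1^*(T)/A_T$, whose tails are then dominated (via Potter's inequality applied to $k$ and to $k^{\#}$) by a constant multiple of $y^{-\gamma+\epsilon}$ uniformly in $T$. For an infinitely divisible distribution this is equivalent to a uniform bound on the $q'$-th absolute moment (e.g.\ \cite[Theorem 25.3]{sato1999levy} combined with a standard Kruglov-type estimate for the moment in terms of $\int |y|^{q'}\mathbf{1}_{\{|y|>1\}}\mu_{g_T/A_T}(dy)$ plus a truncated-variance term).

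The principal obstacle is step three: extracting tail bounds on $\mu_{g_T}$ that are uniform in $T$ in the presence of the slowly varying factor $k$, since a naive bound only gives the right rate up to a slowly varying prefactor. Handling this cleanly requires a careful splitting of the integration domain in $(\xi,s)$ according to whether $g_T(\xi,s)$ is small or of order one, together with uniform Potter bounds and the assumption \eqref{pifinitemean} to control the contribution of the $\pi$-integration. Once this uniform tail estimate is in hand, everything else reduces to standard facts about stable laws and slowly varying functions.
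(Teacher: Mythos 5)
Your overall skeleton is the same as the paper's: prove uniform integrability of $\{|A_T^{-1}X_1^*(T)|^q\}$, upgrade the weak convergence \eqref{e:X1*limitSRD} to convergence of $q$-th moments, and read off $\tau_{X_1^*}(q)=q/\gamma$ from self-similarity of the limit. Where you genuinely diverge is the engine for uniform integrability: the paper works on the Fourier side, symmetrizing and using the von Bahr--Esseen representation $\E|Y|^q=k_q\int(1-\Re e^{\kappa_Y(\zeta)})|\zeta|^{-q-1}d\zeta$ together with $T$-uniform bounds on the cumulant function $|\kappa_{X_1^*}(A_T^{-1}\zeta,Tt)|$ (split into the $s<0$ and $s>0$ pieces $\Delta X^*_{1,1}$, $\Delta X^*_{1,2}$), whereas you work directly with the L\'evy measure $\mu_{g_T}$ of $X_1^*(T)$ and aim for $T$-uniform tail bounds. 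Your route is viable in principle, and your central estimate $\mu_{g_T}([y,\infty))\lesssim T\,k(y)\,y^{-\gamma}$ is of the right order, but as written the proposal has two substantive gaps.

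First, you never identify where the hypothesis $\gamma<1+\alpha$ enters, and it is essential: for $\gamma>1+\alpha$ the conclusion is false (Lemma \ref{lemma:X1:2} gives slope $1/(1+\alpha)$ instead). In your scheme the hypothesis is what makes the $\pi$-integration converge. Heuristically $\mu_{g_T}([y,\infty))\approx\int_0^\infty(\xi T\vee 1)\,\mu_1([y\xi,\infty))\,\pi(d\xi)\approx k(y)y^{-\gamma}\,T\int_0^\infty\xi^{1-\gamma}\pi(d\xi)$, and by \eqref{regvarofp} the integral $\int_0^\infty\xi^{1-\gamma}\pi(d\xi)$ is finite precisely when $\gamma<1+\alpha$ (together with \eqref{pifinitemean} for the tail at infinity); when $\gamma>1+\alpha$ the small-$\xi$ region dominates and the claimed bound fails. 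A correct write-up must make this splitting of the $(\xi,s)$-domain explicit, since it is the analogue of the paper's bound \eqref{eq:proof:SRD1Xnew1} where the condition $1+\alpha>\gamma$ is invoked. Second, the passage from a uniform bound on $\int_{\{|y|>1\}}|y|^{q'}\mu_{g_T/A_T}(dy)$ to a uniform bound on $\E|A_T^{-1}X_1^*(T)|^{q'}$ is not the routine equivalence you suggest. Sato's Theorem 25.3 gives only a qualitative finiteness criterion; a quantitative, $T$-uniform moment bound additionally requires (i) a separate small-$y$ tail estimate on $\mu_{g_T/A_T}$ to control the truncated variance $\int_{\{|y|\le1\}}y^2\mu_{g_T/A_T}(dy)$ uniformly (your stated tail bound is only for $y\ge1$, and $\mu_1$ being supported on $\{|x|>1\}$ does not prevent $\mu_{g_T/A_T}$ from charging a neighbourhood of the origin, since $g_T/A_T$ is small); (ii) uniform control of the drift term, which in the $\gamma\le1$ case plays the role of the median term the paper handles via \eqref{symmetricbound:leq1}; and (iii) for $q'\in(1,2)$, a Rosenthal or von Bahr--Esseen type inequality for the compound-Poisson large-jump part. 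None of these is insurmountable, but together they constitute roughly the same amount of work the paper expends on the characteristic-function side, so the proposal should be regarded as a plausible programme rather than a complete proof.
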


\begin{proof}
Let $q<\gamma$ and $A_T=T^{1/\gamma} k^{\#}(T)^{1/\gamma}$. We will show that $\{|A_T^{-1} X_1^*(Tt)|^q\}$ is uniformly integrable so that $\E |A_T^{-1} X_1^*(Tt)|^q \to \E |L_{\gamma}(t)|^q$ as $T\to\infty$, where $\{L_{\gamma}\}$ is as in \eqref{e:X1*limitSRD}.

First we recall some known results. If $Y$ is some random variable, let $\widetilde{Y}$ denote its symmetrization, i.e.~$\widetilde{Y}=Y-Y'$ with $Y'=^d Y$ and independent of $Y$. By \cite[Lemma 4]{von1965inequalities}, if $r \in [1,2]$, $\E|Y|^r<\infty$ and $\E Y=0$, then
\begin{equation}\label{symmetricbound:geq1}
\E|Y|^r \leq \E |\widetilde{Y}|^r.
\end{equation}
On the other hand, if $r <1$ and $\E|Y|^r<\infty$, then we obtain from \cite[Proposition 3.6.4]{gut2013probability} that
\begin{equation}\label{symmetricbound:leq1}
\E|Y|^r \leq 2\E |\widetilde{Y}|^r + 2 | \med (Y)|^r,
\end{equation}
where $\med(Y)$ denotes the median of $Y$. Furthermore, one may express $r$-th absolute moment, $0<r<2$ as \cite[Lemma 2]{von1965inequalities}
\begin{equation}\label{e:thm:vonbahr2}
\E |Y|^r = k_r \int_{-\infty}^{\infty} \left( 1- \Re \exp \kappa_Y(\zeta) \right) |\zeta|^{-r-1} d \zeta
\end{equation}
where $k_r>0$ is a constant.

Consider now the symmetrized random variable $\widetilde{X_1^*}(Tt)$. The characteristic function of $\widetilde{X_1^*}(Tt)$ is $|\exp \kappa_{X_1^*} (\zeta, Tt)|^2$, hence from \eqref{e:thm:vonbahr2} we get
\begin{equation}\label{momSLPcase111}
\E \left| A_T^{-1} \widetilde{X_1^*}(Tt)\right|^q = k_q \int_{-\infty}^{\infty} \left( 1- |\exp \kappa_{X_1^*} (A_T^{-1} \zeta, Tt)|^2 \right) |\zeta|^{-q-1} d \zeta.
\end{equation}
In order to bound the integral in \eqref{momSLPcase111}, we shall first derive the bounds for $|\kappa_{X_1^*} (A_T^{-1} \zeta, Tt)|$. For this we make the decomposition from \eqref{supOU} by using Lemma \ref{lemma:Fubini}:
\begin{equation}\label{e:thmSRD1:decomposition}
\begin{aligned}
&X_1^*(Tt) = \int_{u=0}^{T t} \int_{\xi=0}^\infty \int_{s=-\infty}^{\xi u} e^{-\xi u + s} \Lambda_1(d\xi,ds) du  \\
&=\int_{u=0}^{T t} \int_{\xi=0}^\infty \int_{s=-\infty}^{0} e^{-\xi u + s} du \Lambda_1(d\xi,ds) + \int_{u=0}^{T t} \int_{\xi=0}^\infty \int_{s=0}^{\xi u} e^{-\xi u + s} du \Lambda_1(d\xi,ds)\\
&=\int_{\xi=0}^\infty \int_{s=-\infty}^{0} \int_{u=0}^{T t} e^{-\xi u + s} du \Lambda_1(d\xi,ds) +  \int_{\xi=0}^\infty \int_{u=0}^{T t} \int_{s=0}^{\xi u} e^{-\xi u + s} du \Lambda_1(d\xi,ds)\\
&=\int_{\xi=0}^\infty \int_{s=-\infty}^{0} \int_{u=0}^{T t} e^{-\xi u + s} du \Lambda_1(d\xi,ds) +  \int_{\xi=0}^\infty \int_{s=0}^{\xi T t} \int_{u=s/\xi}^{T t} e^{-\xi u + s} du \Lambda_1(d\xi,ds)\\
&=: \Delta X^*_{1,1}(Tt) + \Delta X^*_{1,2}(Tt),
\end{aligned}
\end{equation}
where we have used the fact that
\begin{equation*}
\1_{\{0\leq u \leq Tt\}} \1_{\{0\leq s \leq \xi u\}} = \1_{\{0 \leq s/\xi \leq u \leq Tt\}} = \1_{\{0 \leq s \leq \xi Tt\}} \1_{\{s/\xi\leq u \leq Tt\}}.
\end{equation*}
Since $\Delta X^*_{1,1}(Tt)$ and $\Delta X^*_{1,2}(Tt)$ are independent, we get
\begin{equation}\label{eq:thmSRD1:twoparts}
| \kappa_{X_1^*} (A_T^{-1} \zeta, Tt) | \leq | \kappa_{\Delta X^*_{1,1}} (A_T^{-1} \zeta, Tt) | + |\kappa_{\Delta X^*_{1,2}} (A_T^{-1} \zeta, Tt)|.
\end{equation}
Now we consider bounds for each term separately.

\begin{itemize}[leftmargin=*]
\item For the first term on the right hand side we use some parts of the proof of \cite[Lemma 5.1]{GLT2018LimitInfVar}. From the integration formula for the stochastic integral, for any $\Lambda$-integrable function $f$ on $\R_+ \times \R$, one has (see \cite{rajput1989spectral})
\begin{equation}\label{integrationrule}
C\left\{ \zeta \ddagger \int_{\R_+ \times \R}f d\Lambda \right\} = \int_{\R_+ \times \R} \kappa_{L} (\zeta f(\xi,s)) ds \pi(d\xi)
\end{equation}
and we get that
\begin{align}
\kappa_{\Delta X^*_{1,1}} (A_T^{-1} \zeta, Tt) &= \int_0^\infty \int_{-\infty}^{0} \kappa_{L_1} \left( \zeta A_T^{-1} \int_{0}^{Tt} e^{-\xi u + s} du \right) ds \pi(d\xi)\nonumber \\
&= \int_0^\infty \int_{-\infty}^{0} \kappa_{L_1} \left( \zeta A_T^{-1} e^s \xi^{-1} \left(1-e^{-\xi Tt} \right)\right) ds \pi(d\xi).\label{eq:proof:SRD1X1}
\end{align}
The assumption \eqref{regvarofL}, together with \cite[Theorem 2.6.4]{ibragimov1971independent}, imply that
\begin{equation}\label{eq:proof:SRD1X1:ibragimov}
\kappa_{L_1} (\zeta) \sim  k(1/|\zeta|) \kappa_{\mathcal{S}_\gamma (\gamma^{1/\gamma} \sigma, \rho, 0)}(\zeta), \quad \text{ as } \zeta \to 0.
\end{equation}
Since $|\kappa_{\mathcal{S}_\gamma (\gamma^{1/\gamma} \sigma, \rho, 0)}(\zeta)| = C |\zeta|^\gamma$ and $k$ is slowly varying at infinity, then, for arbitrary $\delta>0$, in some neighborhood of the origin one has
\begin{equation*}
|\kappa_{L_1}(\zeta) | \leq C_1 |\zeta|^{\gamma-\delta}, \quad |\zeta| \leq \varepsilon.
\end{equation*}
On the other hand, since $\left| e^{i\zeta x}-1 \right| \leq 2$, we have from \eqref{kappaL1} that
\begin{equation*}
|\kappa_{L_1}(\zeta) | \leq |a| |\zeta| +  2 \int_{\R} \mathbf{1}_{\{|x|> 1\}} \mu(dx) \leq |a| |\zeta| + C_2,
\end{equation*}
since the L\'evy measure is integrable on $\{|x|> 1\}$. By taking $C_3$ large enough we arrive at the bound
\begin{equation}\label{eq:proof:SRD1X1:bound}
|\kappa_{L_1}(\zeta) | \leq C_1 |\zeta|^{\gamma-\delta} \1_{\{|\zeta| \leq \varepsilon\}}  + C_3 |\zeta| \1_{\{|\zeta| > \varepsilon\}}.
\end{equation}
Now we have from \eqref{eq:proof:SRD1X1}
\begin{align}
&\left| \kappa_{\Delta X^*_{1,1}} (A_T^{-1} \zeta, Tt) \right| \nonumber\\
&\leq C_1 \int_0^\infty \int_{-\infty}^{0} \left|\zeta A_T^{-1}  e^s \xi^{-1} \left(1-e^{-\xi Tt} \right)\right|^{\gamma-\delta} \1_{\{|\zeta A_T^{-1}  e^s \xi^{-1} \left(1-e^{-\xi Tt} \right)| \leq \varepsilon\}}  ds \pi(d\xi)\nonumber\\
&\ + C_3 \int_0^\infty \int_{-\infty}^{0} \left|\zeta A_T^{-1}  e^s \xi^{-1} \left(1-e^{-\xi Tt} \right)\right| \1_{\{|\zeta A_T^{-1}  e^s \xi^{-1} \left(1-e^{-\xi Tt} \right)| > \varepsilon\}}  ds \pi(d\xi)\nonumber\\
&\leq C_1 |\zeta|^{\gamma-\delta} A_T^{-\gamma+\delta} \int_0^\infty \int_{-\infty}^{0} e^{(\gamma-\delta)s} \left(\xi^{-1} \left(1-e^{-\xi Tt} \right)\right)^{\gamma-\delta} ds \pi(d\xi)\nonumber\\
& \ + C_3 |\zeta| t A_T^{-1} T \int_0^\infty \int_{-\infty}^{0} e^{s} (\xi Tt)^{-1} \left(1-e^{-\xi Tt} \right) \1_{\{|\zeta A_T^{-1} \xi^{-1} \left(1-e^{-\xi Tt} \right)| > \varepsilon\}} ds \pi(d\xi)\nonumber\\
& \leq C_1 \frac{1}{\gamma - \delta} |\zeta|^{\gamma-\delta} t^{\gamma-\delta} A_T^{-\gamma+\delta} T^{\gamma-\delta}\int_0^\infty \left((\xi Tt)^{-1} \left(1-e^{-\xi Tt} \right)\right)^{\gamma-\delta}  \pi(d\xi)\label{eq:proof:SRD1Xnew11}\\
& \ + C_3 |\zeta| t A_T^{-1} T \int_0^\infty (\xi Tt)^{-1} \left(1-e^{-\xi Tt} \right) \1_{\{|\zeta A_T^{-1} \xi^{-1} \left(1-e^{-\xi Tt} \right)| > \varepsilon\}} \pi(d\xi).\label{eq:proof:SRD1Xnew1}
\end{align}

We consider now each term separately. For the first term we proceed as in the proof of \cite[Lemma 5.1]{GLT2018LimitInfVar}. If $\gamma\in (0,1)$, then from the inequality $x^{-1}(1-e^{-x})\leq 1$, $x>0$, we get
\begin{align*}
&C_1 \frac{1}{\gamma - \delta} |\zeta|^{\gamma-\delta} t^{\gamma-\delta} A_T^{-\gamma+\delta} T^{\gamma-\delta}\int_0^\infty \left((\xi Tt)^{-1} \left(1-e^{-\xi Tt} \right)\right)^{\gamma-\delta}  \pi(d\xi)\\
&\quad \leq  C_1 \frac{1}{\gamma - \delta} |\zeta|^{\gamma-\delta} t^{\gamma-\delta} T^{\gamma-\delta-1+\delta/\gamma} k^{\#}(T)^{(-\gamma+\delta)/\gamma}\\
&\quad \leq C_4  |\zeta|^{\gamma-\delta},
\end{align*}
since $T^{\gamma-\delta-1+\delta/\gamma} k^{\#}(T)^{(-\gamma+\delta)/\gamma} \to 0$ as $T\to \infty$, due to $\gamma-\delta - 1 + \delta/\gamma < 0$. If $\gamma \in (1,2)$, then from the inequality $x^{-1}(1-e^{-x})\leq x^{(1-\gamma)/(\gamma- \delta)}$ it follows
\begin{align*}
&C_1 \frac{1}{\gamma - \delta} |\zeta|^{\gamma-\delta} t^{\gamma-\delta} A_T^{-\gamma+\delta} T^{\gamma-\delta}\int_0^\infty \left((\xi Tt)^{-1} \left(1-e^{-\xi Tt} \right)\right)^{\gamma-\delta}  \pi(d\xi)\\
&\qquad \leq C_1 \frac{1}{\gamma - \delta} |\zeta|^{\gamma-\delta} t^{\gamma-\delta} T^{\gamma-\delta-1+\delta/\gamma} k^{\#}(T)^{(-\gamma+\delta)/\gamma} \int_0^\infty (\xi Tt)^{1-\gamma} \pi(d\xi)\\
&\qquad \leq C_1 \frac{1}{\gamma - \delta} |\zeta|^{\gamma-\delta} t^{1-\delta} T^{\delta/\gamma -\delta} k^{\#}(T)^{(-\gamma+\delta)/\gamma} \int_0^\infty \xi^{1-\gamma} \pi(d\xi)\\
&\qquad \leq C_5 |\zeta|^{\gamma-\delta},
\end{align*}
since $T^{\delta/\gamma -\delta} k^{\#}(T)^{(-\gamma+\delta)/\gamma}\to 0$ as $T\to \infty$ and $\int_0^\infty \xi^{1-\gamma} \pi(d\xi) < \infty$ due to \eqref{pifinitemean}. For $\gamma=1$ case we may use the fact that $x^{-1}(1-e^{-x})\leq x^{-\eta/(\gamma-\delta)}$, $\eta>0$, to obtain
\begin{align*}
&C_1 \frac{1}{\gamma - \delta} |\zeta|^{\gamma-\delta} t^{\gamma-\delta} A_T^{-\gamma+\delta} T^{\gamma-\delta}\int_0^\infty \left((\xi Tt)^{-1} \left(1-e^{-\xi Tt} \right)\right)^{\gamma-\delta}  \pi(d\xi)\\
&\qquad \leq C_1 \frac{1}{\gamma - \delta} |\zeta|^{\gamma-\delta} t^{1-\delta-\varepsilon} T^{-\varepsilon} k^{\#}(T)^{(-\gamma+\delta)/\gamma} \int_0^\infty \xi^{-\varepsilon} \pi(d\xi)\\
&\qquad \leq C_6 |\zeta|^{\gamma-\delta}.
\end{align*}

Returning now to the second term \eqref{eq:proof:SRD1Xnew1}, from the inequality $x^{-1}(1-e^{-x})\leq 1$, $x>0$, we get
\begin{align*}
&C_3 |\zeta| t A_T^{-1} T \int_0^\infty (\xi Tt)^{-1} \left(1-e^{-\xi Tt} \right) \1_{\{|\zeta A_T^{-1} \xi^{-1} \left(1-e^{-\xi Tt} \right)| > \varepsilon\}} \pi(d\xi)\\
&\qquad \leq C_3 |\zeta| t A_T^{-1} T \int_0^\infty \1_{\{|\zeta A_T^{-1} \xi^{-1} \left(1-e^{-\xi Tt} \right)| > \varepsilon\}} \pi(d\xi)\\
&\qquad \leq C_3 |\zeta| t A_T^{-1} T \int_0^\infty \1_{\{|\zeta| A_T^{-1} \xi^{-1} > \varepsilon\}}  \pi(d\xi)\\
&\qquad \leq C_3 |\zeta| t A_T^{-1} T \pi \left( \left(0, \varepsilon^{-1} A_T^{-1} |\zeta| \right) \right).
\end{align*}
By \eqref{regvarofp}, for arbitrary $0<\eta<1+\alpha-\gamma$, in some neighborhood of the origin it holds that $\pi \left( \left(0, x \right) \right) \leq C_7 x^{\alpha-\eta}$. Hence we have
\begin{align*}
&C_3 |\zeta| t A_T^{-1} T \int_0^\infty (\xi Tt)^{-1} \left(1-e^{-\xi Tt} \right) \1_{\{|\zeta A_T^{-1} \xi^{-1} \left(1-e^{-\xi Tt} \right)| > \varepsilon\}} \pi(d\xi)\\
&\qquad \leq C_8 |\zeta|^{1+\alpha-\eta} A_T^{-1-\alpha+\eta} T\\
&\qquad = C_8 |\zeta|^{1+\alpha-\eta} T^{1-(1+\alpha)/\gamma + \eta/\gamma}\\
&\qquad \leq C_9 |\zeta|^{1+\alpha-\eta}
\end{align*}
since $1+\alpha>\gamma$. We conclude finally from \eqref{eq:proof:SRD1Xnew11}-\eqref{eq:proof:SRD1Xnew1} that the following bound holds for $\left| \kappa_{\Delta X^*_{1,1}} (A_T^{-1} \zeta, Tt) \right|$
\begin{equation}\label{eq:thmSRD1:twopartsbound1}
\left| \kappa_{\Delta X^*_{1,1}} (A_T^{-1} \zeta, Tt) \right|  \leq C_5 |\zeta|^{\gamma-\delta} + C_9 |\zeta|^{1+\alpha-\eta}
\leq \begin{cases}
C_{10} |\zeta|^{\gamma-\delta}, & \ |\zeta|\leq 1,\\
C_{11} |\zeta|^{1+\alpha-\eta}, & \ |\zeta|> 1.\\
\end{cases}
\end{equation}

\item  We now consider $|\kappa_{\Delta X^*_{1,2}} (A_T^{-1} \zeta, Tt)|$ in \eqref{eq:thmSRD1:twoparts}. Because of \eqref{eq:proof:SRD1X1:ibragimov} we can write
\begin{equation*}
\kappa_{L_1} (\zeta) = \overline{k}(\zeta) \kappa_{\mathcal{S}_\gamma (\gamma^{1/\gamma} \sigma, \rho, 0)}(\zeta),
\end{equation*}
where $\overline{k}$ is slowly varying at zero such that $\overline{k}(\zeta)\sim k(1/\zeta)$ as $\zeta \to 0$ and $\kappa_{\mathcal{S}_\gamma (\gamma^{1/\gamma} \sigma, \rho, 0)}$ is a cumulant function of a stable distribution as in \eqref{cum:stable}. By \cite[Eq.~(34)]{GLT2018LimitInfVar} we have that
\begin{equation}\label{eq:thmDRSX1:proof:2}
\begin{aligned}
&\kappa_{\Delta X^*_{1,2}} (A_T^{-1} \zeta, Tt) = \kappa_{\mathcal{S}_\gamma (\gamma^{1/\gamma} \sigma, \rho, 0)}\left( \zeta \right)\\
&\  \times \int_0^\infty \int_{0}^{t}  \xi^{1-\gamma} \left( 1 - e^{-\xi T (t-s)} \right)^{\gamma} \frac{\overline{k}\left( \left(T k^{\#}(T) \right)^{-1/\gamma} \zeta \xi^{-1} \left( 1 - e^{-\xi T (t-s)} \right) \right)}{k^{\#}(T)}  ds \pi(d\xi).
\end{aligned}
\end{equation}
The definition of $k^{\#}$ implies that \cite[Theorem 1.5.13]{bingham1989regular}
\begin{equation*}
\frac{k^{\#} (T)}{ \overline{k} \left( \left( T k^{\#} (T) \right)^{-1/\gamma} \right)} \sim \frac{k^{\#} (T)}{ k \left( \left( T k^{\#} (T) \right)^{1/\gamma} \right)}  \to 1, \quad \text{ as } T\to \infty,
\end{equation*}
and due to slow variation of $\overline{k}$, for any $\zeta\in \R$, $\xi>0$ and $s\in (0,t)$, as $T\to \infty$
\begin{equation}\label{e:thmSRD1:proof1}
\begin{aligned}
&\frac{k^{\#} (T)}{\overline{k}\left( \left(T k^{\#}(T) \right)^{-1/\gamma} \zeta \xi^{-1} \left( 1 - e^{-\xi T (t-s)} \right) \right)} =\\
&\qquad \frac{\overline{k} \left( \left( T k^{\#} (T) \right)^{-1/\gamma} \right) }{\overline{k}\left( \left(T k^{\#}(T) \right)^{-1/\gamma} \zeta \xi^{-1} \left( 1 - e^{-\xi T (t-s)} \right) \right)}  \frac{k^{\#} (T)}{ \overline{k} \left( \left( T k^{\#} (T) \right)^{-1/\gamma} \right)} \to 1.
\end{aligned}
\end{equation}
By using Potter's bounds (see \cite[Theorem 1.5.6]{bingham1989regular}), we have from \eqref{e:thmSRD1:proof1} that for any $\varepsilon>0$
\begin{align*}
&\frac{\overline{k}\left( \left(T k^{\#}(T) \right)^{-1/\gamma} \zeta \xi^{-1} \left( 1 - e^{-\xi T (t-s)} \right) \right)}{k^{\#}(T)}\\
&\qquad \leq C_{12} \max \left\{ \zeta^\varepsilon \xi^{-\varepsilon} \left( 1 - e^{-\xi T (t-s)} \right)^\varepsilon , \zeta^{-\varepsilon} \xi^{\varepsilon} \left( 1 - e^{-\xi T (t-s)} \right)^{-\varepsilon}\right\}\\
&\qquad \leq C_{12} \left( 1 - e^{-\xi T (t-s)} \right)^{-\varepsilon} \max \left\{ \xi^{-\varepsilon}, \xi^\varepsilon \right\}  \max \left\{ \zeta^{-\varepsilon}, \zeta^\varepsilon \right\},
\end{align*}
for $T$ large enough. By taking $\varepsilon< \gamma$ we get
\begin{align*}
\xi^{1-\gamma} &\left( 1 - e^{-\xi T (t-s)} \right)^{\gamma} \frac{\overline{k}\left( \left(T k^{\#}(T) \right)^{-1/\gamma} \zeta \xi^{-1} \left( 1 - e^{-\xi T (t-s)} \right) \right)}{k^{\#}(T)}\\
&\leq  C_{12} \xi^{1-\gamma} \left( 1 - e^{-\xi T (t-s)} \right)^{\gamma-\varepsilon} \max \left\{ \xi^{-\varepsilon}, \xi^\varepsilon \right\} \max \left\{ \zeta^{-\varepsilon}, \zeta^\varepsilon \right\}\\
&\leq  C_{12} \xi^{1-\gamma} \max \left\{ \xi^{-\varepsilon}, \xi^\varepsilon \right\} \max \left\{ \zeta^{-\varepsilon}, \zeta^\varepsilon \right\}.
\end{align*}
Since $\gamma<1+\alpha$ and \eqref{pifinitemean} holds, we have
\begin{align*}
\int_0^\infty \int_{0}^{t}  \xi^{1-\gamma}  \max \left\{ \xi^{-\varepsilon}, \xi^\varepsilon \right\} ds \pi(d\xi) = t \int_0^1  \xi^{1-\gamma-\varepsilon} \pi(d\xi) + t \int_1^\infty \xi^{1-\gamma+\varepsilon} \pi(d\xi) < \infty.
\end{align*}
We finally conclude from \eqref{eq:thmDRSX1:proof:2} that
\begin{equation}\label{eq:thmSRD1:twopartsbound2}
\left|\kappa_{\Delta X^*_{1,2}} (A_T^{-1} \zeta, Tt) \right| \leq C_{13}  \left| \kappa_{\mathcal{S}_\gamma (\gamma^{1/\gamma} \sigma, \rho, 0)}\left( \zeta \right) \right| \max \left\{ \zeta^{-\varepsilon}, \zeta^\varepsilon \right\} \leq C_{14} |\zeta|^{\gamma} \max \left\{ \zeta^{-\varepsilon}, \zeta^\varepsilon \right\}.
\end{equation}

\item We shall now put the bounds for the terms in \eqref{eq:thmSRD1:twoparts} together. By using \eqref{eq:thmSRD1:twopartsbound1} and \eqref{eq:thmSRD1:twopartsbound2} one has from \eqref{eq:thmSRD1:twoparts} that
\begin{equation*}
| \kappa_{X_1^*} (A_T^{-1} \zeta, Tt) | \leq \begin{cases}
C_{10} |\zeta|^{\gamma-\delta} + C_{14} |\zeta|^{\gamma-\varepsilon}, & \ |\zeta|\leq 1,\\
C_{11} |\zeta|^{1+\alpha-\eta} + C_{14} |\zeta|^{\gamma+\varepsilon}, & \ |\zeta|> 1.\\
\end{cases}
\end{equation*}
Since $\gamma<1+\alpha$ and $\varepsilon$, $\delta$ and $\eta$ are arbitrary, we may choose them so that $\varepsilon<\delta<\gamma-q$ and $1+\alpha-\eta>\gamma+\varepsilon$, hence
\begin{equation}\label{e:kappabound11}
| \kappa_{X_1^*} (A_T^{-1} \zeta, Tt) | \leq \begin{cases}
C_{15} |\zeta|^{\gamma-\delta}, & \ |\zeta|\leq 1,\\
C_{16} |\zeta|^{1+\alpha-\eta}, & \ |\zeta|> 1.\\
\end{cases}
\end{equation}
This completes the derivation of the bound for $| \kappa_{X_1^*} (A_T^{-1} \zeta, Tt) |$

\item We now turn to \eqref{momSLPcase111} to get a bound for the moment $\E \left| A_T^{-1} \widetilde{X_1^*}(Tt)\right|^q$. We use \eqref{momSLPcase111}, \eqref{e:kappabound11} and
\begin{equation}\label{momSLPcase1112}
|\exp \kappa_{X_1^*} (A_T^{-1} \zeta, Tt)|^2 = \exp \{ 2 \Re \kappa_{X_1^*} (A_T^{-1} \zeta, Tt) \} \geq  \exp \{ - 2 |\kappa_{X_1^*} (A_T^{-1} \zeta, Tt) | \} ,
\end{equation}
and get
\begin{align*}
\E \left| A_T^{-1} \widetilde{X_1^*}(Tt)\right|^q &\leq k_q \int_{-\infty}^{\infty} \left( 1- \exp  \{ - 2 |\kappa_{X_1^*} (A_T^{-1} \zeta, Tt) | \} \right) |\zeta|^{-q-1} d \zeta\\
&\leq k_q \int_{|\zeta|\leq 1} \left( 1- \exp  \{ - 2 C_{15} \left|\zeta \right|^{\gamma - \delta} \} \right) |\zeta|^{-q-1} d \zeta\\
&\qquad \qquad + k_q \int_{|\zeta|>1} \left( 1- \exp  \{ - 2 C_{16} \left|\zeta \right|^{1+\alpha - \eta} \} \right) |\zeta|^{-q-1} d \zeta\\
&\leq k_q \int_{-\infty}^{\infty} \left( 1- \exp  \{ - 2 C_{15} \left|\zeta \right|^{\gamma - \delta} \} \right) |\zeta|^{-q-1} d \zeta\\
&\qquad \qquad + k_q \int_{-\infty}^{\infty} \left( 1- \exp  \{ - 2 C_{16} \left|\zeta \right|^{1+\alpha - \eta} \} \right) |\zeta|^{-q-1} d \zeta.
\end{align*}
By \eqref{e:thm:vonbahr2}, the terms on the right-hand side are $q$-th absolute moments of $(\gamma - \delta)$-stable and $(1+\alpha-\eta)$-stable random variables  with characteristic functions $\exp  \{ - 2 C_{15} \left|\zeta \right|^{\gamma - \delta} \}$ and $\exp  \{ - 2 C_{16} \left|\zeta \right|^{1+\alpha - \eta} \}$, respectively. Since $q<\gamma - \delta$ and $q<1+\alpha-\eta$, both integrals are finite. We conclude that the moment of the symmetrized integrated process is uniformly bounded. We now show this applies to the non-symmetrized process as well.

If $\gamma>1$, we may assume that $q>1$ and from \eqref{symmetricbound:geq1} we have
\begin{equation*}
\E \left| A_T^{-1} X_1^*(Tt)\right|^q \leq \E \left| A_T^{-1} \widetilde{X_1^*}(Tt)\right|^q.
\end{equation*}
If $\gamma\leq 1$, then from \eqref{symmetricbound:geq1}
\begin{equation*}
\E \left| A_T^{-1} X_1^*(Tt)\right|^q \leq \E \left| A_T^{-1} \widetilde{X_1^*}(Tt)\right|^q + 2 | \med (A_T^{-1} X_1^*(Tt))|^q.
\end{equation*}
Since $\{A_T^{-1} X_1^*(Tt) \}$ converges in distribution, the median $\med (A_T^{-1} X_1^*(Tt))$ also converges (see e.g.~\cite[Lemma 21.2]{van2000asymptotic}), hence we can bound the second term on the right. This completes the proof of uniform integrability of $\{|A_T^{-1} X_1^*(Tt)|^q \}$, hence the convergence of moments. Since the limiting process is $1/\gamma$-self-similar, from \cite[Theorem 1]{GLST2017Arxiv} we conclude that \begin{equation*}
\tau_{X_1^*}(q)=\frac{1}{\gamma}q, \ \text{ for } q<\gamma.
\end{equation*}
\end{itemize}
\end{proof}

For $\gamma>1+\alpha$ we have the following. 

\begin{lemma}\label{lemma:X1:2}
If Assumption \ref{assum} holds and $\gamma>1+\alpha$, then
\begin{equation}\label{e:tauX1case2}
\tau_{X_1^*}(q) \begin{cases}
=\frac{1}{1+\alpha} q, & 0<q\leq 1+\alpha,\\
\leq q-\alpha, & 1+\alpha < q < \gamma.
\end{cases}
\end{equation}
\end{lemma}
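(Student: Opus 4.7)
I would split according to whether $q$ lies below or above the stability index $1+\alpha$ of the limit process $L_{1+\alpha}$ in \eqref{e:X1*limitLRD}. For $q<1+\alpha$, the limit has finite $q$-th absolute moment, and the goal is the equality $\tau_{X_1^*}(q)=q/(1+\alpha)$, obtained through uniform integrability in \eqref{e:X1*limitLRD}, exactly as in Lemma \ref{lemma:X1:1}. For $q>1+\alpha$, $\E|L_{1+\alpha}(t)|^q=\infty$ so the limit theorem is useless, and I must estimate moments directly to get the upper bound $\tau_{X_1^*}(q)\leq q-\alpha$. The boundary $q=1+\alpha$ is then handled by convexity.

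In the range $0<q<1+\alpha$, I would use the same Fubini decomposition \eqref{e:thmSRD1:decomposition} of $X_1^*(Tt)$ with the new normalization $A_T=T^{1/(1+\alpha)}\ell^{\#}(T)^{1/(1+\alpha)}$, apply the symmetrization inequalities \eqref{symmetricbound:geq1}--\eqref{symmetricbound:leq1} together with \eqref{e:thm:vonbahr2}, and derive a two-piece bound on $|\kappa_{X_1^*}(A_T^{-1}\zeta,Tt)|$ analogous to \eqref{e:kappabound11}. The essential difference is that the roles of the exponents $\gamma$ and $1+\alpha$ swap: since now $1+\alpha<\gamma$, it is the regular variation of $\pi$ near the origin with exponent $\alpha$ that provides the binding bound at small $|\zeta|$, giving $|\kappa_{X_1^*}(A_T^{-1}\zeta,Tt)|\leq C_1|\zeta|^{1+\alpha-\eta}$ on $|\zeta|\leq 1$, while the asymptotics \eqref{eq:proof:SRD1X1:ibragimov} control large $|\zeta|$ with $C_2|\zeta|^{\gamma-\delta}$. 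Choosing $q<1+\alpha-\eta$, substitution into \eqref{e:thm:vonbahr2} makes the integral finite uniformly in $T$; the tightness of medians from \eqref{e:X1*limitLRD} closes the passage from symmetrized to non-symmetrized moments, and $1/(1+\alpha)$-self-similarity of $L_{1+\alpha}$ with \cite[Theorem 1]{GLST2017Arxiv} yields $\tau_{X_1^*}(q)=q/(1+\alpha)$.

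In the range $1+\alpha<q<\gamma$, I would work with \eqref{e:thm:vonbahr2} without normalization. The cumulant $\kappa_{X_1^*}(\zeta,Tt)$ obeys essentially the unnormalized version of the previous bound for small $|\zeta|$, while for large $|\zeta|$ one exploits the uniform bound $|\kappa_{L_1}(\zeta)|\leq 2\mu_1(\R)+|a\zeta|$ coupled with the estimate $\pi((0,1/|\zeta|))\sim C|\zeta|^{-\alpha}\ell(|\zeta|)$ to obtain, after the change of variables used in the proof of Lemma \ref{lemma:Fubini}, a bound of the form $O(T|\zeta|^{\alpha}\,\text{slow}(|\zeta|))$ reflecting that the inner $\pi$-integral concentrates on a set of $\pi$-measure $O(|\zeta|^{\alpha})$ when $|\zeta|$ is large. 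Splitting the $\zeta$-integral at the transition scale $|\zeta|\sim T^{-1/(1+\alpha)}$, the small-$|\zeta|$ contribution is of order $T^{q/(1+\alpha)}$ (up to slow variation) and the large-$|\zeta|$ contribution is of order $T^{q-\alpha}$; since $q>1+\alpha$ the latter dominates. The symmetrization inequalities transfer the bound to $X_1^*$ itself and yield $\tau_{X_1^*}(q)\leq q-\alpha$. Finally, log-convexity of $q\mapsto\E|X_1^*(T)|^q$ makes $\tau_{X_1^*}$ convex in $q$, which pins the boundary value $\tau_{X_1^*}(1+\alpha)=1$ between the two matching one-sided pieces.

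The main obstacle is the direct moment estimate in the super-stable range: identifying the correct $T$-dependent transition scale in $|\zeta|$, bounding the large-$|\zeta|$ contribution where $\kappa_{L_1}$ saturates, and carefully tracking the slowly varying factors $\ell$ and $k$ from \eqref{regvarofp} and \eqref{regvarofL} so that the exponent $q-\alpha$ emerges cleanly in the logarithmic limit and is not spoiled by the slowly varying corrections.
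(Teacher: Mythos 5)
Your treatment of the range $0<q<1+\alpha$ is essentially the paper's argument: the same Fubini decomposition into $\Delta X^*_{1,1}+\Delta X^*_{1,2}$, the same normalization $A_T=T^{1/(1+\alpha)}\ell^{\#}(T)^{1/(1+\alpha)}$, a two-piece cumulant bound with exponent $1+\alpha\mp\eta$ fed into \eqref{e:thm:vonbahr2}, uniform integrability, self-similarity of the limit, and convexity to reach $q=1+\alpha$. (Two small remarks: the binding exponent at small $|\zeta|$ comes from Potter bounds on the slowly varying density of $\pi$ together with the global bound $|\kappa_{L_1}(\zeta)|\leq C|\zeta|^{1+\alpha-\varepsilon}$, not directly from \eqref{eq:proof:SRD1X1:ibragimov}; and since $\gamma>1+\alpha>1$ the mean is finite, so one may simply assume $q>1$ and skip the median argument. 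Neither affects correctness.)

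The range $1+\alpha<q<\gamma$ contains a genuine gap: the two estimates you propose around the transition scale $|\zeta|\sim T^{-1/(1+\alpha)}$ do not hold together and would not produce the exponent $q-\alpha$. On the small-$|\zeta|$ side, the only bound you have carried over is $|\kappa_{X_1^*}(\zeta,T)|\leq C\bigl(A_T|\zeta|\bigr)^{1+\alpha-\eta}$; integrating this against $|\zeta|^{-q-1}$ with $q>1+\alpha$ gives $\int_0 |\zeta|^{\alpha-\eta-q}\,d\zeta=\infty$, so the small-$|\zeta|$ piece is not ``of order $T^{q/(1+\alpha)}$'' --- it is divergent unless you improve the exponent near the origin to something strictly larger than $q$. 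On the large-$|\zeta|$ side, once the cumulant saturates you can only use $1-\Re\exp\kappa\leq 1$, and $\int_{|\zeta|>T^{-1/(1+\alpha)}}|\zeta|^{-q-1}d\zeta\asymp T^{q/(1+\alpha)}$, which is \emph{smaller} than $T^{q-\alpha}$, not the dominant term you claim; moreover your intermediate bound ``$O(T|\zeta|^{\alpha}\cdot\text{slow})$'' is inconsistent with the fact $\pi((0,1/|\zeta|))\sim C|\zeta|^{-\alpha}\ell(|\zeta|)$ you invoke to derive it. In effect you have the roles of the two regions reversed: for $q>1+\alpha$ the moment is governed by the far tail of $X_1^*(T)$, i.e.\ by $\zeta$ near the origin, where the decisive input is the $\gamma$-regular variation of $\mu_1$, not the behavior of $\pi$. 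The paper's fix is to avoid splitting the $\zeta$-integral altogether: one uses the single global bound $|\kappa_{L_1}(\zeta)|\leq C|\zeta|^{q+\delta}$ with $q<q+\delta<\gamma$ (available because $L_1(1)$ has regularly varying tails of index $\gamma>q$), normalizes by $A_T=T^{1-\alpha/q+\varepsilon/q}$, and checks via Potter's bounds that the resulting power of $T$, namely $(\alpha/q-\varepsilon/q)(q+\delta)-\alpha+\eta$, is negative; then $|\kappa_{A_T^{-1}X_1^*(T)}(\zeta)|\leq C|\zeta|^{q+\delta}$ uniformly in $T$ and \eqref{e:thm:vonbahr2} yields a finite $(q+\delta)$-stable moment, giving $\tau_{X_1^*}(q)\leq q-\alpha+\varepsilon$. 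Without this ingredient your sketch does not close.
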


\begin{proof}
We first consider the case $q<1+\alpha$. The proof is similar to the proof of Lemma \ref{lemma:X1:1}. We will prove that $\{|A_T^{-1} X_1^*(Tt)|^q\}$ is uniformly integrable where now $A_T=T^{1/(1+\alpha)} \ell^{\#}\left(T \right)^{1/(1+\alpha)}$. We can assume $q>1$. From \eqref{symmetricbound:geq1}, \eqref{momSLPcase111} and \eqref{momSLPcase1112} it follows that
\begin{equation}\label{e:proofX1case2:1}
\begin{aligned}
\E \left| A_T^{-1} X_1^*(Tt)\right|^q &\leq \E \left| A_T^{-1} \widetilde{X_1^*}(Tt)\right|^q\\
&\leq k_q \int_{-\infty}^{\infty} \left( 1- \exp  \{ - 2 |\kappa_{X_1^*} (A_T^{-1} \zeta, Tt) | \} \right) |\zeta|^{-q-1} d \zeta.
\end{aligned}
\end{equation}
We now derive bound for $|\kappa_{X_1^*} (A_T^{-1} \zeta, Tt) |$. Again we use the decomposition \eqref{e:thmSRD1:decomposition} and bound $| \kappa_{\Delta X^*_{1,1}} (A_T^{-1} \zeta, Tt) |$ and $|\kappa_{\Delta X^*_{1,2}} (A_T^{-1} \zeta, Tt)|$ separately.

\begin{itemize}[leftmargin=*]
\item We consider first $\kappa_{\Delta X^*_{1,1}} (A_T^{-1} \zeta, Tt)$. From \eqref{eq:proof:SRD1X1:bound} we also have the following bound for $\varepsilon<1+\alpha-q$
\begin{equation*}
|\kappa_{L_1}(\zeta) | \leq C_1 |\zeta|^{1+\alpha-\varepsilon},
\end{equation*}
and by using Potter's bounds we have for $0<\delta<\varepsilon \alpha/(1+\alpha)$
\begin{equation*}
\widetilde{\ell}(T \xi^{-1}) = \frac{\widetilde{\ell}(T \xi^{-1})}{\widetilde{\ell}( \xi^{-1})} \widetilde{\ell}(\xi^{-1}) \leq C_2 \max \left\{ T^{-\delta}, T^{\delta} \right\} \widetilde{\ell}(\xi^{-1}).
\end{equation*}
By \eqref{regvarofp}, we can write the density $p$ of $\pi$ in the form $p(x)=\alpha \widetilde{\ell}(x^{-1}) x^{\alpha-1}$ with $\widetilde{\ell}$ slowly varying at infinity such that $\widetilde{\ell}(t) \sim \ell(t)$ as $t \to \infty$. Hence from \eqref{eq:proof:SRD1X1} we have
\begin{align*}
&\kappa_{\Delta X^*_{1,1}} (A_T^{-1} \zeta, Tt)  = \int_0^\infty \int_{-\infty}^{0} \kappa_{L_1} \left( \zeta A_T^{-1} T e^s \xi^{-1} \left(1-e^{-\xi t} \right)\right) ds \pi(T^{-1} d\xi)\\
&\quad = \int_0^\infty \int_{-\infty}^{0} \kappa_{L_1} \left( \zeta A_T^{-1} T e^s \xi^{-1} \left(1-e^{-\xi t} \right)\right)  \alpha \widetilde{\ell}(T \xi^{-1}) \xi^{\alpha-1} T^{-\alpha} ds d\xi.
\end{align*}
and
\begin{align}
\left| \kappa_{\Delta X^*_{1,1}} (A_T^{-1} \zeta, Tt) \right| & \leq C_3 |\zeta|^{1+\alpha-\varepsilon} T^{-(1+\alpha-\varepsilon)/(1+\alpha) + 1+\alpha-\varepsilon - \alpha + \delta}  \ell^{\#}\left(T \right)^{-1/(1+\alpha)} \nonumber\\
&\hspace{1cm} \times \int_0^\infty \int_{-\infty}^{0} e^{s} \left(\xi^{-1} \left(1-e^{-\xi t} \right) \right)^{1+\alpha-\varepsilon} \widetilde{\ell}(\xi^{-1}) \xi^{\alpha-1} ds d\xi \nonumber\\
&\ \leq C_3  |\zeta|^{1+\alpha-\varepsilon}  T^{-\varepsilon \alpha/(1+\alpha)+\delta}  \ell^{\#}\left(T \right)^{-1/(1+\alpha)} \int_0^\infty \widetilde{\ell}(\xi^{-1}) \xi^{\alpha-1} d\xi\nonumber\\
&\leq C_4 |\zeta|^{1+\alpha-\varepsilon}.\label{e:proofX1case2:4}
\end{align}

\item We consider now  $|\kappa_{\Delta X^*_{1,2}} (A_T^{-1} \zeta, Tt)|$. Analogous to \eqref{eq:proof:SRD1X1} we obtain
\begin{align*}
\kappa_{\Delta X^*_{1,2}} (A_T^{-1} \zeta, Tt) &= \int_0^\infty \int_{0}^{t} \kappa_{L_1} \left( \zeta A_T^{-1} \xi^{-1} \left( 1 - e^{-\xi T (t-s)} \right) \right) \xi T ds \pi(d\xi)\nonumber\\
&= \int_0^\infty \int_{0}^{t} \kappa_{L_1} \left( \zeta A_T^{-1} \xi^{-1} \left( 1 - e^{-\xi T (t-s)} \right) \right) \alpha \widetilde{\ell}(\xi^{-1}) \xi^{\alpha} T ds d\xi. 
\end{align*}
We shall assume that $\zeta>0$, the other case is similar. The change of variables $x=\zeta A_T^{-1} \xi^{-1}$ yields
\begin{equation}\label{e:proofX1case2:2}
\begin{aligned}
&\kappa_{\Delta X^*_{1,2}} (A_T^{-1} \zeta, Tt)\\
&= \zeta^{1+\alpha} \int_0^\infty \int_{0}^{t} \kappa_{L_1} \left( x \left( 1 - g_T(\zeta, x,s) \right) \right)  A_T^{-(1+\alpha)} T \widetilde{\ell}\left(A_T x \zeta^{-1}\right) \alpha x^{-\alpha-2} ds dx\\
&= \zeta^{1+\alpha} \int_0^\infty \int_{0}^{t} \kappa_{L_1} \left( x \left( 1 - g_T(\zeta, x,s) \right) \right) \frac{\widetilde{\ell}\left( T^{1/(1+\alpha)} \ell^{\#}\left(T \right)^{1/(1+\alpha)} x \zeta^{-1} \right) }{\ell^{\#} \left(T\right)} \alpha x^{-\alpha-2} ds dx,
\end{aligned}
\end{equation}
where $g_T(\zeta, x,s)=e^{-x^{-1} \frac{\zeta T}{A_T} (t-s)}$. From Potter's bounds, for $0<\eta<\min \left\{ \gamma-1-\alpha,\alpha \right\}$ there is $C_1$ such that
\begin{equation*}
\frac{\widetilde{\ell}\left( T^{1/(1+\alpha)} \ell^{\#}\left(T \right)^{1/(1+\alpha)} x \zeta^{-1} \right) }{\ell\left( T^{1/(1+\alpha)} \ell^{\#}\left(T \right)^{1/(1+\alpha)}\right)} \leq C_1 \max \left\{ x^{-\eta} \zeta^{\eta}, x^{\eta} \zeta^{-\eta} \right\}.
\end{equation*}
and by the definition of de Bruijn conjugate \cite[Theorem 1.5.13]{bingham1989regular}
\begin{equation*}
\frac{\ell^{\#} \left(T\right)}{\ell\left( \left( T\ell^{\#}\left(T \right) \right)^{1/(1+\alpha)} \right) } \sim 1, \text{  as } T\to \infty.
\end{equation*}
Hence, for $T$ large enough
\begin{equation*}
\frac{\widetilde{\ell}\left( T^{1/(1+\alpha)} \ell^{\#}\left(T \right)^{1/(1+\alpha)} x \zeta^{-1} \right) }{\ell^{\#} \left(T\right)} \leq C_2 \max \left\{ x^{-\eta} \zeta^{\eta}, x^{\eta} \zeta^{-\eta} \right\},
\end{equation*}
and by inserting this in \eqref{e:proofX1case2:2} we get
\begin{align*}
&\left| \kappa_{\Delta X^*_{1,2}} (A_T^{-1} \zeta, Tt)\right|\\
&\quad \leq \alpha C_2 \zeta^{1+\alpha} \max \left\{\zeta^{\eta}, \zeta^{-\eta} \right\}\\
&\hspace{1cm} \times \int_0^\infty \int_{0}^{t} \left| \kappa_{L_1} \left( x \left( 1 - g_T(\zeta, x,s) \right) \right) \right| \max \left\{ x^{-\eta}, x^{\eta}\right\}x^{-\alpha-2} ds dx.
\end{align*}
Now we use the bound \eqref{eq:proof:SRD1X1:bound} valid for arbitrary $\delta>0$ to obtain
\begin{equation}\label{e:proofX1case2:3}
\begin{aligned}
\left| \kappa_{\Delta X^*_{1,2}} (A_T^{-1} \zeta, Tt)\right| &\leq C_3 \zeta^{1+\alpha} \max \left\{\zeta^{\eta}, \zeta^{-\eta} \right\} \int_0^\infty \int_{0}^{t} \left( x \left( 1 - g_T(\zeta, x,s) \right) \right)^{\gamma-\delta}\\
&\quad \times \1_{\{x \left( 1 - g_T(\zeta, x,s)\right) \leq \varepsilon\}} \max \left\{ x^{-\eta}, x^{\eta}\right\}x^{-\alpha-2} ds dx\\
&+ C_4 \zeta^{1+\alpha} \max \left\{\zeta^{\eta}, \zeta^{-\eta} \right\} \int_0^\infty \int_{0}^{t} x \left( 1 - g_T(\zeta, x,s) \right)\\
&\quad \times \1_{\{x \left( 1 - g_T(\zeta, x,s)\right) > \varepsilon\}} \max \left\{ x^{-\eta}, x^{\eta}\right\}x^{-\alpha-2} ds dx\\
&=: I_1 + I_2.
\end{aligned}
\end{equation}
We consider each term separately.

\begin{itemize}[label=$\circ$]
\item For $I_{1}$ we make change of variables $y=x \left( 1 - g_T(\zeta, x,s) \right)$ and get
\begin{align*}
I_{1} &= C_3 \zeta^{1+\alpha} \max \left\{\zeta^{\eta}, \zeta^{-\eta} \right\} \int_0^\infty \int_{0}^{t} y^{\gamma-\delta} \1_{\{y\leq \varepsilon\}}\\
&\quad \times \max \left\{ y^{-\eta}  \left( 1 - g_T(\zeta, x,s) \right)^{\eta}, y^{\eta} \left( 1 - g_T(\zeta, x,s) \right)^{-\eta}\right\} y^{-\alpha-2}\\
&\qquad \qquad \times \left( 1 - g_T(\zeta, x,s) \right)^{\alpha+1} ds dy\\
&\leq C_3 \zeta^{1+\alpha} \max \left\{\zeta^{\eta}, \zeta^{-\eta} \right\} \int_0^\infty \int_{0}^{t} y^{\gamma-\alpha-2-\delta-\eta} \1_{\{y\leq \varepsilon\}}(y) \\
&\qquad \qquad \times \left( 1 - g_T(\zeta, x,s) \right)^{\alpha+1-\eta} ds dy\\
&\leq C_5 \zeta^{1+\alpha} \max \left\{\zeta^{\eta}, \zeta^{-\eta} \right\} \int_0^\varepsilon y^{\gamma-\alpha-2-\delta-\eta} dy\\
&= C_6 \zeta^{1+\alpha} \max \left\{\zeta^{\eta}, \zeta^{-\eta} \right\},
\end{align*}
where we have used the fact that the integral in the last line is finite due to $\gamma>1+\alpha$ and the choice of $\eta$ and $\delta$.

\item Consider now $I_2$. Since $x \left( 1 - g_T(\zeta, x,s)\right) > \varepsilon$ implies $x>\varepsilon$, we have for $I_2$,
\begin{align*}
I_{2} &\leq C_7 \zeta^{1+\alpha} \max \left\{\zeta^{\eta}, \zeta^{-\eta} \right\} \int_0^\infty \int_{0}^{t} x^{-\alpha-1} \1_{\{x > \varepsilon\}} \max \left\{ x^{-\eta}, x^{\eta}\right\} ds dx\\
&\leq C_8 \zeta^{1+\alpha} \max \left\{\zeta^{\eta}, \zeta^{-\eta} \right\} \int_{\varepsilon}^\infty x^{-\alpha-1+\eta} dx\\
&= C_9 \zeta^{1+\alpha} \max \left\{\zeta^{\eta}, \zeta^{-\eta} \right\}.
\end{align*}
\end{itemize}

Returning back to \eqref{e:proofX1case2:3} we conclude that
\begin{equation}\label{e:proofX1case2:5}
\left| \kappa_{\Delta X^*_{1,2}} (A_T^{-1} \zeta, Tt)\right| \leq C_{10} \zeta^{1+\alpha} \max \left\{\zeta^{\eta}, \zeta^{-\eta} \right\}.
\end{equation}

\medskip

From \eqref{eq:thmSRD1:twoparts}, \eqref{e:proofX1case2:4} and \eqref{e:proofX1case2:5} we get the bound for $|\kappa_{X_1^*} (A_T^{-1} \zeta, Tt) |$. Namely, for $\varepsilon>0$ and $\eta>0$ arbitrary small there are constants $C_1, C_2 >0$ such that
\begin{equation*}
|\kappa_{X_1^*} (A_T^{-1} \zeta, Tt) | \leq \begin{cases}
C_1 |\zeta|^{1+\alpha-\varepsilon} + C_2 |\zeta|^{1+\alpha-\eta}, & |\zeta|\leq 1,\\
C_1 |\zeta|^{1+\alpha-\varepsilon} + C_2 |\zeta|^{1+\alpha+\eta}, & |\zeta|> 1.
\end{cases}
\end{equation*}
Assuming e.g.~that $\varepsilon<\eta$ we have
\begin{equation*}
|\kappa_{X_1^*} (A_T^{-1} \zeta, Tt) | \leq \begin{cases}
C_3 |\zeta|^{1+\alpha-\eta}, & |\zeta|\leq 1,\\
C_4 |\zeta|^{1+\alpha+\eta}, & |\zeta|> 1.
\end{cases}
\end{equation*}
We use this to get the bound for the $q$-th absolute moment as in the proof of Lemma \ref{lemma:X1:1}. It follows from \eqref{e:proofX1case2:1} that
\begin{align*}
\E \left| A_T^{-1} X_1^*(Tt)\right|^q &\leq k_q \int_{|\zeta|\leq 1} \left( 1- \exp  \{ - 2 C_3 |\zeta|^{1+\alpha-\eta} \} \right) |\zeta|^{-q-1} d \zeta\\
&\qquad \qquad + k_q \int_{|\zeta|> 1} \left( 1- \exp  \{ - 2 C_4 |\zeta|^{1+\alpha+\eta} \} \right) |\zeta|^{-q-1} d \zeta\\
&\leq k_q \int_{-\infty}^\infty \left( 1- \exp  \{ - 2 C_3 |\zeta|^{1+\alpha-\eta} \} \right) |\zeta|^{-q-1} d \zeta\\
&\qquad \qquad + k_q \int_{-\infty}^\infty \left( 1- \exp  \{ - 2 C_4 |\zeta|^{1+\alpha+\eta} \} \right) |\zeta|^{-q-1} d \zeta.
\end{align*}
The terms on the right-hand side are $q$-th absolute moments of $(1+\alpha-\eta)$-stable and $(1+\alpha+\eta)$-stable random variables  with characteristic functions $\exp  \{ - 2 C_3 |\zeta|^{1+\alpha-\eta} \}$ and $\exp  \{ - 2 C_4 |\zeta|^{1+\alpha+\eta} \}$, respectively. We are considering the case $q<1+\alpha$, hence these moments are finite if we choose $\eta$ small enough. Hence, $\{|A_T^{-1} X_1^*(Tt)|^q \}$ is uniformly integrable, the moments converge and from \cite[Theorem 1]{GLST2017Arxiv} we have that $\tau_{X_1^*}(q)=q/(1+\alpha)$ for $q<1+\alpha$. Since the scaling function is convex (see e.g.~\cite{GLST2016JSP}), hence continuous, we obtain
\begin{equation*}
\tau_{X_1^*}(q)=\frac{1}{1+\alpha} q, \quad  \text{ for } q\leq 1+\alpha.
\end{equation*}

\item We now turn to the case $1+\alpha<q<\gamma$ in Lemma \ref{lemma:X1:2}. We will show that for arbitrary $\varepsilon>0$
\begin{equation}\label{e:proof:X1:case2:toprove}
\E \left| T^{-1+\frac{\alpha}{q}-\frac{\varepsilon}{q}} X_1^*(T)\right|^q \leq C,
\end{equation}
for some constant $C>0$ and $T$ large enough. This implies that $\tau_{X_1^*}(q)\leq q-\alpha+\varepsilon$ and completes the proof since $\varepsilon$ is arbitrary. To show \eqref{e:proof:X1:case2:toprove}, we will use \eqref{e:proofX1case2:1} with $A_T=T^{1-\alpha/q+\varepsilon/q}$. First, by \eqref{e:thmSRD1:decomposition} and \eqref{integrationrule}, we may express the cumulant function of $X^*_1(T)$ as
\begin{align*}
\kappa_{A_T^{-1}X_1^*(T)}(\zeta) &= \int_0^\infty \int_{-\infty}^0 \kappa_{L_1} \left( A_T^{-1} \zeta \xi^{-1} e^{\xi s} \left(1 - e^{-\xi T} \right)  \right) ds \xi \pi(d\xi)\\
&\qquad  + \int_0^\infty \int_{0}^T \kappa_L \left( A_T^{-1} \zeta \xi^{-1} \left(1 - e^{-\xi (T - s)} \right)  \right) ds \xi \pi(d\xi).
\end{align*}
Making a change of variables and writing $p(x)=\alpha \widetilde{\ell}(x^{-1}) x^{\alpha-1}$, with $\widetilde{\ell}(t) \sim \ell(t)$ as $t \to \infty$, yields
\begin{align*}
\kappa_{A_T^{-1}X_1^*(T)}(\zeta) &= \int_0^\infty \int_{-\infty}^0 \kappa_{L_1} \left( T^{\frac{\alpha}{q}-\frac{\varepsilon}{q}} \zeta x^{-1} e^{x \frac{s}{T}} \left(1 - e^{-x} \right)  \right) ds x T^{-1} \pi(T^{-1} dx)\\
&\qquad  + \int_0^\infty \int_{0}^T \kappa_L \left( T^{\frac{\alpha}{q}-\frac{\varepsilon}{q}}  \zeta x^{-1} \left(1 - e^{-x\left(1- \frac{s}{T}\right)} \right)  \right) ds x T^{-1} \pi(T^{-1} dx)\\
&= \int_0^\infty \int_{-\infty}^0 \kappa_{L_1} \left( T^{\frac{\alpha}{q}-\frac{\varepsilon}{q}} \zeta x^{-1} e^{x u} \left(1 - e^{-x} \right)  \right) du x \pi(T^{-1} dx)\\
&\qquad  + \int_0^\infty \int_{0}^1 \kappa_L \left( T^{\frac{\alpha}{q}-\frac{\varepsilon}{q}}  \zeta x^{-1} \left(1 - e^{-x u} \right)  \right) du x \pi(T^{-1} dx)\\
&= \int_0^\infty \int_{-\infty}^0 \kappa_{L_1} \left( T^{\frac{\alpha}{q}-\frac{\varepsilon}{q}} \zeta x^{-1} e^{x u} \left(1 - e^{-x} \right)  \right) du \alpha \widetilde{\ell}(Tx^{-1}) x^{\alpha} T^{-\alpha} dx\\
&\qquad  + \int_0^\infty \int_{0}^1 \kappa_L \left( T^{\frac{\alpha}{q}-\frac{\varepsilon}{q}}  \zeta x^{-1} \left(1 - e^{-x u} \right)  \right) du \alpha \widetilde{\ell}(Tx^{-1}) x^{\alpha} T^{-\alpha} dx.
\end{align*}
Take $\delta>0$ such that $q+\delta<\gamma$ and $\delta<\frac{\varepsilon q}{\alpha-\varepsilon}$ and note that from \eqref{eq:proof:SRD1X1:bound} we have the bound
\begin{equation*}
|\kappa_{L_1}(\zeta) | \leq C |\zeta|^{q+\delta}, \quad \zeta \in \R.
\end{equation*}
Hence,
\begin{align*}
&\left| \kappa_{A_T^{-1}X_1^*(T)}(\zeta) \right|\\
&\ \leq C |\zeta|^{q+\delta}  \int_0^\infty \int_{-\infty}^0 x^{\alpha-q-\delta} e^{(q+\delta)x u} \left(1 - e^{-x} \right)^{q+\delta} \alpha \widetilde{\ell}(Tx^{-1}) T^{\left(\frac{\alpha}{q}-\frac{\varepsilon}{q}\right)\left(q+\delta\right)-\alpha} du dx\\
&\ + C |\zeta|^{q+\delta} \int_0^\infty \int_{0}^1  x^{\alpha-q-\delta} \left(1 - e^{-x u} \right)^{q+\delta} \alpha \widetilde{\ell}(Tx^{-1}) T^{\left(\frac{\alpha}{q}-\frac{\varepsilon}{q}\right)\left(q+\delta\right)-\alpha} du dx.
\end{align*}
Note that by the choice of $\delta$, we have $\left(\frac{\alpha}{q}-\frac{\varepsilon}{q}\right)\left(q+\delta\right)-\alpha<0$. By Potter's bounds, for any $\eta>0$ we have that $\widetilde{\ell}(Tx^{-1}) \leq C_1 \widetilde{\ell}(x^{-1}) T^{\eta}$. Taking $\eta< \alpha- \left(\frac{\alpha}{q}-\frac{\varepsilon}{q}\right)\left(q+\delta\right)$ yields
\begin{align*}
&\left| \kappa_{A_T^{-1}X_1^*(T)}(\zeta) \right|\\
&\leq C_2 T^{\left(\frac{\alpha}{q}-\frac{\varepsilon}{q}\right)\left(q+\delta\right)-\alpha+\eta}  |\zeta|^{q+\delta}  \int_0^\infty x^{\alpha-q-1-\delta} \left(1 - e^{-x} \right)^{q+\delta} \alpha \widetilde{\ell}(x^{-1}) dx\\
&\qquad  + C_3 T^{\left(\frac{\alpha}{q}-\frac{\varepsilon}{q}\right)\left(q+\delta\right)-\alpha+\eta}  |\zeta|^{q+\delta}  \int_0^\infty \int_{0}^1  x^{\alpha-q-\delta} \left(1 - e^{-x u} \right)^{q+\delta}  \alpha \widetilde{\ell}(x^{-1}) du dx\\
&\leq C_2 |\zeta|^{q+\delta}  \int_0^\infty x^{\alpha-1} \alpha \widetilde{\ell}(x^{-1}) dx\\
&\qquad  + C_3 |\zeta|^{q+\delta}  \int_0^\infty \int_{0}^1  x^{\alpha} u^{q-\varepsilon} \alpha \widetilde{\ell}(x^{-1}) du dx\\
&\leq C_2 |\zeta|^{q+\delta} + C_4 |\zeta|^{q+\delta}  \int_0^\infty x\pi(dx)\\
&\leq C_5 |\zeta|^{q+\delta},
\end{align*}
where we have used the inequality $x^{-1}(1-e^{-x})\leq 1$, $x>0$, \eqref{pifinitemean} and the fact that $\pi$ is probability measure. This completes the derivation of the bound for $\left| \kappa_{A_T^{-1}X_1^*(T)}(\zeta) \right|$. Now we use \eqref{e:proofX1case2:1} to get that
\begin{equation*}
\E \left| T^{-1+\frac{\alpha}{q}-\frac{\varepsilon}{q}} X_1^*(T)\right|^q \leq k_q \int_{-\infty}^{\infty} \left( 1- \exp  \{ - 2 C_5 |\zeta|^{q+\delta} \} \right) |\zeta|^{-q-1} d \zeta.
\end{equation*}
The right hand side corresponds to the $q$-th absolute moment of $(q+\delta)$-stable random variable which is finite. Hence, \eqref{e:proof:X1:case2:toprove} holds and this completes the proof.
\end{itemize}
\end{proof}

In case $\gamma>1+\alpha$, for the moments of order $q$ in the range $(1+\alpha,\gamma)$ we are not able to obtain the exact form of the scaling function $\tau_{X_1^*}(q)$ in Lemma \ref{lemma:X1:2}. However, we provide a bound which will be enough for the proof of the main results later on. We conjecture that equality holds in \eqref{e:tauX1case2}. The proofs of Lemma \ref{lemma:X1:1} and Lemma \ref{lemma:X1:2} are particularly delicate because of the presence of infinite second moments. 

\subsubsection{The scaling function of $X_2^*$}

By the decomposition \eqref{e:decomposition}, $X_2^*$ is the integrated supOU process corresponding to a characteristic quadruple $(0,0,\mu_2,\pi)$ where $\mu_{2}(dx)=\mu(dx) \1_{\{|x|\leq1\}}(dx)$ and we assume $\mu_2\not\equiv 0$. In particular, $X_2^*$ has finite variance since $\int_{|x|>1} x^2 \mu_2(dx)<\infty$. Moreover, $\int_{|x|>1} e^{a|x|} \mu_2(dx)<\infty$ and exponential moment of $X_2(1)$ is finite which by \cite[Theorem 7.2.1]{lukacs1970characteristic} implies that the cumulant function of $X_2(1)$ is analytic in the neighborhood of the origin and all moments are finite. Hence, we may use the results of \cite{GLT2017Limit}, namely Eq.~(4.9), Theorem 4.2 and Theorem 4.3 from \cite{GLT2017Limit}. These results are stated here in the following lemma.

\begin{lemma}\label{lemma:X2}
Suppose that Assumption \ref{assum} holds. Then the scaling function $\tau_{X_2^*}(q)$ of the process $X_2^*$ is as follows:
\begin{enumerate}[(a)]
	\item If $\alpha>1$, then
	\begin{equation*}
	\tau_{X_2^*}(q)=\begin{cases}
	\frac{1}{2} q, & 0<q\leq q_*,\\
	q-\alpha, & q\geq q^*.
	\end{cases}
	\end{equation*}
	where $q_*$ is the largest even integer less than or equal to $2\alpha$ and $q^*$ is the smallest even integer greater than $2\alpha$.
	\item If $\alpha\in(0,1)$ and $\beta<1+\alpha$, then
	\begin{equation*}
	\tau_{X_2^*}(q) = \begin{cases}
	\frac{1}{1+\alpha} q, & 0<q\leq 1+\alpha,\\
	q-\alpha, & q \geq 1+\alpha.
	\end{cases}
	\end{equation*}
	\item If $\alpha\in(0,1)$ and $1+\alpha<\beta<2$, then
	\begin{equation*}
	\tau_{X_2^*}(q) = \begin{cases}
	\left(1-\frac{\alpha}{\beta}\right) q, & 0<q\leq \beta,\\
	q-\alpha, & q \geq \beta.
	\end{cases}
	\end{equation*}
\end{enumerate}
\end{lemma}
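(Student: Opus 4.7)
My strategy is to reduce the problem to the finite-variance framework developed in \cite{GLT2017Limit}. The crucial preliminary observation is that $X_2^*$ has moments of all orders. Since $\mu_2$ is supported in $[-1,1]$, the integral $\int_{|x|>1} e^{a|x|} \mu_2(dx)$ vanishes for every $a>0$, so all exponential moments of $X_2(1)$ are finite, and by \cite[Theorem 7.2.1]{lukacs1970characteristic} the cumulant function of $X_2(1)$ is analytic in a neighborhood of the origin. Consequently every polynomial moment of $X_2^*(t)$ is finite, and the scaling function $\tau_{X_2^*}(q)$ is well defined for every $q>0$, in sharp contrast to $\tau_{X_1^*}$.

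Next I would verify that the characteristic quadruple $(0,0,\mu_2,\pi)$ satisfies the hypotheses required by the relevant theorems of \cite{GLT2017Limit}. The truncation in $\mu_2 = \mu \1_{\{|x|\leq 1\}}$ preserves the behavior of $\mu$ at the origin, so Assumption \ref{assum}(iii) continues to hold with the same $\beta$ and the same $c^+, c^-$; the measure $\pi$ is unchanged, so Assumption \ref{assum}(ii) transfers verbatim with parameters $\alpha$ and $\ell$. The three cases of the lemma then correspond precisely to the three regimes classified in \cite{GLT2017Limit}: case (a) with $\alpha>1$ yields a Brownian motion limit with self-similarity index $1/2$; case (b) with $\alpha<1$ and $\beta<1+\alpha$ yields a $(1+\alpha)$-stable L\'evy limit; and case (c) with $\alpha<1$ and $\beta>1+\alpha$ yields a $\beta$-stable limit with self-similarity index $1-\alpha/\beta$. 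In each regime, the small-$q$ branch of $\tau_{X_2^*}$ follows from the corresponding limit theorem (Theorems~4.2 and 4.3 of \cite{GLT2017Limit}) combined with a uniform-integrability argument of the type employed in Lemma \ref{lemma:X1:1}, while the large-$q$ branch $\tau_{X_2^*}(q) = q-\alpha$ is Eq.~(4.9) of \cite{GLT2017Limit}, proved by a direct cumulant estimate that exploits the regular variation of $\pi$ at zero.

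The principal subtlety, and the reason the statement in case~(a) leaves a gap between $q_*$ and $q^*$, is that the transition between the two branches occurs at the (generally non-integer) value $q = 2\alpha$, while the cumulant-based methods in \cite{GLT2017Limit} pin down the growth rate of $\E|X_2^*(T)|^q$ exactly only at even integer orders. For even $k \leq 2\alpha$ the $k$th cumulant of $X_2^*(T)$ grows like $T^{k/2}$, matching the Brownian scaling, while for even $k > 2\alpha$ it grows like $T^{k-\alpha}$, reflecting the intermittent regime. Convexity of the scaling function then extends these exact even-integer values to the intervals $(0,q_*]$ and $[q^*,\infty)$, but the interval $(q_*,q^*)$ remains outside the reach of this technique, which is why the lemma confines itself to those two intervals. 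My plan is therefore to cite the three relevant results of \cite{GLT2017Limit} rather than to reproduce their rather lengthy proofs.
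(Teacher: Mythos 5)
Your proposal is correct and follows essentially the same route as the paper: the paper likewise observes that $\mu_2$ is supported in $[-1,1]$, so all exponential (hence polynomial) moments of $X_2(1)$ are finite by \cite[Theorem 7.2.1]{lukacs1970characteristic}, and then simply invokes Eq.~(4.9), Theorem~4.2 and Theorem~4.3 of \cite{GLT2017Limit} to read off the three cases. Your added explanation of the $(q_*,q^*)$ gap via even-integer cumulant orders and convexity is consistent with the remark the paper makes immediately after the lemma.
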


Lemma \ref{lemma:X2}(a) and convexity of the scaling function imply that for $q_*\leq q \leq q^*$
\begin{equation*}
\tau_{X_2^*}(q) \leq \frac{q^*-\alpha-q_*/2}{q^*-q_*} (x-q_*) + \frac{q_*}{2}.
\end{equation*}
Note also that Lemma \ref{lemma:X2}(a) implies that $\tau_{X_2^*}(q)=q/2$ for $q\leq 2$ which will be enough for the proofs of Theorems \ref{thm:mainb=0} and \ref{thm:mainb!=0} below.

In contrast with the component $X_1^*$, the scaling function of $X_2^*$ displays intermittency in any case covered by Lemma \ref{lemma:X2}. Even in the short-range dependent scenario $\alpha>1$, intermittency appears for higher order moments. 

\subsubsection{The scaling function of $X_3^*$}

The process $X_3^*$ defined in \eqref{e:decomposition} is a Gaussian process. Its scaling function is given in \cite[Theorem 4.1 and 4.4]{GLT2017Limit}. Gaussian supOU processes do not display intermittency and their scaling function is linear over positive reals.
This result is stated here in Lemma \ref{lemma:X3}.

\begin{lemma}\label{lemma:X3}
Suppose that Assumption \ref{assum} holds. Then the scaling function $\tau_{X_3^*}(q)$ of the process $X_3^*$ is as follows:
\begin{enumerate}[(a)]
	\item If $\alpha>1$, then
	\begin{equation*}
	\tau_{X_3^*}(q) = \frac{1}{2} q, \quad \forall q>0.
	\end{equation*}
	\item If $\alpha\in(0,1)$, then
	\begin{equation*}
	\tau_{X_3^*}(q) = \left(1-\frac{\alpha}{2}\right) q, \quad \forall q>0.
	\end{equation*}
\end{enumerate}
\end{lemma}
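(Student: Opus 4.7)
The key observation is that $X_3^*$ is built from a purely Gaussian L\'evy basis $\Lambda_3$ (cumulant function $-b\zeta^2/2$, control measure $\pi\times Leb$), so by Lemma \ref{lemma:Fubini} one may write $X_3^*(t) = \int g_t(\xi,s)\,\Lambda_3(d\xi,ds)$ with the kernel $g_t$ displayed in the proof of Lemma \ref{lemma:momentsX*}. Hence $X_3^*(t)$ is a centered Gaussian variable, and for every $q>0$,
\[
\E|X_3^*(t)|^q = c_q\,\bigl(\Var X_3^*(t)\bigr)^{q/2}, \qquad c_q := \E|Z|^q,\ Z\sim\mathcal{N}(0,1).
\]
Thus the problem reduces to identifying the exponent of growth of $\sigma^2(t):=\Var X_3^*(t)$, since
\[
\tau_{X_3^*}(q) \;=\; \frac{q}{2}\,\lim_{t\to\infty}\frac{\log\sigma^2(t)}{\log t}.
\]

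Next I would compute $\sigma^2(t)$ explicitly. The integration formula \eqref{integrationrule} applied to the Gaussian component gives
\[
\sigma^2(t) \;=\; b\int_0^\infty\!\!\int_{-\infty}^\infty g_t(\xi,s)^2\,ds\,\pi(d\xi).
\]
Inserting the piecewise formula $g_t(\xi,s) = e^s\xi^{-1}(1-e^{-\xi t})\mathbf{1}_{\{s<0\}} + \xi^{-1}(1-e^{-\xi t+s})\mathbf{1}_{\{0<s<\xi t\}}$ and integrating in $s$ yields, after elementary simplification, the closed form
\[
\sigma^2(t) \;=\; b\int_0^\infty\bigl[\xi^{-1} t - \xi^{-2}(1-e^{-\xi t})\bigr]\pi(d\xi) \;=\; b\int_0^t\!\int_0^\infty \xi^{-1}(1-e^{-\xi u})\,\pi(d\xi)\,du.
\]
The integrand $\xi^{-1}(1-e^{-\xi u})$ is bounded near $\xi=0$ (by $u$) and is $O(\xi^{-1})$ near infinity, so $\sigma^2(t)$ is finite for every $t$ and every value of $\alpha>0$ permitted by Assumption \ref{assum}(ii).

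For the asymptotics I split according to $\alpha$. If $\alpha>1$, the density $p(\xi)\sim\alpha\ell(\xi^{-1})\xi^{\alpha-1}$ is integrable against $\xi^{-1}$ near zero, so $\int_0^\infty\xi^{-1}\pi(d\xi)=:c<\infty$. A routine estimate splitting $\int_0^\infty \xi^{-2}(1-e^{-\xi t})\pi(d\xi)$ at $\xi=1/t$ shows this remainder is $o(t)$, whence $\sigma^2(t)\sim bct$ and $\tau_{X_3^*}(q)=q/2$. If $\alpha\in(0,1)$, $\xi^{-1}$ is no longer integrable near the origin and the leading-order behavior comes from small $\xi$. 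Here I would apply a Tauberian / Karamata argument to the Laplace-type transform $u\mapsto\int_0^\infty\xi^{-1}(1-e^{-\xi u})\pi(d\xi)$, using the regular variation \eqref{regvarofp} of $p$ at $0$, to obtain
\[
\int_0^\infty\xi^{-1}(1-e^{-\xi u})\pi(d\xi)\;\sim\; C\,\ell(u)\,u^{1-\alpha}, \qquad u\to\infty.
\]
Integrating in $u$ by Karamata's theorem yields $\sigma^2(t)\sim C'\ell(t)\,t^{2-\alpha}$, hence $\tau_{X_3^*}(q)=(1-\alpha/2)q$.

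The only delicate step is the Tauberian argument in the $\alpha\in(0,1)$ regime; the rest reduces to Gaussian moment identities and the explicit kernel computation. One can bypass any intricate Karamata bookkeeping by rewriting the small-$\xi$ integral as $\int_0^1 \xi^{-1}(1-e^{-\xi u})\alpha\ell(\xi^{-1})\xi^{\alpha-1}d\xi$ and performing the change of variables $v=\xi u$, which exposes the factor $u^{1-\alpha}\ell(u)$ directly.
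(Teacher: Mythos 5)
Your argument is correct, and it is in fact more self-contained than what the paper offers: the paper proves nothing here and simply quotes Theorems 4.1 and 4.4 of \cite{GLT2017Limit}, so your computation essentially reconstructs the argument behind the cited result. The reduction to $\Var X_3^*(t)$ via the Gaussian moment identity is exactly right, and your closed form checks out: integrating $g_t(\xi,s)^2$ in $s$ over $(-\infty,0)$ and $(0,\xi t)$ gives $\tfrac{1}{2}\xi^{-2}(1-e^{-\xi t})^2+\xi^{-2}\bigl[\xi t-2(1-e^{-\xi t})+\tfrac{1}{2}(1-e^{-2\xi t})\bigr]=\xi^{-1}t-\xi^{-2}(1-e^{-\xi t})$, which is the classical formula $\Var X_3^*(t)=2\Var X_3(0)\int_0^t(t-s)r(s)\,ds$ with $r(s)=\int e^{-s\xi}\pi(d\xi)$. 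For $\alpha>1$ the remainder estimate is indeed routine (on $\xi\leq 1/t$ bound $1-e^{-\xi t}$ by $\xi t$; on $\xi>1/t$ use Karamata to see $\int_{1/t}^1\xi^{-2}\pi(d\xi)=O(\ell(t)t^{(2-\alpha)\vee 0})=o(t)$), and for $\alpha\in(0,1)$ your substitution $v=\xi u$ gives $u^{1-\alpha}\alpha\int_0^u v^{\alpha-2}(1-e^{-v})\ell(u/v)\,dv\sim \alpha\Gamma(\alpha-1)\cdot(-1)\cdots$ — more precisely a positive constant times $\ell(u)u^{1-\alpha}$, the integral $\int_0^\infty v^{\alpha-2}(1-e^{-v})\,dv$ being finite precisely for $\alpha\in(0,1)$. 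Two minor points worth making explicit: the interchange of limit and integral in that step requires Potter's bounds (or uniform convergence of $\ell(u/v)/\ell(u)$ on compacts plus a dominating function), and the whole lemma tacitly assumes $b\neq 0$, since otherwise $X_3^*\equiv 0$ and $\tau_{X_3^*}$ is not defined. Neither affects the validity of your proof.
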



\subsection{The scaling function of the integrated process $X^*$}\label{s:sfX*}

To derive the scaling function of the integrated process $X^*=X_1^*+X_2^*+X_3^*$ we will use the expressions for the scaling functions of components in the decomposition \eqref{e:decomposition} and the following proposition which shows how to compute the scaling function of a sum of independent processes.

\begin{proposition}\label{prop:sfofsum}
Let $Y_1=\{Y_1(t),\, t \geq 0\}$ and $Y_2=\{Y_2(t),\, t \geq 0\}$ be two independent processes with the scaling functions $\tau_{Y_1}$ and $\tau_{Y_2}$, respectively, and suppose that $\E Y_1(t) = \E Y_2(t)=0$ for every $t\geq 0$ if the mean is finite. Suppose $q\in (0,\overline{q}(Y_1)) \cap (0,\overline{q}(Y_2))$ and $\tau_{Y_1}(q)$ and $\tau_{Y_2}(q)$ are well-defined and positive. If $q<1$, assume additionally that  $\tau_{Y_1}(q) \neq \tau_{Y_2}(q)$. Then the scaling function of the sum $Y_1+Y_2=\{Y_1(t)+Y_2(t),\, t \geq 0\}$, evaluated at point $q$, equals
\begin{equation*}
\tau_{Y_1+Y_2}(q) = \max \left\{ \tau_{Y_1}(q), \tau_{Y_2}(q) \right\}.
\end{equation*}
\end{proposition}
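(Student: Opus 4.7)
The plan is to prove the two inequalities $\tau_{Y_1+Y_2}(q) \leq \max\{\tau_{Y_1}(q), \tau_{Y_2}(q)\}$ and $\tau_{Y_1+Y_2}(q) \geq \max\{\tau_{Y_1}(q), \tau_{Y_2}(q)\}$ separately, splitting the analysis according to whether $q \geq 1$ or $q < 1$.

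For the upper bound, the idea is purely analytic and requires neither independence nor the zero-mean hypothesis. For $q \geq 1$, Minkowski's inequality gives
$(\E|Y_1(t)+Y_2(t)|^q)^{1/q} \leq (\E|Y_1(t)|^q)^{1/q} + (\E|Y_2(t)|^q)^{1/q} \leq 2\max_i (\E|Y_i(t)|^q)^{1/q}$,
while for $q<1$ the subadditivity $|a+b|^q \leq |a|^q+|b|^q$ yields $\E|Y_1(t)+Y_2(t)|^q \leq 2\max_i \E|Y_i(t)|^q$. In either case, taking $\log$, dividing by $\log t$, and sending $t\to\infty$ gives $\tau_{Y_1+Y_2}(q) \leq \max\{\tau_{Y_1}(q),\tau_{Y_2}(q)\}$.

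For the lower bound when $q \geq 1$, I would exploit independence via conditional Jensen's inequality. Since $\E Y_1(t) = 0$ (the finiteness of $\tau_{Y_1}(q)$ for $q \geq 1$ forces $\E|Y_1(t)|$ to be finite, so the hypothesis applies), one has $\E[Y_1(t)+Y_2(t)\mid Y_2(t)] = Y_2(t)$, and hence
$\E|Y_1(t)+Y_2(t)|^q = \E\bigl[\E[|Y_1(t)+Y_2(t)|^q\mid Y_2(t)]\bigr] \geq \E|Y_2(t)|^q$
by convexity of $x\mapsto |x|^q$. The symmetric argument gives $\geq \E|Y_1(t)|^q$, so $\E|Y_1(t)+Y_2(t)|^q \geq \max_i \E|Y_i(t)|^q$, and taking $\log$ over $\log t$ yields the reverse inequality.

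The delicate case is $q<1$, where the convexity argument above fails; this is where the hypothesis $\tau_{Y_1}(q)\neq \tau_{Y_2}(q)$ enters. I would use the reverse triangle inequality for $x\mapsto x^q$: writing $|Y_1(t)| = |(Y_1(t)+Y_2(t)) - Y_2(t)|$ and using subadditivity of $x^q$ on $[0,\infty)$, $|Y_1(t)|^q \leq |Y_1(t)+Y_2(t)|^q + |Y_2(t)|^q$, so
$\E|Y_1(t)+Y_2(t)|^q \geq \E|Y_1(t)|^q - \E|Y_2(t)|^q$,
and symmetrically. Assume without loss of generality $\tau_{Y_1}(q) > \tau_{Y_2}(q)$. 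Then for any $\varepsilon>0$ small enough that $\tau_{Y_1}(q)-\varepsilon > \tau_{Y_2}(q)+\varepsilon$, and $t$ large enough, $\E|Y_1(t)|^q \geq t^{\tau_{Y_1}(q)-\varepsilon}$ and $\E|Y_2(t)|^q \leq t^{\tau_{Y_2}(q)+\varepsilon}$, so $\E|Y_1(t)+Y_2(t)|^q \geq t^{\tau_{Y_1}(q)-\varepsilon}(1-t^{\tau_{Y_2}(q)+\varepsilon-\tau_{Y_1}(q)+\varepsilon}) \geq \tfrac{1}{2} t^{\tau_{Y_1}(q)-\varepsilon}$ eventually. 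This gives $\tau_{Y_1+Y_2}(q) \geq \tau_{Y_1}(q)-\varepsilon$, and letting $\varepsilon\to 0$ finishes the proof. The main obstacle is precisely this $q<1$ case: without distinct scaling exponents, the crude bound $\E|Y_1|^q - \E|Y_2|^q$ could be dominated by lower-order error terms, which is why the extra hypothesis $\tau_{Y_1}(q)\neq \tau_{Y_2}(q)$ is imposed.
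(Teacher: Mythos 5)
Your proposal is correct, and its overall architecture coincides with the paper's: a subadditivity-type upper bound, a Jensen-based lower bound for $q\geq 1$ exploiting independence and the zero-mean hypothesis, and a ``reverse'' inequality combined with the hypothesis $\tau_{Y_1}(q)\neq\tau_{Y_2}(q)$ for the lower bound when $q<1$. The upper bound and the $q\geq 1$ lower bound are essentially identical to the paper's (the paper writes the Jensen step by integrating $\E|x+Y_2(t)|^q$ against the law of $Y_1(t)$; your conditional-expectation phrasing is the same argument). The one genuine difference is the $q<1$ lower bound. The paper introduces an independent copy $Y_2'$ of $Y_2$, uses the symmetry of $Y_2-Y_2'$ and the identity $Y_1=\tfrac12\bigl((Y_1+Y_2-Y_2')+(Y_1-Y_2+Y_2')\bigr)$ to arrive at $\E|Y_1(t)+Y_2(t)|^q\geq 2^{q-1}\E|Y_1(t)|^q-\E|Y_2(t)|^q$, whereas you apply the pointwise subadditivity of $x\mapsto x^q$ directly to $|Y_1|=|(Y_1+Y_2)-Y_2|$ and obtain $\E|Y_1(t)+Y_2(t)|^q\geq \E|Y_1(t)|^q-\E|Y_2(t)|^q$. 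Your route is shorter, yields a better constant, and does not even use independence in this step; since the leading term still dominates once $\tau_{Y_1}(q)>\tau_{Y_2}(q)$, the conclusion $\tau_{Y_1+Y_2}(q)\geq\tau_{Y_1}(q)-\varepsilon$ follows just as in the paper. Both approaches need the distinctness hypothesis for exactly the reason you identify, so nothing is lost and a small amount of machinery is saved.
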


\begin{proof}
Suppose that $\max \left\{ \tau_{Y_1}(q), \tau_{Y_2}(q) \right\}= \tau_{Y_1}(q)$. For $\varepsilon>0$ we can take $t$ large enough so that
\begin{equation*}
\frac{\log \E \left|Y_1(t)\right|^q }{\log t} \geq \frac{\log \E \left|Y_2(t)\right|^q }{\log t} - \varepsilon
\end{equation*}
and hence
\begin{equation}\label{e:1>2}
\E \left|Y_1(t)\right|^q \geq \E \left|Y_2(t)\right|^q  t^{-\varepsilon}.
\end{equation}
From the inequality
\begin{equation}\label{cq-inequality}
\E \left|Y_1(t)+Y_2(t) \right|^q \leq c_q \E \left|Y_1(t)\right|^q + c_q \E \left|Y_2(t)\right|^q, \quad c_q=\max \left\{1, 2^{q-1} \right\},
\end{equation}
we have that
\begin{align*}
\tau_{Y_1+Y_2}(q)  &= \lim_{t\to \infty} \frac{\log \E \left|Y_1(t)+Y_2(t) \right|^q}{\log t}\\
&\leq \lim_{t\to \infty} \left( \frac{\log c_q}{\log t} + \frac{\log \left( \E \left|Y_1(t)\right|^q + \E \left|Y_2(t)\right|^q \right)}{\log t} \right)\\
&= \lim_{t\to \infty} \frac{\log \E \left|Y_1(t)\right|^q  + \log \left(1 + \frac{\E \left|Y_2(t)\right|^q}{\E \left|Y_1(t)\right|^q} \right) }{\log t}\\
&\leq \lim_{t\to \infty} \frac{\log \E \left|Y_1(t)\right|^q  + \log \left(1 + t^{\varepsilon}\right) }{\log t}\\
&=\tau_{Y_1}(q) + \varepsilon,
\end{align*}
where we used \eqref{e:1>2}. Since $\varepsilon$ was arbitrary, we conclude that $\tau_{Y_1+Y_2}(q)\leq \max \left\{ \tau_{Y_1}(q), \tau_{Y_2}(q) \right\}$.

We prove the reverse inequality for the $q\geq 1$ case first. Note that in this case $\E Y_1(t) = \E Y_2(t)=0$ for every $t\geq 0$. For $x\in \R$ we have by using Jensen's inequality that
\begin{equation*}
|x|^q = \left| x +  \E Y_2(t) \right|^q \leq \E \left| x +  Y_2(t) \right|^q.
\end{equation*}
Letting $F_{Y_1(t)}$ and $F_{Y_2(t)}$ denote the distribution functions of $Y_1(t)$ and $Y_2(t)$, respectively, we get by independence
\begin{align*}
\E \left|Y_1(t)\right|^q &= \int_{-\infty}^\infty |x|^q dF_{Y_1(t)} (x) \leq \int_{-\infty}^\infty \E \left| x +  Y_2(t) \right|^q dF_{Y_1(t)} (x)\\
& = \int_{-\infty}^\infty \int_{-\infty}^\infty \left| x +  y \right|^q dF_{Y_2(t)} (y) dF_{Y_1(t)} (x) = \E \left|Y_1(t)+Y_2(t) \right|^q.
\end{align*}
From here it follows that
\begin{equation*}
\tau_{Y_1+Y_2}(q) \geq \tau_{Y_1}(q).
\end{equation*}

Suppose now that $q<1$ and let $Y_2'=\{Y_2'(t),\, t \geq 0\}$ be an independent copy of the process $Y_2=\{Y_2(t),\, t \geq 0\}$, independent of $Y_1$. From \eqref{cq-inequality} we have that
\begin{equation}\label{lemma:proof:e1}
\E \left|Y_1(t)+Y_2(t) \right|^q \geq \E \left|Y_1(t)+Y_2(t) - Y_2'(t) \right|^q - \E \left|Y_2(t)\right|^q.
\end{equation}
Since $Y_2(t) - Y_2'(t)$ is symmetric it follows that $Y_1(t)+Y_2(t) - Y_2'(t) \overset{d}{=} Y_1(t) - Y_2(t) + Y_2'(t)$. From the identity
\begin{equation*}
Y_1(t) = \frac{1}{2} \left( Y_1(t) + Y_2(t) - Y_2'(t) + Y_1(t) - Y_2(t) + Y_2'(t) \right)
\end{equation*}
we get by using \eqref{cq-inequality} that
\begin{align*}
\E \left|Y_1(t)\right|^q &\leq 2^{-q} \left( \E \left|Y_1(t)+Y_2(t) - Y_2'(t) \right|^q + \E \left|Y_1(t) - Y_2(t) + Y_2'(t) \right|^q \right)\\
&= 2^{1-q} \E \left|Y_1(t)+Y_2(t) - Y_2'(t) \right|^q.
\end{align*}
Returning back to \eqref{lemma:proof:e1} we have
\begin{equation}\label{lemma:proof:e2}
\E \left|Y_1(t)+Y_2(t) \right|^q \geq 2^{q-1} \E \left|Y_1(t)\right|^q - \E \left|Y_2(t)\right|^q = \E \left|Y_1(t)\right|^q \left( 2^{q-1} - \frac{\E \left|Y_2(t)\right|^q}{\E \left|Y_1(t)\right|^q} \right).
\end{equation}
We assumed that $\tau_{Y_1}(q)\neq \tau_{Y_2}(q)$ and without loss of generality let $\tau_{Y_1}(q)> \tau_{Y_2}(q)$. For $\varepsilon>0$ small enough we can take $t$ large enough so that
\begin{equation*}
\frac{\log \E \left|Y_1(t)\right|^q }{\log t} \geq \frac{\log \E \left|Y_2(t)\right|^q }{\log t} + \varepsilon
\end{equation*}
and hence
\begin{equation*}
\E \left|Y_1(t)\right|^q \geq \E \left|Y_2(t)\right|^q  t^{\varepsilon}.
\end{equation*}
We conclude that
\begin{equation*}
\frac{\E \left|Y_2(t)\right|^q}{\E \left|Y_1(t)\right|^q} \to 0, \text{ as } t\to \infty.
\end{equation*}
By taking logarithms in \eqref{lemma:proof:e2}, dividing by $\log t$ and letting $t\to \infty$, we get
\begin{equation*}
\tau_{Y_1+Y_2}(q) \geq \tau_{Y_1}(q).
\end{equation*}
\end{proof}

We are now ready for the proofs of the main results.

\begin{proof}[Proof of Theorem \ref{thm:mainb=0}]
We shall combine the results of Lemmas \ref{lemma:X1:1}, \ref{lemma:X1:2} and \ref{lemma:X2} by using Proposition \ref{prop:sfofsum}.\\

\textit{(a)} Suppose that $\gamma<1+\alpha$ and split cases depending on the scaling function of $X_2^*$.
\begin{itemize}
\item If $\alpha>1$, then from Lemma \ref{lemma:X2} $\tau_{X_2^*}(q)=q/2$ for $q\in(0,2)$. Since $1/\gamma>1/2$, we have for $q\in (0,\gamma)$
\begin{equation*}
\tau_{X^*}(q) = \max \left\{ \tau_{X_1^*}(q), \tau_{X_2^*}(q) \right\} = \max \left\{ \frac{1}{\gamma} q, \frac{1}{2}q \right\} = \frac{1}{\gamma} q.
\end{equation*}
\item If $\alpha\in(0,1)$ and $\beta<1+\alpha$, then we have for $q\in (0,\gamma)$
\begin{equation*}
\tau_{X^*}(q) = \max \left\{ \tau_{X_1^*}(q), \tau_{X_2^*}(q) \right\} = \max \left\{ \frac{1}{\gamma} q, \frac{1}{1+\alpha}q \right\} = \frac{1}{\gamma} q,
\end{equation*}
since $\frac{1}{\gamma} > \frac{1}{1+\alpha}$.
\item If $\alpha\in(0,1)$ and $\beta>1+\alpha$, then for $q\in (0,\gamma)$
\begin{equation*}
\tau_{X^*}(q) = \max \left\{ \tau_{X_1^*}(q), \tau_{X_2^*}(q) \right\} = \max \left\{ \frac{1}{\gamma} q, \left(1-\frac{\alpha}{\beta}\right) q \right\} = \frac{1}{\gamma} q,
\end{equation*}
since $1-\frac{\alpha}{\beta}<1+\frac{1-\gamma}{\beta} < 1+\frac{1-\gamma}{\gamma} = \frac{1}{\gamma}$.
\end{itemize}

\textit{(b)} If $\gamma>1+\alpha$ and $\beta<1+\alpha$, then necessarily $\alpha \in (0,1)$. For $1\leq q \leq 1+\alpha$ we have by Proposition \ref{prop:sfofsum} and by Lemmas \ref{lemma:X1:2} and \ref{lemma:X2} that
\begin{equation}\label{e:stepinproof1}
\tau_{X^*}(q) = \max \left\{ \tau_{X_1^*}(q), \tau_{X_2^*}(q) \right\} = \max \left\{ \frac{1}{1+\alpha} q, \frac{1}{1+\alpha} q \right\} = \frac{1}{1+\alpha} q.
\end{equation}
Since $\tau_{X_1^*}(q)=\tau_{X_2^*}(q)$ we cannot use Proposition \ref{prop:sfofsum} for $q<1$, but from \eqref{e:stepinproof1}, $\tau_{X^*}(0)=0$ and the fact that the scaling function is always convex, we conclude using \cite[Lemma 2]{GLST2017Arxiv} that $\tau_{X^*}(q) =  \frac{1}{1+\alpha} q$ for $q<1$ also.

For $1+\alpha<q<\gamma$ we have
\begin{equation*}
\tau_{X^*}(q) = \max \left\{ \tau_{X_1^*}(q), \tau_{X_2^*}(q) \right\} = \max \left\{ \frac{1}{1+\alpha} q, q-\alpha \right\} = q-\alpha.
\end{equation*}

\textit{(c)} If $\gamma>1+\alpha$, $\beta>1+\alpha$ and $\beta\leq \gamma$,  we have
\begin{equation*}
\tau_{X^*}(q) = \begin{cases}
\max \left\{ \frac{1}{1+\alpha} q, \left(1-\frac{\alpha}{\beta} \right) q \right\}, & 0<q\leq 1+\alpha,\\
\max \left\{ \tau_{X_1^*}(q), \left(1-\frac{\alpha}{\beta} \right) q \right\}, & 1+\alpha < q \leq \beta,\\
\max \left\{ \tau_{X_1^*}(q), q-\alpha \right\}, & \beta < q < \gamma.
\end{cases}
\end{equation*}
For the case $q\leq 1+\alpha$, note that because $\beta>1+\alpha$ we have $1-\frac{\alpha}{\beta}>1-\frac{\alpha}{1+\alpha}=\frac{1}{1+\alpha}$. In Lemma \ref{lemma:X1:2} we showed that $\tau_{X_1^*}(q)\leq q-\alpha$ for $1+\alpha<q<\gamma$ and for $q\leq \beta$ we have $q- \frac{\alpha}{\beta}q \geq q-\alpha$. Hence we obtain
\begin{equation*}
\tau_{X^*}(q) = \begin{cases}
\left(1-\frac{\alpha}{\beta} \right) q, & 0<q\leq 1+\alpha,\\
\left(1-\frac{\alpha}{\beta} \right) q , & 1+\alpha < q \leq \beta,\\
q-\alpha, & \beta < q < \gamma.
\end{cases}
\end{equation*}

\textit{(d)} If $\gamma>1+\alpha$, $\beta>1+\alpha$ and $\beta> \gamma$, then by using the same arguments as in the previous case we get
\begin{equation*}
\tau_{X^*}(q) = \begin{cases}
\max \left\{ \frac{1}{1+\alpha} q, \left(1-\frac{\alpha}{\beta} \right) q \right\}, & 0<q\leq 1+\alpha,\\
\max \left\{ \tau_{X_1^*}(q), \left(1-\frac{\alpha}{\beta} \right) q \right\}, & 1+\alpha < q < \gamma,
\end{cases}
= \left(1-\frac{\alpha}{\beta} \right) q, \quad 0<q<\gamma.
\end{equation*}
\end{proof}

One may follow the proof of Theorem \ref{thm:mainb=0} from Figure \ref{fig4}. Each subfigure shows the scaling function of $X_1^*$ in blue and the scaling function of $X_2^*$ in red. Following Proposition \ref{prop:sfofsum}, the scaling function of the integrated process $X^*$ (thick green) is obtained by taking the maximum of these two functions. The vertical dotted line indicates the range of finite moments of $X_1^*$ and $X^*$. The scaling function of $X^*$ is well-defined only in this range.

\begin{figure}
\centering
\begin{subfigure}[b]{0.45\textwidth}
\newcommand\gammaV{1.2}
\newcommand\alphaV{1.5}
\resizebox{1.1\textwidth}{!}{
\begin{tikzpicture}[domain=0:4]
\begin{axis}[
axis lines=middle,
xlabel=$q$, xlabel style={at=(current axis.right of origin), anchor=west},
ylabel=$\tau_{X^*}(q)$, ylabel style={at=(current axis.above origin), anchor=south},
xtick={0,\gammaV,2,4},
xticklabels={$0$,$\gamma$,$q_*$,$q^*$},
xmin=0,
xmax=5,
ymajorticks=false
]
\addplot[line width=5pt,opacity=0.8,white!70!green,domain=0:\gammaV]{x/\gammaV};
\addplot[thick,white!20!blue,domain=0:\gammaV]{x/\gammaV} node [pos=0.6,left]{$\frac{1}{\gamma}q\ $};
\addplot[thick,white!20!red,domain=0:2]{0.5*x} node [pos=0.7,below]{$\frac{1}{2}q$};
\addplot[thick,white!20!red,domain=4:6]{x-\alphaV} node [pos=0.2,left]{$q-\alpha$};
\addplot[dashed,white!20!red,thick] coordinates {(2,1) (4,2.5)};
\addplot[dashed] coordinates {(0,2.5) (4,2.5)};
\addplot[dashed] coordinates {(4,0) (4,2.5)};
\addplot[dashed] coordinates {(0,1) (2,1)};
\addplot[dashed] coordinates {(2,0) (2,1)};
\addplot[dotted,line width=1.5pt] coordinates {(\gammaV,0) (\gammaV,3.5)};
\end{axis}
\end{tikzpicture}
}
\caption{$\alpha>1$}
\label{fig4a}
\end{subfigure}
\hfill
\begin{subfigure}[b]{0.45\textwidth}
\newcommand\gammaV{0.8}
\resizebox{1.1\textwidth}{!}{
\begin{tikzpicture}[domain=0:3]
\begin{axis}[
axis lines=middle,
xlabel=$q$, xlabel style={at=(current axis.right of origin), anchor=west},
ylabel=$\tau_{X^*}(q)$, ylabel style={at=(current axis.above origin), anchor=south},
xtick={0,\gammaV,1.5},
xticklabels={$0$,$\gamma$,$1+\alpha$},
xmin=0,
xmax=3,
ymajorticks=false
]
\addplot[line width=5pt,opacity=0.8,white!70!green,domain=0:\gammaV]{x/\gammaV};
\addplot[thick,white!20!blue,domain=0:\gammaV]{x/\gammaV} node [pos=0.6,left]{$\frac{1}{\gamma}q\ $};
\addplot[thick,white!20!red,domain=0:1.5]{(1/1.5)*x} node [pos=0.75,below]{$\frac{1}{1+\alpha}q\ $};
\addplot[thick,white!20!red,domain=1.5:4]{x-0.5} node [pos=0.5,left]{$q-\alpha$};
\addplot[dashed] coordinates {(1.5,0) (1.5,1)};
\addplot[dashed] coordinates {(0,1) (1.5,1)};
\addplot[dotted,line width=1.5pt] coordinates {(\gammaV,0) (\gammaV,2.5)};
\end{axis}
\end{tikzpicture}
}
\caption{$\alpha\in(0,1)$, $\gamma<1+\alpha$ and $\beta<1+\alpha$}
\label{fig4b}
\end{subfigure}
\hfill
\begin{subfigure}[b]{0.45\textwidth}
\newcommand\gammaV{1.1}
\resizebox{1.1\textwidth}{!}{
\begin{tikzpicture}[domain=0:3]
\begin{axis}[
axis lines=middle,
xlabel=$q$, xlabel style={at=(current axis.right of origin), anchor=west},
ylabel=$\tau_{X^*}(q)$, ylabel style={at=(current axis.above origin), anchor=south},
xtick={0,\gammaV,1.7},
xticklabels={$0$,$\gamma$,$\beta$},
xmin=0,
xmax=3,
ymajorticks=false
]
\addplot[line width=5pt,opacity=0.8,white!70!green,domain=0:\gammaV]{x/\gammaV};
\addplot[thick,white!20!blue,domain=0:\gammaV]{x/\gammaV} node [pos=0.6,left]{$\frac{1}{\gamma}q\ $};
\addplot[thick,white!20!red,domain=0:1.7]{(1-0.5/1.7)*x} node [pos=0.85,below]{$\left(1-\frac{\alpha}{\beta}\right)q\, $};
\addplot[thick,white!20!red,domain=1.7:3]{x-0.5} node [pos=0.5,left]{$q-\alpha$};
\addplot[dashed] coordinates {(1.7,0) (1.7,1.2)};
\addplot[dashed] coordinates {(0,1.2) (1.7,1.2)};
\addplot[dotted,line width=1.5pt] coordinates {(\gammaV,0) (\gammaV,2.5)};
\end{axis}
\end{tikzpicture}
}
\caption{$\alpha\in(0,1)$, $\gamma<1+\alpha$ and $\beta>1+\alpha$}
\label{fig4c}
\end{subfigure}
\hfill
\begin{subfigure}[b]{0.45\textwidth}
\newcommand\gammaV{1.8}
\newcommand\alphaV{0.2}
\newcommand\alphaVP{1.2}
\resizebox{1.1\textwidth}{!}{
\begin{tikzpicture}[domain=0:3]
\begin{axis}[
axis lines=middle,
xlabel=$q$, xlabel style={at=(current axis.right of origin), anchor=west},
ylabel=$\tau_{X^*}(q)$, ylabel style={at=(current axis.above origin), anchor=south},
xtick={0,\gammaV,1.2},
xticklabels={$0$,$\gamma$,$1+\alpha$},
xmin=0,
xmax=3,
ymajorticks=false
]
\addplot[line width=5pt,opacity=0.8,white!70!green,domain=0:1.2]{(1/1.2)*x};
\addplot[line width=5pt,opacity=0.8,white!70!green,domain=1.2:\gammaV]{x-0.2};
\addplot[thick,white!20!blue,domain=0:(1+\alphaV)]{(1/(1+\alphaV))*x-0.02} node [pos=0.5,above]{$\frac{1}{1+\alpha}q\ $};
\addplot[thick,white!20!blue,dashed,domain=(1+\alphaV):\gammaV]{x-\alphaV-0.02};
\addplot[thick,white!20!red,domain=0:1.2]{(1/1.2)*x};
\addplot[thick,white!20!red,domain=1.2:\gammaV]{x-0.2};
\addplot[thick,white!20!red,domain=\gammaV:3]{x-0.2} node [pos=0.5,left]{$q-\alpha$};
\addplot[dashed] coordinates {(1.2,0) (1.2,1)};
\addplot[dashed] coordinates {(0,1) (1.2,1)};
\addplot[dotted,line width=1.5pt] coordinates {(\gammaV,0) (\gammaV,2.5)};
\end{axis}
\end{tikzpicture}
}
\caption{$\beta<1+\alpha<\gamma$}
\label{fig4d}
\end{subfigure}
\hfill
\begin{subfigure}[b]{0.45\textwidth}
\newcommand\gammaV{1.99}
\newcommand\alphaV{0.4}
\newcommand\alphaVP{1.4}
\resizebox{1.1\textwidth}{!}{
\begin{tikzpicture}[domain=0:3]
\begin{axis}[
axis lines=middle,
xlabel=$q$, xlabel style={at=(current axis.right of origin), anchor=west},
ylabel=$\tau_{X^*}(q)$, ylabel style={at=(current axis.above origin), anchor=south},
xtick={0,\gammaV,1.4,1.85},
xticklabels={$0$,$\quad \gamma$,$1+\alpha$,$\beta$},
xmin=0,
xmax=2.2,
ymin=0,
ymax=1.8,
ymajorticks=false
]
\addplot[line width=5pt,opacity=0.8,white!70!green,domain=0:1.85]{(1-0.4/1.85)*x};
\addplot[line width=5pt,opacity=0.8,white!70!green,domain=1.85:\gammaV]{x-0.4};
\addplot[thick,white!20!blue,domain=0:(1+\alphaV)]{(1/(1+\alphaV))*x} node [pos=0.65,below]{$\frac{1}{1+\alpha}q\ $};
\addplot[thick,white!20!blue,dashed,domain=(1+\alphaV):\gammaV]{x-\alphaV};
\addplot[dashed] coordinates {(1.4,0) (1.4,1)};
\addplot[dashed] coordinates {(0,1) (1.4,1)};
\addplot[thick,white!20!red,domain=0:1.85]{(1-0.4/1.85)*x} node [pos=0.6,left]{$\left(1-\frac{\alpha}{\beta}\right)q\, $};
\addplot[thick,white!20!red,domain=1.85:2.2]{x-0.4} node [pos=0.09,below right]{$q-\alpha$};
\addplot[dashed] coordinates {(1.85,0) (1.85,1.45)};
\addplot[dashed] coordinates {(0,1.45) (1.85,1.45)};
\addplot[dotted,line width=1.5pt] coordinates {(\gammaV,0) (\gammaV,1.8)};
\end{axis}
\end{tikzpicture}
}
\caption{$1+\alpha<\beta\leq\gamma$}
\label{fig4e}
\end{subfigure}
\hfill
\begin{subfigure}[b]{0.45\textwidth}
\newcommand\gammaV{1.65}
\newcommand\alphaV{0.4}
\newcommand\alphaVP{1.4}
\resizebox{1.1\textwidth}{!}{
\begin{tikzpicture}[domain=0:3]
\begin{axis}[
axis lines=middle,
xlabel=$q$, xlabel style={at=(current axis.right of origin), anchor=west},
ylabel=$\tau_{X^*}(q)$, ylabel style={at=(current axis.above origin), anchor=south},
xtick={0,\gammaV,1.4,1.85},
xticklabels={$0$,$\gamma$,$1+\alpha$,$\beta$},
xmin=0,
xmax=2.02,
ymajorticks=false
]
\addplot[line width=5pt,opacity=0.8,white!70!green,domain=0:1.65]{(1-0.4/1.85)*x};
\addplot[thick,white!20!blue,domain=0:(1+\alphaV)]{(1/(1+\alphaV))*x} node [pos=0.65,below]{$\frac{1}{1+\alpha}q\ $};
\addplot[thick,white!20!blue,dashed,domain=(1+\alphaV):\gammaV]{x-\alphaV};
\addplot[dashed] coordinates {(1.4,0) (1.4,1)};
\addplot[dashed] coordinates {(0,1) (1.4,1)};
\addplot[thick,white!20!red,domain=0:1.85]{(1-0.4/1.85)*x} node [pos=0.6,left]{$\left(1-\frac{\alpha}{\beta}\right)q\, $};
\addplot[thick,white!20!red,domain=1.85:2.05]{x-0.4} node [pos=0.06,above left]{$q-\alpha$};
\addplot[dashed] coordinates {(1.85,0) (1.85,1.45)};
\addplot[dashed] coordinates {(0,1.45) (1.85,1.45)};
\addplot[dotted,line width=1.5pt] coordinates {(\gammaV,0) (\gammaV,1.6)};
\end{axis}
\end{tikzpicture}
}
\caption{$1+\alpha<\gamma<\beta$}
\label{fig4f}
\end{subfigure}
\caption{The scaling functions of $X^*$ when $b=0$ (no Gaussian component). Each plot shows the scaling functions $\tau_{X_1^*}$ (blue), $\tau_{X_2^*}$ (red) and $\tau_{X^*}$ (thick green). Dashed parts of the plots denote the upper bounds. The vertical thick dotted line denotes the position of $\gamma$, beyond which the moments of $X^*_1$ and $X^*$ are infinite.}\label{fig4}
\end{figure}
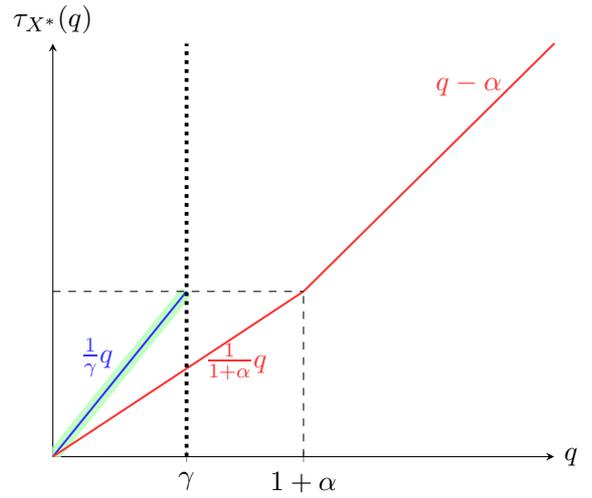
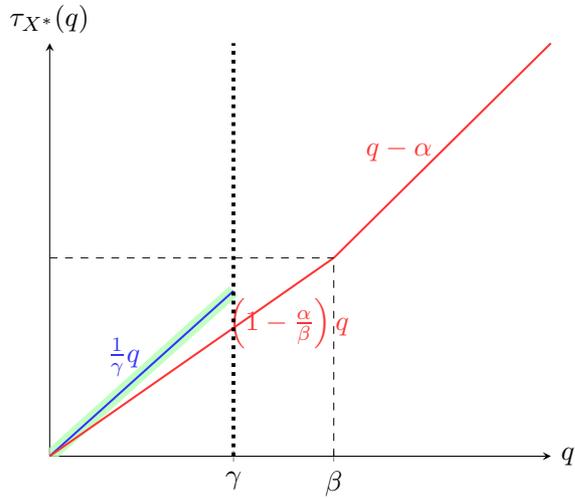
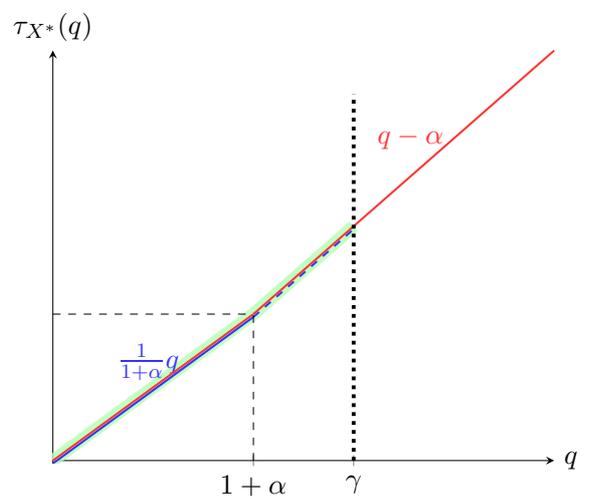
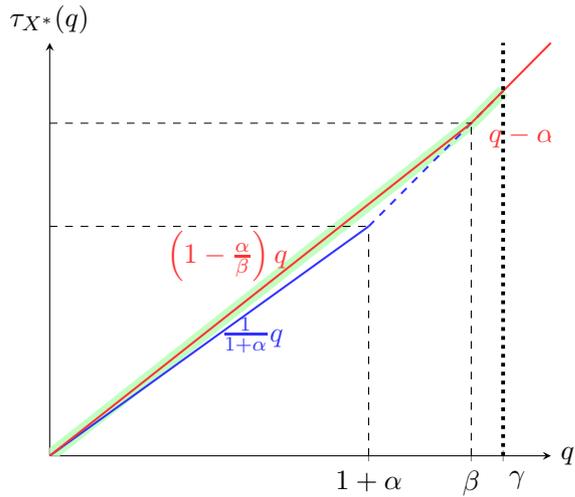
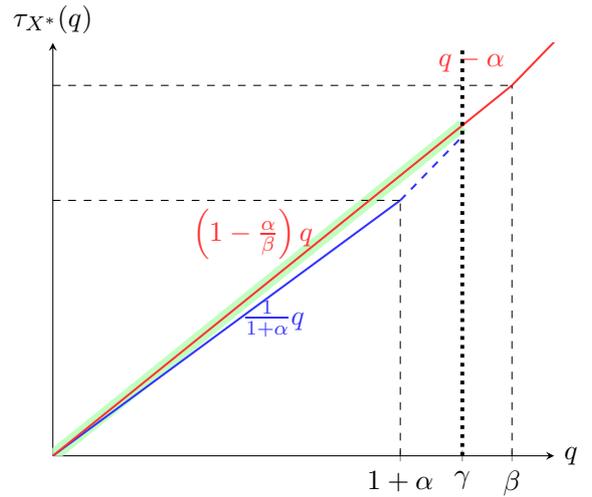

\begin{proof}[Proof of Theorem \ref{thm:mainb!=0}]
We will use the results of Theorem \ref{thm:mainb=0} and Lemma \ref{lemma:X3} and combine them using Proposition \ref{prop:sfofsum} so that
\begin{equation*}
\tau_{X^*}(q) = \max \left\{ \tau_{X_1^*+X_2^*}(q), \tau_{X_3^*}(q) \right\}.
\end{equation*}

\textit{(a)} If $\alpha>1$, then for $q<\gamma$
\begin{equation*}
\tau_{X^*}(q)=\max \left\{ \frac{1}{\gamma}q,  \frac{1}{2}q \right\} = \frac{1}{\gamma}q.
\end{equation*}
If $\alpha \in (0,1)$ and $\gamma <\frac{2}{2-\alpha}$, then also $\gamma<1+\frac{\alpha}{2-\alpha}<1+\alpha$ and hence
\begin{equation*}
\tau_{X^*}(q)=\max \left\{ \frac{1}{\gamma}q,  \left(1-\frac{\alpha}{2} \right) q \right\} = \frac{1}{\gamma}q,
\end{equation*}
since $1/\gamma >1-\alpha/2 \Leftrightarrow \gamma < 2/(2-\alpha)$.

\textit{(b)} Suppose now that $\alpha \in (0,1)$ and $\gamma >\frac{2}{2-\alpha}$.
\begin{itemize}
	\item If $\frac{2}{2-\alpha} < \gamma < 1+\alpha$, then
	\begin{equation*}
	\tau_{X^*}(q)=\max \left\{ \frac{1}{\gamma}q,  \left(1-\frac{\alpha}{2} \right) q \right\} = \left(1-\frac{\alpha}{2} \right) q,
	\end{equation*}
	since $1/\gamma <1-\alpha/2$.
	\item If $\gamma>1+\alpha$ and $\beta<1+\alpha$, then we have
	\begin{equation*}
	\tau_{X^*}(q) = \begin{cases}
	\max \left\{ \frac{1}{1+\alpha} q, \left(1-\frac{\alpha}{2} \right) q \right\}, & 0<q\leq 1+\alpha,\\
	\max \left\{ q-\alpha, \left(1-\frac{\alpha}{2} \right) q \right\}, & 1+\alpha < q < \gamma.
	\end{cases}
	\end{equation*}
	Now $\alpha<1$ implies $\frac{1}{1+\alpha}= 1- \frac{\alpha}{1+\alpha}< 1-\frac{\alpha}{2}$ and for $q<2$ we have $q- \frac{\alpha}{2} q >q-\alpha$. Hence,
	\begin{equation*}
	\tau_{X^*}(q)=\left(1-\frac{\alpha}{2} \right) q, \quad 0<q<\gamma.
	\end{equation*}
	\item If $\gamma>1+\alpha$, $1+\alpha<\beta$ and $\beta \leq \gamma$, then
	\begin{equation*}
	\tau_{X^*}(q) = \begin{cases}
	\max \left\{ \left(1-\frac{\alpha}{\beta} \right) q, \left(1-\frac{\alpha}{2} \right) q \right\}, & 0<q\leq \beta,\\
	\max \left\{ q-\alpha, \left(1-\frac{\alpha}{2} \right) q \right\}, & \beta < q < \gamma.
	\end{cases}
	=\left(1-\frac{\alpha}{2} \right) q, \ 0<q<\gamma,
	\end{equation*}	
	since $1-\frac{\alpha}{\beta}<1-\frac{\alpha}{2}$ and by the same argument as in the previous case.
	\item The same argument applies to case $\gamma>1+\alpha$, $1+\alpha<\beta$ and $\beta > \gamma$.
\end{itemize}
\end{proof}

Figures \ref{fig5} and \ref{fig6} illustrate the proof of Theorem \ref{thm:mainb!=0}. The scaling functions $\tau_{X_1^*}$, $\tau_{X_2^*}$ and $\tau_{X_3^*}$ of each component are shown on each plot in red, blue and purple, respectively, while their maximum is denoted by the thick green line. Figure \ref{fig5} is related to the case (a) of Theorem \ref{thm:mainb!=0} and Figure \ref{fig6} to the case (b) of Theorem \ref{thm:mainb!=0}. The figures are split based on different forms of the scaling functions of the three components $X_1^*$, $X_2^*$ and $X_3^*$.

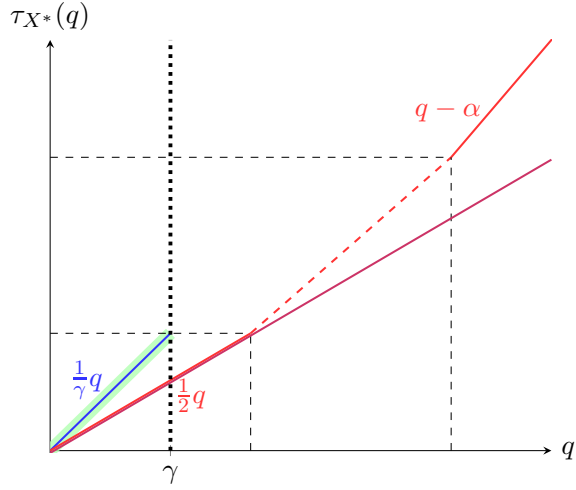
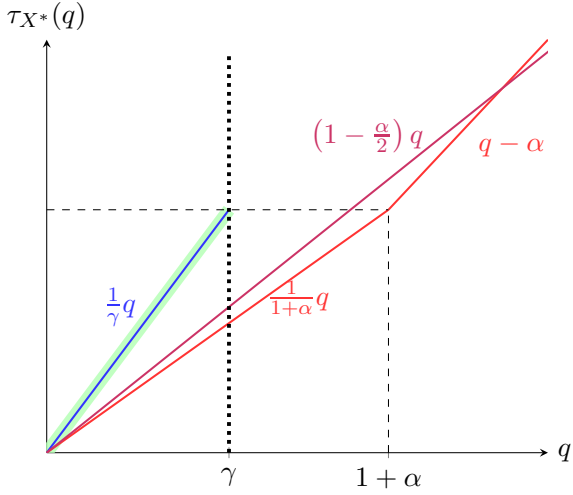
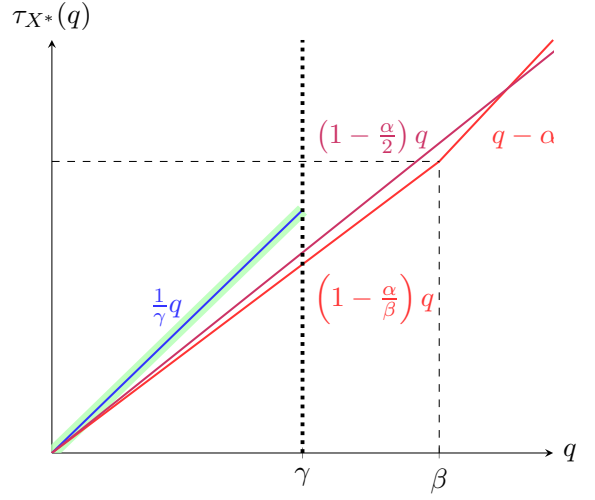
\begin{figure}
\centering
\begin{subfigure}[b]{0.45\textwidth}
\newcommand\gammaV{1.2}
\newcommand\alphaV{1.5}
\resizebox{1.1\textwidth}{!}{
\begin{tikzpicture}[domain=0:4]
\begin{axis}[
axis lines=middle,
xlabel=$q$, xlabel style={at=(current axis.right of origin), anchor=west},
ylabel=$\tau_{X^*}(q)$, ylabel style={at=(current axis.above origin), anchor=south},
xtick={0,\gammaV},
xticklabels={$0$,$\gamma$},
xmin=0,
xmax=5,
ymajorticks=false
]
\addplot[line width=5pt,opacity=0.8,white!70!green,domain=0:\gammaV]{x/\gammaV};
\addplot[thick,white!20!blue,domain=0:\gammaV]{x/\gammaV} node [pos=0.6,left]{$\frac{1}{\gamma}q\ $};
\addplot[thick,white!20!red,domain=0:2]{0.5*x} node [pos=0.7,below]{$\frac{1}{2}q$};
\addplot[thick,white!20!red,domain=4:6]{x-\alphaV} node [pos=0.2,left]{$q-\alpha$};
\addplot[dashed,white!20!red,thick] coordinates {(2,1) (4,2.5)};
\addplot[dashed] coordinates {(0,2.5) (4,2.5)};
\addplot[dashed] coordinates {(4,0) (4,2.5)};
\addplot[dashed] coordinates {(0,1) (2,1)};
\addplot[dashed] coordinates {(2,0) (2,1)};
\addplot[thick,white!20!purple,domain=0:5]{x/2-0.02};
\addplot[dotted,line width=1.5pt] coordinates {(\gammaV,0) (\gammaV,3.5)};
\end{axis}
\end{tikzpicture}
}
\caption{$\alpha>1$}
\label{fig5a}
\end{subfigure}
\hfill\\
\begin{subfigure}[b]{0.45\textwidth}
\newcommand\gammaV{0.8}
\newcommand\alphaV{0.5}
\resizebox{1.1\textwidth}{!}{
\begin{tikzpicture}[domain=0:3]
\begin{axis}[
axis lines=middle,
xlabel=$q$, xlabel style={at=(current axis.right of origin), anchor=west},
ylabel=$\tau_{X^*}(q)$, ylabel style={at=(current axis.above origin), anchor=south},
xtick={0,\gammaV,1.5},
xticklabels={$0$,$\gamma$,$1+\alpha$},
xmin=0,
xmax=2.2,
ymajorticks=false
]
\addplot[line width=5pt,opacity=0.8,white!70!green,domain=0:\gammaV]{x/\gammaV};
\addplot[thick,white!20!blue,domain=0:\gammaV]{x/\gammaV} node [pos=0.6,left]{$\frac{1}{\gamma}q\ $};
\addplot[thick,white!20!red,domain=0:1.5]{(1/1.5)*x} node [pos=0.75,below]{$\frac{1}{1+\alpha}q\ $};
\addplot[thick,white!20!red,domain=1.5:2.2]{x-0.5} node [pos=0.5,below right]{$q-\alpha$};
\addplot[dashed] coordinates {(1.5,0) (1.5,1)};
\addplot[dashed] coordinates {(0,1) (1.5,1)};
\addplot[thick,white!20!purple,domain=0:3]{(1-\alphaV/2)*x} node [pos=0.58,left]{$\left(1-\frac{\alpha}{2}\right)q\ $};
\addplot[dotted,line width=1.5pt] coordinates {(\gammaV,0) (\gammaV,1.65)};
\end{axis}
\end{tikzpicture}
}
\caption{$\alpha\in(0,1)$, $\gamma<\frac{2}{2-\alpha}<1+\alpha$ and $\beta<1+\alpha$}
\label{fig5b}
\end{subfigure}
\hfill
\begin{subfigure}[b]{0.45\textwidth}
\newcommand\gammaV{1.1}
\resizebox{1.1\textwidth}{!}{
\begin{tikzpicture}[domain=0:3]
\begin{axis}[
axis lines=middle,
xlabel=$q$, xlabel style={at=(current axis.right of origin), anchor=west},
ylabel=$\tau_{X^*}(q)$, ylabel style={at=(current axis.above origin), anchor=south},
xtick={0,\gammaV,1.7},
xticklabels={$0$,$\gamma$,$\beta$},
xmin=0,
xmax=2.2,
ymajorticks=false
]
\addplot[line width=5pt,opacity=0.8,white!70!green,domain=0:\gammaV]{x/\gammaV};
\addplot[thick,white!20!blue,domain=0:\gammaV]{x/\gammaV} node [pos=0.6,left]{$\frac{1}{\gamma}q\ $};
\addplot[thick,white!20!red,domain=0:1.7]{(1-0.5/1.7)*x} node [pos=0.65,below right]{$\left(1-\frac{\alpha}{\beta}\right)q\, $};
\addplot[thick,white!20!red,domain=1.7:2.2]{x-0.5} node [pos=0.37,below right]{$q-\alpha$};
\addplot[dashed] coordinates {(1.7,0) (1.7,1.2)};
\addplot[dashed] coordinates {(0,1.2) (1.7,1.2)};
\addplot[thick,white!20!purple,domain=0:3]{(1-0.5/2)*x} node [pos=0.58,left]{$\left(1-\frac{\alpha}{2}\right)q\ $};
\addplot[dotted,line width=1.5pt] coordinates {(\gammaV,0) (\gammaV,1.7)};
\end{axis}
\end{tikzpicture}
}
\caption{$\alpha\in(0,1)$, $\gamma<\frac{2}{2-\alpha}<1+\alpha$ and $\beta>1+\alpha$}
\label{fig5c}
\end{subfigure}
\caption{The scaling functions of $X^*$ when $b\neq 0$: case (a) of Theorem \ref{thm:mainb!=0}. Each plot shows the scaling functions $\tau_{X_1^*}$ (blue), $\tau_{X_2^*}$ (red), $\tau_{X_3^*}$ (purple) and $\tau_{X^*}$ (thick green). Dashed part of the plot denotes the upper bound. The vertical thick dotted line denotes the position of $\gamma$, beyond which the moments of $X^*_1$ and $X^*$ are infinite. }\label{fig5}
\end{figure}

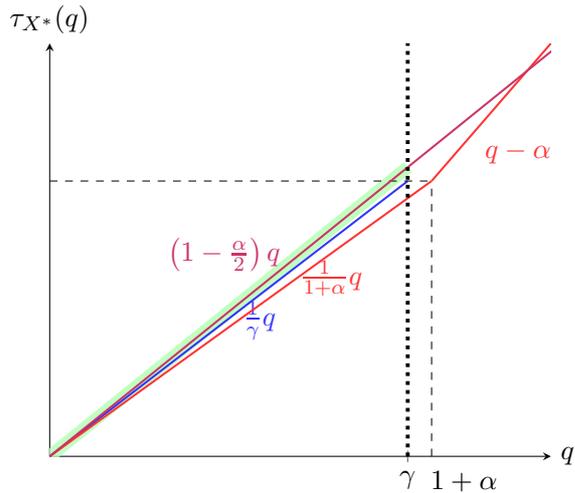
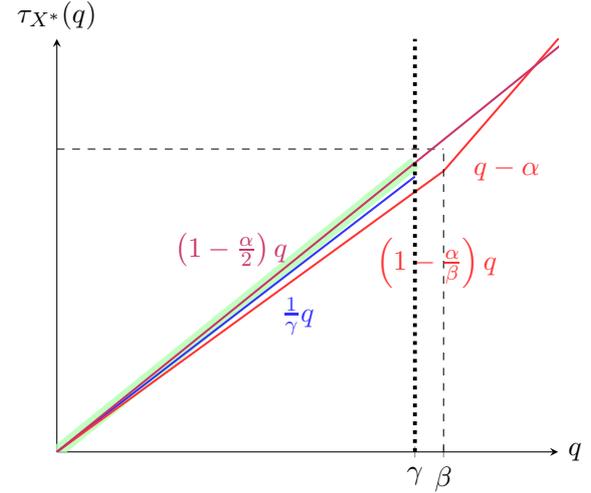
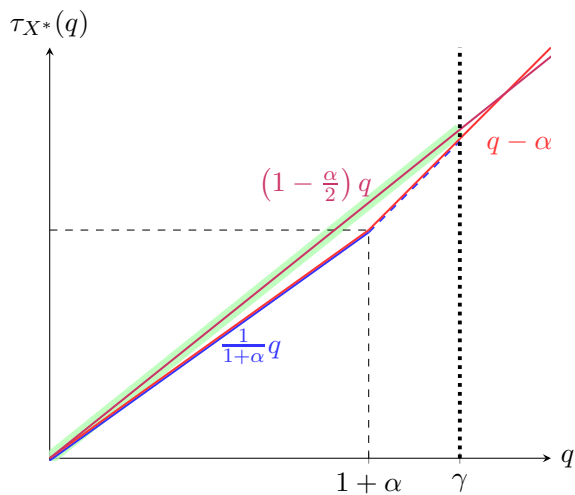
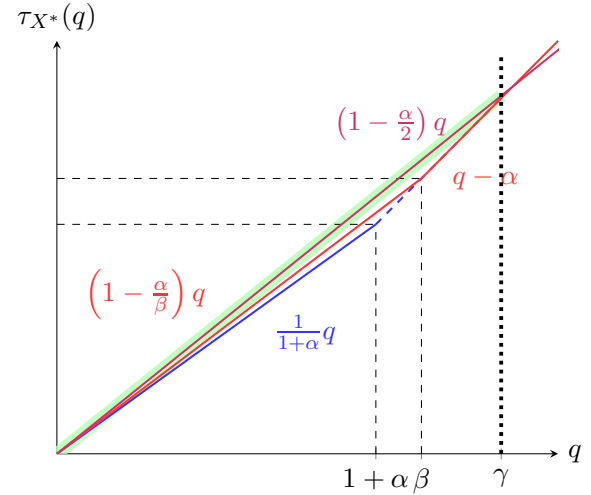
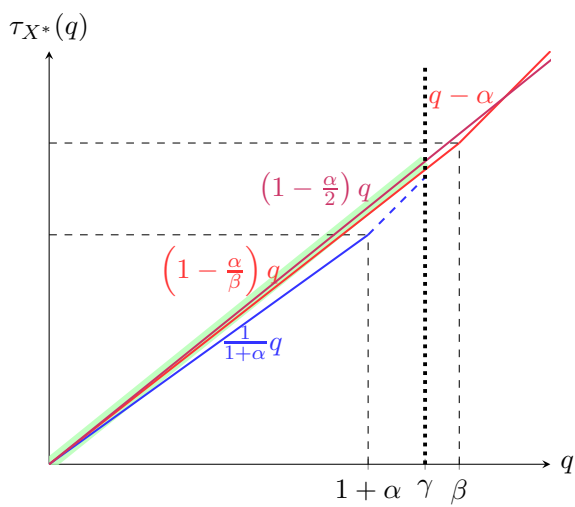
\begin{figure}
\centering
\begin{subfigure}[b]{0.45\textwidth}
\newcommand\gammaV{1.5}
\newcommand\alphaV{0.6}
\resizebox{1.1\textwidth}{!}{
\begin{tikzpicture}[domain=0:3]
\begin{axis}[
axis lines=middle,
xlabel=$q$, xlabel style={at=(current axis.right of origin), anchor=west},
ylabel=$\tau_{X^*}(q)$, ylabel style={at=(current axis.above origin), anchor=south},
xtick={0,\gammaV,1.5},
xticklabels={$0$,$\gamma$,$\qquad \qquad 1+\alpha$},
xmin=0,
xmax=2.1,
ymajorticks=false
]
\addplot[line width=5pt,opacity=0.8,white!70!green,domain=0:\gammaV]{(1-\alphaV/2)*x};
\addplot[thick,white!20!blue,domain=0:\gammaV]{x/\gammaV} node [pos=0.6,below]{$\frac{1}{\gamma}q\ $};
\addplot[thick,white!20!red,domain=0:1.6]{(1/1.6)*x} node [pos=0.75,below]{$\frac{1}{1+\alpha}q\ $};
\addplot[thick,white!20!red,domain=1.6:2.2]{x-0.6} node [pos=0.3,below right]{$q-\alpha$};
\addplot[dashed] coordinates {(1.6,0) (1.6,1)};
\addplot[dashed] coordinates {(0,1) (1.6,1)};
\addplot[thick,white!20!purple,domain=0:3]{(1-\alphaV/2)*x} node [pos=0.35,left]{$\left(1-\frac{\alpha}{2}\right)q\ $};
\addplot[dotted,line width=1.5pt] coordinates {(\gammaV,0) (\gammaV,1.5)};
\end{axis}
\end{tikzpicture}
}
\caption{$\alpha\in(0,1)$, $\frac{2}{2-\alpha}<\gamma<1+\alpha$ and $\beta<1+\alpha$}
\label{fig6a}
\end{subfigure}
\hfill
\begin{subfigure}[b]{0.45\textwidth}
\newcommand\gammaV{1.5}
\newcommand\alphaV{0.6}
\resizebox{1.1\textwidth}{!}{
\begin{tikzpicture}[domain=0:3]
\begin{axis}[
axis lines=middle,
xlabel=$q$, xlabel style={at=(current axis.right of origin), anchor=west},
ylabel=$\tau_{X^*}(q)$, ylabel style={at=(current axis.above origin), anchor=south},
xtick={0,\gammaV,1.62},
xticklabels={$0$,$\gamma$,$\beta$},
xmin=0,
xmax=2.1,
ymajorticks=false
]
\addplot[line width=5pt,opacity=0.8,white!70!green,domain=0:\gammaV]{(1-0.6/2)*x};
\addplot[thick,white!20!blue,domain=0:\gammaV]{x/\gammaV} node [pos=0.6,below right]{$\frac{1}{\gamma}q\ $};
\addplot[thick,white!20!red,domain=0:1.62]{(1-0.6/1.62)*x} node [pos=0.8,below right]{$\left(1-\frac{\alpha}{\beta}\right)q\, $};
\addplot[thick,white!20!red,domain=1.62:3]{x-0.6} node [pos=0.06,below right]{$q-\alpha$};
\addplot[dashed] coordinates {(1.62,0) (1.62,1.1)};
\addplot[dashed] coordinates {(0,1.1) (1.62,1.1)};
\addplot[thick,white!20!purple,domain=0:3]{(1-0.6/2)*x} node [pos=0.35,left]{$\left(1-\frac{\alpha}{2}\right)q\ $};
\addplot[dotted,line width=1.5pt] coordinates {(\gammaV,0) (\gammaV,1.5)};
\end{axis}
\end{tikzpicture}
}
\caption{$\alpha\in(0,1)$, $\frac{2}{2-\alpha}<\gamma<1+\alpha$ and $\beta>1+\alpha$}
\label{fig6b}
\end{subfigure}
\hfill
\begin{subfigure}[b]{0.45\textwidth}
\newcommand\gammaV{1.8}
\newcommand\alphaV{0.4}
\newcommand\alphaVP{1.4}
\resizebox{1.1\textwidth}{!}{
\begin{tikzpicture}[domain=0:3]
\begin{axis}[
axis lines=middle,
xlabel=$q$, xlabel style={at=(current axis.right of origin), anchor=west},
ylabel=$\tau_{X^*}(q)$, ylabel style={at=(current axis.above origin), anchor=south},
xtick={0,\gammaV,1.4},
xticklabels={$0$,$\gamma$,$1+\alpha$},
xmin=0,
xmax=2.2,
ymajorticks=false
]
\addplot[line width=5pt,opacity=0.8,white!70!green,domain=0:1.8]{(1-\alphaV/2)*x};
\addplot[thick,white!20!blue,domain=0:(1+\alphaV)]{(1/(1+\alphaV))*x-0.01} node [pos=0.5,right]{$\frac{1}{1+\alpha}q\ $};
\addplot[thick,white!20!blue,dashed,domain=(1+\alphaV):\gammaV]{x-\alphaV-0.01};
\addplot[thick,white!20!red,domain=0:1.4]{(1/1.4)*x};
\addplot[thick,white!20!red,domain=1.4:\gammaV]{x-0.4};
\addplot[thick,white!20!red,domain=\gammaV:2.2]{x-0.4} node [pos=0.18,below right]{$q-\alpha$};
\addplot[dashed] coordinates {(1.4,0) (1.4,1)};
\addplot[dashed] coordinates {(0,1) (1.4,1)};
\addplot[thick,white!20!purple,domain=0:3]{(1-\alphaV/2)*x} node [pos=0.5,left]{$\left(1-\frac{\alpha}{2}\right)q\ $};
\addplot[dotted,line width=1.5pt] coordinates {(\gammaV,0) (\gammaV,1.8)};
\end{axis}
\end{tikzpicture}
}
\caption{$\beta<1+\alpha<\gamma$ (implies $\alpha \in (0,1)$ and $\gamma > \frac{2}{2-\alpha}$)}
\label{fig6c}
\end{subfigure}
\hfill
\begin{subfigure}[b]{0.45\textwidth}
\newcommand\gammaV{1.95}
\newcommand\alphaV{0.4}
\newcommand\alphaVP{1.4}
\resizebox{1.1\textwidth}{!}{
\begin{tikzpicture}[domain=0:3]
\begin{axis}[
axis lines=middle,
xlabel=$q$, xlabel style={at=(current axis.right of origin), anchor=west},
ylabel=$\tau_{X^*}(q)$, ylabel style={at=(current axis.above origin), anchor=south},
xtick={0,\gammaV,1.4,1.6},
xticklabels={$0$,$\gamma$,$1+\alpha$,$\beta$},
xmin=0,
xmax=2.2,
ymajorticks=false
]
\addplot[line width=5pt,opacity=0.8,white!70!green,domain=0:1.95]{(1-\alphaV/2)*x};
\addplot[thick,white!20!blue,domain=0:(1+\alphaV)]{(1/(1+\alphaV))*x} node [pos=0.65,below right]{$\frac{1}{1+\alpha}q\ $};
\addplot[thick,white!20!blue,dashed,domain=(1+\alphaV):\gammaV]{x-\alphaV};
\addplot[dashed] coordinates {(1.4,0) (1.4,1)};
\addplot[dashed] coordinates {(0,1) (1.4,1)};
\addplot[thick,white!20!red,domain=0:1.6]{(1-0.4/1.6)*x} node [pos=0.45,above left]{$\left(1-\frac{\alpha}{\beta}\right)q\, $};
\addplot[thick,white!20!red,domain=1.6:2.2]{x-0.4} node [pos=0.15,below right]{$q-\alpha$};
\addplot[dashed] coordinates {(1.6,0) (1.6,1.2)};
\addplot[dashed] coordinates {(0,1.2) (1.6,1.2)};
\addplot[thick,white!20!purple,domain=0:3]{(1-\alphaV/2)*x} node [pos=0.6,left]{$\left(1-\frac{\alpha}{2}\right)q\ $};
\addplot[dotted,line width=1.5pt] coordinates {(\gammaV,0) (\gammaV,1.75)};
\end{axis}
\end{tikzpicture}
}
\caption{$1+\alpha<\beta\leq\gamma$ (implies $\alpha \in (0,1)$ and $\gamma > \frac{2}{2-\alpha}$)}
\label{fig6d}
\end{subfigure}
\hfill
\begin{subfigure}[b]{0.45\textwidth}
\newcommand\gammaV{1.65}
\newcommand\alphaV{0.4}
\newcommand\alphaVP{1.4}
\resizebox{1.1\textwidth}{!}{
\begin{tikzpicture}[domain=0:3]
\begin{axis}[
axis lines=middle,
xlabel=$q$, xlabel style={at=(current axis.right of origin), anchor=west},
ylabel=$\tau_{X^*}(q)$, ylabel style={at=(current axis.above origin), anchor=south},
xtick={0,\gammaV,1.4,1.8},
xticklabels={$0$,$\gamma$,$1+\alpha$,$\beta$},
xmin=0,
xmax=2.2,
ymajorticks=false
]
\addplot[line width=5pt,opacity=0.8,white!70!green,domain=0:1.65]{(1-\alphaV/2)*x};
\addplot[thick,white!20!blue,domain=0:(1+\alphaV)]{(1/(1+\alphaV))*x} node [pos=0.65,below]{$\frac{1}{1+\alpha}q\ $};
\addplot[thick,white!20!blue,dashed,domain=(1+\alphaV):\gammaV]{x-\alphaV};
\addplot[dashed] coordinates {(1.4,0) (1.4,1)};
\addplot[dashed] coordinates {(0,1) (1.4,1)};
\addplot[thick,white!20!red,domain=0:1.8]{(1-0.4/1.8)*x} node [pos=0.6,left]{$\left(1-\frac{\alpha}{\beta}\right)q\, $};
\addplot[thick,white!20!red,domain=1.8:2.2]{x-0.4} node [pos=0.5,left]{$q-\alpha$};
\addplot[dashed] coordinates {(1.8,0) (1.8,1.4)};
\addplot[dashed] coordinates {(0,1.4) (1.8,1.4)};
\addplot[thick,white!20!purple,domain=0:3]{(1-\alphaV/2)*x} node [pos=0.5,left]{$\left(1-\frac{\alpha}{2}\right)q\ $};
\addplot[dotted,line width=1.5pt] coordinates {(\gammaV,0) (\gammaV,1.75)};
\end{axis}
\end{tikzpicture}
}
\caption{$1+\alpha<\gamma<\beta$ (implies $\alpha \in (0,1)$ and $\gamma > \frac{2}{2-\alpha}$)}
\label{fig6e}
\end{subfigure}
\caption{The scaling functions of $X^*$ when $b\neq 0$: case (b) of Theorem \ref{thm:mainb!=0}. Each plot shows the scaling functions $\tau_{X_1^*}$ (blue), $\tau_{X_2^*}$ (red), $\tau_{X_3^*}$ (purple) and $\tau_{X^*}$ (thick green). Dashed part of the plot denotes the upper bound. The vertical thick dotted line denotes the position of $\gamma$, beyond which the moments of $X^*_1$ and $X^*$ are infinite.}\label{fig6}
\end{figure}

\section*{Acknowledgements}
Nikolai N.~Leonenko was supported in particular by Cardiff Incoming Visiting Fellowship Scheme, International Collaboration Seedcorn Fund, Australian Research Council's Discovery Projects funding scheme (project DP160101366) and the project MTM2015-71839-P of MINECO, Spain (co-funded with FEDER funds). Murad S.~Taqqu was supported in part by the Simons foundation grant 569118 at Boston University. Danijel Grahovac was partially supported by the University of Osijek Grant ZUP2018-31.

\bigskip
\bigskip

\bibliographystyle{agsm}
\bibliography{References}

\end{document}